\tikzstyle{process} = [rectangle, minimum width=3cm, minimum height=1cm, text centered, text width=3cm, draw=black, fill=green!30]
\tikzstyle{decision} = [diamond, minimum width=3cm, minimum height=1cm, text centered, text width = 3cm, draw=black, fill=orange!30]
\tikzstyle{arrow} = [thick,->,>=stealth]
\theoremstyle{definition}
\newtheorem*{Question}{Question}
\newtheorem*{Answer}{Answer}
\theoremstyle{plain}
\newtheorem{theorem}{Theorem}[section]
\newtheorem{lemma}[theorem]{Lemma}
\newtheorem{Proposition}[theorem]{Proposition}
\newtheorem{remark}[theorem]{Remark}
\newtheorem{Corollary}[theorem]{Corollary}
\newcommand{\bq}{\mathbb{Q}}
\newcommand{\qp}{\mathbb{Q}_p}
\newcommand{\zp}{\mathbb{Z}_p}
\newcommand{\norm}[1]{\vert #1 \vert}
\newcommand{\br}[1]{\overline{#1}}
\newcommand{\co}{\mathcal{O}}
\newcommand{\sL}{\mathcal{L}}
\newcommand{\brqp}{\br{\mathbb{Q}}_p}
\newcommand{\brFp}{\br{\mathbb{F}}_p}
\newcommand{\rmss}{{\mathrm{ss}}}
\newcommand{\bZ}{\mathbb{Z}}
\newcommand{\bQ}{\mathbb{Q}}
\newcommand{\Zp}{\bZ_p}
\newcommand{\Qp}{\bQ_p}
\newcommand{\bP}{\mathbb{P}}
\newcommand{\cL}{\mathcal{L}}
\newcommand{\IZind}{\mathrm{ind}_{IZ}^G\>}
\newcommand{\KZind}{\mathrm{ind}_{KZ}^G\>}
\newcommand{\IZKZind}{\mathrm{ind}_{IZ}^{KZ}\>}
\newcommand{\Oe}{\mathcal{O}_E}
\newcommand{\Fq}{\mathbb{F}_q}
\newcommand{\Fp}{\mathbb{F}_p}
\newcommand{\id}{\mathrm{id}}
\newcommand{\GL}{\mathrm{GL}}
\newcommand{\im}{\mathrm{Im}\>}
\newcommand{\Ker}{\mathrm{Ker}\>}
\newcommand{\SymE}[1]{\underline{\mathrm{Sym}}^{#1}E^2}
\newcommand{\SymO}[1]{\underline{\mathrm{Sym}}^{#1}\Oe^2}
\newcommand{\SymF}[1]{\mathrm{Sym}^{#1}\Fq^2}
\newcommand{\St}{\mathrm{St}}
\newcommand{\matUp}{\beta\begin{pmatrix}
1 & \lambda \\
0 & 1
\end{pmatrix}w}
\newcommand{\tWp}{\tilde{W}_p}
\newcommand{\tUp}{\tilde{U}_p}
\newcommand{\Bind}{\mathrm{ind}_B^G \>}
\newcommand{\C}[1]{\mathscr{C}^{#1}}
\newcommand{\Bnorm}[2]{\vert\vert #1 \vert\vert_{#2}}
\newcommand{\tB}{\tilde{B}}
\newcommand{\logL}{\log_{\cL}}
\newcommand{\sm}{\mathrm{smooth}}
\newcommand{\lattice}[1]{\tilde{\Theta}(#1)}
\newcommand{\latticeL}[1]{\tilde{\Theta}(#1, \cL)}
\newcommand{\ind}{\mathrm{ind}\>}
\newcommand{\sC}{\mathscr{C}}
\title{The semi-stable Local Langlands Correspondence}
\author{Eknath Ghate}
\affil{School of Mathematics, Tata Institute of Fundamental Research \\ Homi Bhabha Road, Mumbai - 400005, India.}
\begin{document}

\maketitle

\begin{abstract}
  We start with  background that goes into an Iwahori-theoretic reformulation of the mod $p$ Local Langlands Correspondence (\S 2).
  We then explain some classical $p$-adic functional analytic results (\S3) that go into defining the $p$-adic Banach space
  (\S 4) attached to a two-dimensional semi-stable representation $V_{k,\sL}$ of the Galois group of $\qp$ of weight $k$ and
  $\sL$-invariant $\sL$ under the $p$-adic Local Langlands correspondence. 
  We then sketch how to compute the reduction of a lattice in this Banach space, which along with the Iwahori mod $p$ LLC,
  allows one to  
  completely determine the mod $p$ reduction of $V_{k,\sL}$
  for  all weights $3 \leq k \leq p+1$ and all $\sL$ for $p \geq 5$ (\S 5). These notes are a summary of our joint
  work with Anand Chitrao \cite{CG23}.
  Emphasis is placed on motivation and background rather than completeness.
\end{abstract}  
  
\section{Introduction}

Let $p$ be a prime. It has been over 10 years now since I started working on mathematics connected with the $p$-adic Local
Langlands correspondence for $p$ a prime. The theory was initiated by Breuil about 20 years ago, and caused a mini-revolution
in the field of number theory. I remember him speaking about his vision at the ICM in Hyderabad in 2010 \cite{Breuil ICM}. 
A few years later Colmez made the
next quantum leap forward by making Breuil's correspondences functorial \cite{Col10b}. I was fortunate to have a ringside view of these
developments especially since some of them were made during the years 2007-2010 when we ran a joint Indo-French CEFIPRA project.
Meanwhile many other prominent mathematicians such as Berger, Dospinescu, Pa\u{s}k\={u}n{a}s, to name just a few,
contributed deep results along the way.

I entered the subject for the following reason. About 13 years ago, I was trying to show that certain mod $p$ Galois representations
attached to modular forms had large image. Indeed, I was trying to generalize Serre's famous conjecture 
that the global mod $p$ Galois representation attached to a non-CM rational elliptic curve has full image for all primes $p$ larger than
an absolute constant to the setting of modular forms. It turns out that the na\"ive generalization of this statement is necessarily false
but Pierre Parent and I could state a variant of this conjecture and make some mild progress on it for weight $2$ forms
\cite{GP12}.

One way to tackle this problem is to show that the corresponding restricted local mod $p$ Galois representation has large image.
Nothing like showing a group is large by showing that it has a large subgroup. This approach works to some extent but not entirely, not least because it turns out that the mod $p$ reductions of local Galois
representations attached to modular forms have not yet been written down in all cases. It was very disconcerting to me that there was such a glaring
gap in the subject. 
I decided to devote a good chunk of my future research time in making some
headway with this question.

It turns out  the $p$-adic Local Langlands program is ideally suited to studying the mod $p$ images of local
modular Galois representations.
Indeed, Breuil invented his theory to tackle exactly this problem. In the beginning he restricted to the so called crystalline case
where $p$ does not divide the level of the modular form. Let us from now on always restrict to odd primes $p$ (though in
some results in this Introduction we assume without warning that $p \geq 5$). Breuil showed that one could compute the local mod $p$ reductions for all
forms of weights $k \leq 2p+1$ and positive slope \cite{Bre03}, \cite{Bre03b}. Here the slope of a form is the $p$-adic valuation of its $p$-th Fourier
coefficient, where the valuation is normalized so that the valuation of $p$ is 1.
(The case of slope $0$ and all weights is classical and is due to Deligne.)
This generalized the work of his advisor Fontaine, although the details were worked out in \cite{Edi92},
who had earlier computed the reductions for weights $k \leq p+1$ and positive slope.

One obvious restriction
in these theorems is that the weight is bounded above, but there is no restriction on the slope. In an orthogonal direction, in a
short but influential paper, Buzzard and Gee \cite{BG09} used the $p$-adic Local Langlands correspondence to compute the mod $p$
reductions of crystalline Galois representations of slopes in the small range
$(0,1)$, but for all weights. Some
difficulties encountered at slope $1/2$ were only cleared up in a second paper \cite{BG13}. 
(The case of $p=2$ for slopes in this range is being treated in a forthcoming thesis of Arathy Venugopal.)
For historical completeness, let us 
mention that earlier and using a different method, Berger, Li and Zhu \cite{BLZ04}
had treated the case of slopes which are large compared to the weight, namely slopes which are larger
than $\lfloor \frac{k-2}{p-1} \rfloor$ (an interesting variant of this results was recently proved by Bergdall-Levin \cite{BL22}
who treated the
case of slope larger than $\lfloor \frac{k-1}{p} \rfloor$). 

This is where I entered the problem. In a series of papers with my coauthors (students, postdocs, colleagues), I first extended the result of
Buzzard-Gee to all slopes in $(0,2)$. This does not seem like much, but this marginal gain of a unit interval's worth of slopes
was to consume me for the better part of the first half of the
following decade. At first, the answers we got for the reduction seemed unpredictable, almost as if one was entering a fractal. There
were no general guidelines (other than a folklore conjecture of Breuil, Buzzard and Emerton which said that for fractional
slopes and even weights the reduction should be irreducible). Murphy's law (if things can go wrong, they will)
seemed to rule the roost - if a particular exception to a general rule for the shape of the reduction was in principle possible,
then it always wound up occurring.

Some initial headway for the case of slopes in $(1,2)$ was made with Abhik Ganguli for bounded weights \cite{GG15}, and then in a very nice paper with
Shalini Bhattacharya for all weights \cite{BG15} but again we were only able to partially treat the case of slope $3/2$. 
The missing case of slope $1$ was then
treated with Shalini Bhattacharya and Sandra Rozensztajn \cite{BGR18}. The complete picture for
slope $3/2$ was finally provided only a few years ago with Vivek Rai, though the paper \cite{GR19} appeared just this year.
(Beyond this range, I also wish to mention forthcoming work of Sudipta Majumder for slope $2$, some partial
results of Nagell-Pande and Arsovski for slopes in $(2,3)$, and a 
forthcoming project \cite{BGR25} with Shalini Bhattacharya and Ravitheja Vangala which aims to treat all fractional slopes 
in the range $(0,p)$ building on the foundations laid in \cite{GV22}.)
All these papers use the functoriality of
the $p$-adic Local Langlands Correspondence with respect to reduction (established by Berger if one is willing to work up to semi-simplification - as we mostly
were - and in general by Colmez), to reduce the question of studying the reductions of local crystalline Galois representations
to studying the reduction of the standard lattice in a certain $p$-adic Banach space. The slope 1 paper \cite{BGR18} also computes the reductions
of several other lattices, and in particular establishes criteria to distinguish between {\it peu} and {\it tr\`es ramifi\'ee}
cases.

Part of the problem encountered at the half-integral slopes $1/2$, $1$, $3/2, \ldots$ was that the reduction seemed to behave even more
erratically than usual at the so called exceptional weights $k$ (these are weights which are congruent to two more 
than twice the slope modulo $(p-1)$).
Based on the results in slope $1/2$ and $1$, and some cautionary computations of Rozensztajn, 
I eventually wound up making a conjecture
which I called the {\it zig-zag conjecture} which described the behaviour of the reduction for all positive half-integral
slopes less than or equal to $\frac{p-1}{2}$ and all sufficiently large exceptional weights. 
Roughly, the conjecture predicted that the reductions varied through an
alternating sequence of irreducible and reducible representations
depending on the (relative) sizes of two parameters. The statement appeared in a proceedings of an annual number theory conference
at RIMS in Kyoto, Japan \cite{Gha21}, where I was thrown to the wolves by my kind host Shinichi Kobayashi as the opening speaker (I thank him for this honor).
At the time it had become my  mission to settle the case of slope $3/2$ if only to prove that the conjecture
had some merit. However, over the years there was an uncomfortable truth that
began to emerge. The paper \cite{BG13} for slope $1/2$ was about 10 pages long, the one \cite{BGR18} for slope $1$ was about 50 pages long (though not all of it dealt with zig-zag), and the (unabridged arXiv version of the)
one \cite{GR19} for slope $3/2$ was just under 80 pages long (its entire focus was zig-zag at $3/2$). So clearly another approach would be required to prove the conjecture in general.
To my complete surprise this was to surface a few years later.

To explain this, let us return to Breuil's early foundational work. Apart from treating the crystalline case, Breuil also wrote some important papers in
the so called semi-stable case. This case occurs for modular forms for which $p$ exactly divides the level of the form (and does not divide the conductor
of the nebentypus character). In an important work with M\'ezard dating to more than 20 years ago, Breuil computed the mod $p$ reductions
of all semi-stable Galois representations of {\it even} weight $k \leq p-1$ using techniques from integral $p$-adic Hodge theory \cite{BM}. The case of {\it odd} weights
in this range was completed much later (about 6 years ago) in another {\it tour de force} by Guerberoff and Park \cite{GP} (and Lee and Park \cite{LP22}), though several constants
remained to be determined completely. For completeness, we also mention that an alternate algebro-geometric approach to computing the reduction involving the global sections of certain
bundles on the $p$-adic upper half-plane was developed by Breuil-M\'ezard in some cases \cite{BM10}, though this approach does indeed seem to require that the weight $k$ be even
since it involved $k/2$-powers of certain line bundles.

In any case, some time in 2022, I realized that all of these works in the parallel universe of semi-stable representations could be used to give a proof
of the zig-zag conjecture in the crystalline world, at least for most slopes and on the inertia group. This was perhaps one of the more important
observations that I have made in the past 10 years. Let us explain how it came about. A bit earlier than this, Anand Chitrao, Seidai Yasuda and I had been trying to use the above
mentioned works in the crystalline world to try and deduce results about $V_{k,\sL}$ in the semi-stable world, using a limiting argument in Colmez's blow-up space of
non-split rank $2$ trianguline $(\varphi,\Gamma)$-modules. We wrote a nice paper about this which appeared this year \cite{CGY21} and
which allowed one, for instance, to predict the exact shape of 
some of the above mentioned missing constants for small odd weights (e.g., for $k=5$ from the slope ${3}/{2}$ paper),
and, in general, to recover the work of
Breuil-M\'ezard and Guerberoff-Park on inertia {\it assuming} my zig-zag conjecture. The breakthrough came when I realized that one could reverse the
entire argument and instead deduce information about crystalline representations - in particular, a large portion of the zig-zag phenomenon -
from the literature in the semi-stable case. In fact, after this realization I could
immediately prove zig-zag up to slope $\frac{p-3}{2}$
(though in the first instance I could only prove it on the inertia group since, as already mentioned, the constants in the semi-stable world had not yet been
completely determined for odd weights). The missing cases of slope $\frac{p-2}{2}$ and $\frac{p-1}{2}$
would require extending the work of Guerberoff-Park \cite{GP} to the odd weight $k=p$ and the classical work of Breuil-M\'ezard \cite{BM} to the case of the even weight $k = p+1$.

The possible extension to these two weights was more than just a technicality. There was a theoretical obstruction. It turned out that the strongly
divisible modules occurring in integral $p$-adic Hodge theory were either not as well behaved ($k = p$,  see
\cite{Gao17}) or not even available (for $k \geq p+1$)
(although since then a theory of Breuil-Kisin modules has become available which works for all weights $k$). So an entirely new perspective was required.
Based on my experience with computing the reduction using the functoriality of the $p$-adic Local Langlands Correspondences in the crystalline world (a method initiated by Breuil and
Buzzard-Gee), I wondered whether Anand Chitrao and I might
be able to tackle the reduction problem in the semi-stable case in a similar manner. We were given to understand
that one
might have to wait for a very long time for this hope to be realized.

However, not ones to shy away from a challenge, Chitrao and I started work on this ambitious project. Initial gains were few and
far between. But, to make a long story short, we were eventually able to
compute the mod $p$ reductions of all semi-stable representations for weights $k \leq p+1$, including the cases of weight $k = p$
and $k = p+1$. We were also able to provide a complete and uniform treatment of all the constants involved.
The goal of this expository paper is to
explain this result and to expand on some of the mathematical background that goes into its proof.
But before I go further, let me record that this work
allowed one to complete the proof of my zig-zag conjecture (i.e., to extend the initial proof up to slope $\frac{p-1}{2}$,
and to determine all the constants that occur in the unramified characters on the decomposition group in the reduction, see \cite{Gha22}).

Already, in the early days, Breuil had written two important papers describing the Banach space attached to
a semi-stable representation $V_{k,\sL}$ of weight $k$ and $\sL$-invariant $\sL$ under the $p$-adic Local Langlands Correspondence. The first description, denoted by
$B(k,\sL)$ in \cite{Bre04},
involved some work of Schneider and Teitelbaum and used Morita duality
and seemed a bit abstract to us. But the second description, denoted by $\tB(k, \cL)$ in \cite{Bre10}, `only' used $p$-adic
functional analysis (a beautiful summary of which can be found in \cite{Col10a}, see $\S 3$) and this definition
seemed much more amenable to computation.
In this model, the Banach space $\tB(k, \cL)$ was nothing but the space of differentiable functions
on $\qp$ of order $r/2$ where $r = k-2$ (more precisely of type $\sC^{r/2}$, these notions are slightly different),
with a similar differentiability condition at $\infty$, modulo polynomial functions of degree at most $r$ and certain finite sums of  polynomial
times logarithmic ({poly$\cdot$log!}) functions (with the polynomial part having degree less than $r/2$). This description was something that you
could explain to a clever high school student learning calculus. Using it, we began to search for an integral structure (lattice)
on this Banach space.

It was not immediately clear how to proceed, but we soon realized that one could uniformize this
Banach space (much as in the crystalline world) by the compactly induced space $\mathrm{ind}_{IZ}^G \mathrm{Sym}^{k-2} E^2$ of
certain rational polynomial valued functions allowing us to define the (standard) lattice
in the Banach space as the (closure of the) image of the space $\mathrm{ind}_{IZ}^G \mathrm{Sym}^{k-2}{\mathcal O}_E^2$ of
integral polynomial valued functions under this uniformization. Here 
$I$ is the Iwahori subgroup of $G = \GL_2(\qp)$ and $Z$ is the center of $G$. It then `remained' to compute the reduction of this
lattice. In hindsight, this required several new creative computations. After much effort we were able to compute all
the Jordan-H\"older (JH) factors in the reduction
in terms of certain mod $p$ compactly induced spaces modulo the images of certain Iwahori-Hecke operators. Applying a
reformulation of Breuil's mod $p$ Local Langlands correspondence worked out by Chitrao, which is 
called the Iwahori mod $p$ Local Langlands correspondence in \cite{Chi23},
we could then return to the Galois side and compute the reductions of the semi-stable Galois representations $V_{k,\sL}$.
Our target to treat all weights in the range $3 \leq k \leq p+1$ was achieved in the paper \cite{CG23}, though we remark that
in principle our method can be used to treat  all weights $k \geq 3$ at least in principle (unlike the initial approach with strongly divisible modules).
Indeed, Anand and I have just begun to see whether our approach to computing the reduction of semi-stable representations
using $p$-adic Langlands can be used to recover very recent work of Bergdall, Levin, and Liu \cite{BLL22}
which uses Breuil-Kisin modules
to study the reduction of $V_{k,\sL}$ with $\sL$-invariant having very negative $p$-adic valuation.

Let us state the main result in the paper \cite{CG23} with Anand Chitrao. 
Let $p \geq 5$ be a prime and $E$ be a finite extension of $\qp$ containing $\sqrt{p}$.
We describe completely the semi-simplification of the reduction mod $p$ of
the irreducible two-dimensional semi-stable representation $V_{k, \cL}$ of $\mathrm{Gal}(\brqp/\qp)$
over $E$ with Hodge-Tate weights $(0,k-1)$ for $k \in [3, p + 1]$ and 
$\cL$-invariant $\cL \in E$. To do this
we need a little bit more notation. Let as usual $r = k-2$ so that $1 \leq r \leq p-1$.
    Let $v_-$ and $v_+$ be
    the largest and smallest integers, respectively, such that $v_- < r/2 < v_+$ for $r \geq 1$.
    For $n \geq 1$, let $H_n = \sum_{i = 1}^{n}\frac{1}{i}$ be the $n$-th partial harmonic sum.
    Set $H_0 = 0$ and $H_{\pm} = H_{v_{\pm}}$. Let $v_p$ be the $p$-adic valuation on $\brqp$
    normalized such that $v_p(p) = 1$.
    Let $$\nu = v_p(\sL - H_{-} - H_{+})$$
    be the $p$-adic valuation of $\sL$ shifted by the
    partial harmonic sums $H_{-}$ and $H_{+}$. 
    Everything hinges on the size of the parameter $\nu$. Let $\omega$ be the fundamental character of
    $\mathrm{Gal}(\br{{\mathbb Q}}_p/\qp)$ of
    level $1$. 
    Similarly, for an integer $c$ (with
    $p + 1 \nmid c$), let $\omega_2^c$ be an extension from the inertia subgroup $I_{\qp}$ to $\mathrm{Gal}(\brqp/\bq_{p^2})$
    of the $c$-th power of the
    fundamental character $\omega_2$ 
    of level $2$ chosen 
    so that the (irreducible) representation $\ind (\omega_2^c)$ obtained
    by inducing this character from $\mathrm{Gal}(\brqp/\bq_{p^2})$ to $\mathrm{Gal}(\brqp/\qp)$ has
    determinant $\omega^c$. Let $\mu_{\lambda}$ be the unramified character of $\mathrm{Gal}(\brqp/\qp)$ 
    sending geometric Frobenius to $\lambda \in \br{\mathbb{F}}_p^{*}$. 
    Our main theorem is the following result.

    \begin{theorem}[Chitrao-Ghate \cite{CG23}]\label{Main theorem in the second part of my thesis}
      For $k \in [3, p + 1]$ and for primes $p \geq 5$, the semi-simplification of the reduction mod $p$ of the
      semi-stable representation $V_{k, \cL}$ on $\mathrm{Gal}(\brqp/\qp)$ is given by an alternating
      sequence of irreducible and reducible representations:
    \[
    \br{V}_{k, \cL} \sim
    \begin{cases}
        \ind (\omega_2^{r+1 + (i-1)(p-1)}), & \text{ if $(i-1) - r/2 < \nu < i - r/2$} \\
        \mu_{\lambda_i}\omega^{r+1-i} \oplus \mu_{\lambda_i^{-1}}\omega^{i}, & \text{ if $\nu = i - r/2$},
    \end{cases}
\]
where 
  \begin{eqnarray*}
    \begin{cases} 
      1 \leq i \leq (r+1)/2 & \text{if $r$ is odd} \\
                          1 \leq i \leq (r+2)/2 & \text{if $r$ is even,}
    \end{cases}
 \end{eqnarray*}
  and the mod $p$ constants $\lambda_i$ are determined by the formulas:
    \begin{eqnarray*}
        \lambda_i & = & \br{(-1)^i \> i {r+1-i \choose i}\frac{(\cL - H_{-} - H_{+})}{p^{i-r/2}}}, \quad \text{ if } 1 \leq i < \dfrac{r + 1}{2} \\
        \lambda_{i} + \lambda_i^{-1} & = & \br{(-1)^i \> i{r + 1 - i \choose i}\frac{(\cL - H_{-} - H_{+})}{p^{i-r/2}}}, \quad \text{ if } i = \dfrac{r + 1}{2} \text{ and } r \text{ is odd}.
    \end{eqnarray*}
\end{theorem}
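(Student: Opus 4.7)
The plan is to exploit the $p$-adic Local Langlands correspondence in reverse: transport the problem of computing $\br{V}_{k,\cL}$ from the Galois side to the automorphic side, where Breuil's second model $\tB(k,\cL)$ of the $p$-adic Banach space attached to $V_{k,\cL}$ admits an explicit integral lattice whose mod $p$ reduction can be identified with a concrete Iwahori-Hecke cokernel. Concretely, I would begin by realising $\tB(k,\cL)$ as $\sC^{r/2}$-type functions on $\qp$ (with the appropriate condition at $\infty$) modulo polynomials of degree $\leq r$ and the specific finite-dimensional space of poly$\cdot$log functions dictated by $\cL$, then construct a $G$-equivariant surjection $\IZind \SymE{r} \twoheadrightarrow \tB(k,\cL)$ using the natural action of $G = \GL_2(\qp)$ on functions, and finally take the standard lattice $\lattice{k,\cL}$ to be the closure of the image of $\IZind \SymO{r}$.

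The heart of the proof is to determine $\lattice{k,\cL}/p\lattice{k,\cL}$ up to semi-simplification. I would show that, after an appropriate normalisation by $p^{i-r/2}$ chosen so that the relevant basis functions remain integral, the reduction is a quotient of $\IZind \SymF{r}$ by the image of a single Iwahori-Hecke operator $\tUp$. The eigenvalue of $\tUp$ on the $i$-th basis vector arises from the Taylor expansions of the poly$\cdot$log generators of the relations in $\tB(k,\cL)$ around $0$ and $\infty$: differentiation of the log factor produces the partial harmonic sums $H_{\pm}$, while the binomial coefficients $\binom{r+1-i}{i}$ bookkeep the polynomial part. This yields
\[
\lambda_i \;\equiv\; (-1)^i \, i \, \binom{r+1-i}{i} \, \frac{\cL - H_{-} - H_{+}}{p^{i-r/2}} \pmod p,
\]
whose $p$-adic behaviour is controlled entirely by $\nu = v_p(\cL - H_{-} - H_{+})$.

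With the Jordan--H\"older factors of the reduced lattice identified as Iwahori-Hecke cokernels, the final step is to invoke the Iwahori mod $p$ Local Langlands correspondence of \cite{Chi23}. When $\nu = i - r/2$ the normalised eigenvalue is a well-defined element of $\brFp^*$ and the cokernel dictionary produces the principal series $\mu_{\lambda_i}\omega^{r+1-i} \oplus \mu_{\lambda_i^{-1}}\omega^i$; in the symmetric middle case $i = (r+1)/2$ (with $r$ odd) the direct computation produces $\lambda_i + \lambda_i^{-1}$ rather than $\lambda_i$ itself, reflecting the $\lambda_i \leftrightarrow \lambda_i^{-1}$ ambiguity inherent in this reducible summand. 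When $(i-1) - r/2 < \nu < i - r/2$ the operator $\tUp$ acts invertibly on the relevant supersingular module and the output is the irreducible induced representation $\ind(\omega_2^{r+1+(i-1)(p-1)})$. The alternating pattern then falls out automatically as $\nu$ crosses successive threshold values.

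The principal obstacle is the computation of the reduced lattice: isolating $\tUp$ as the single operator controlling the reduction and making the calculation work uniformly for all weights in $[3, p+1]$, including the extremal weights $k = p$ and $k = p+1$ where integral $p$-adic Hodge theory is either ill-behaved or unavailable. Here one needs careful bookkeeping of $p$-adic cancellations in the poly$\cdot$log Taylor expansions to obtain the clean binomial formula above, and the appearance of the \emph{symmetric} sum $H_- + H_+$ rather than either $H_-$ or $H_+$ alone is a consequence of matching the differentiability conditions at $0$ and $\infty$ simultaneously. Once this step is in place, the passage through the Iwahori mod $p$ LLC is essentially formal and yields the full alternating theorem at once.
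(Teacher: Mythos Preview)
Your high-level roadmap (uniformise $\tB(k,\cL)$ by $\IZind\SymE{r}$, take the image of $\IZind\SymO{r}$ as lattice, compute its reduction, feed the answer into the Iwahori mod $p$ LLC) matches the paper's, but the middle step---``the reduction is a quotient of $\IZind\SymF{r}$ by the image of a single Iwahori-Hecke operator $\tUp$ whose eigenvalue is read off from Taylor expansions of the poly$\cdot$log generators''---is not how the computation actually works and, as stated, would not go through.

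The paper does not find a single operator. Instead it filters $\SymF{r}$ as an $IZ$-module by the characters $a^l d^{r-l}$ and studies each induced subquotient $F_{2l,2l+1}$ of $\br{\latticeL{k}}$ separately. For each $l$ one constructs a \emph{new} poly$\cdot$log function $g(z)=p^x\sum_i\lambda_i(z-z_i)^n\logL(z-z_i)$ (the $z_i$ and $\lambda_i$ solving a Vandermonde-type system, not the generators of $L(k,\cL)$), shows $g=0$ in $\tB(k,\cL)$, approximates $g\mathbbm{1}_{\Zp}$ by its Taylor truncations $g_h$, and then proves a telescoping lemma $g_h\equiv g_2\bmod \pi\latticeL{k}$. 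Only after unwinding $g_2$ does one recognise it as the image of $(T_{1,2}-\lambda_i)$ or $(\lambda_i T_{-1,0}-1)$ applied to a generator of $\IZind a^l d^{r-l}$; two different Hecke operators appear, and in the boundary cases $(i,r)=(1,p-1)$ or around the last marked point the non-commutative operator $T_{1,0}$ enters as well. The harmonic sums $H_\pm$ do not arise from matching conditions at $0$ and $\infty$ but from the derivative formula $\frac{d^j}{dz^j}(z^n\logL z)=\frac{n!}{(n-j)!}[z^{n-j}\logL z+(H_n-H_{n-j})z^{n-j}]$ inside the $g_h$ computation.

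Finally, you have not addressed the two cases the paper singles out as genuinely harder: the self-dual point $\nu=\tfrac12$ for odd $r$ (where the relation is \emph{quadratic} in $T_{-1,0}$, producing $\lambda_i+\lambda_i^{-1}$ rather than $\lambda_i$, and one must descend only to $g_3$) and the point $\nu=0$ for even $r$ (where the Iwahori-Hecke algebra is non-commutative). These are not bookkeeping details; they occupy separate sections of \cite{CG23} and your proposal gives no mechanism for handling them.
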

\noindent We remark that we have adopted 
  the following conventions in the statement of the theorem:
   \begin{itemize}
    \item The first interval $- r/2 < \nu < 1 - r/2$ is interpreted as $\nu < 1 - r/2$.
    \item If $r$ is odd, then the last case $\nu = 1/2$ should be interpreted as $\nu \geq 1/2$. If $r$ is even,
          then the interval $0 < \nu < 1$ should be interpreted as $\nu > 0$ and we drop the case $\nu = 1$.
    \end{itemize}
In any case, the theorem says that the reduction $\br{V}_{k,\sL}$ varies through an alternating sequence of 
irreducible and reducible mod $p$ 
representations as $\nu$ varies through finitely many marked points. 

\subsection{Idea of proof of Theorem~\ref{Main theorem in the second part of my thesis}}
A picture is worth a thousand words, and so we draw one to explain the proof. 
Let $B_{k,\sL} = \tilde{B}(k,\sL)$.
The following diagram commutes:

 \[
\begin{tikzcd}
  V_{k,\sL} \arrow[rrr, mapsto, "p-\text{adic LLC}"]  \arrow[dd, mapsto] & & & B_{k,\sL} \arrow[dd, mapsto] \\
   & &\\
  \br{V}_{k,\sL} \arrow[rrr, mapsto, "\text{Iwahori mod $p$ LLC}"]  &  & & \br{B}_{k,\sL}
\end{tikzcd}
\]
where the vertical maps are (semi-simplifications of) reductions of lattices in the corresponding spaces in the
top row.
One is trying to compute the left vertical map. But one computes instead the  right vertical
map since the bottom map is injective.

We now give a broad outline of the remaining text in terms of this picture. 
The paper is broken into four further sections.
The bottom map is explained in $\S 2$. The top right corner is explained in $\S 4$ based on 
the foundational material described in $\S 3$. The computation of the right vertical map is then
explained in $\S 5$.

\subsection{Notation}
\begin{itemize}
        \item $p \geq 5$ is a prime.
        \item $E$ is a finite extension of $\qp$ containing $\sqrt{p}$ and $\cL$.
              $\co_E$ is the ring of integers in $E$ with a uniformizer $\pi = \pi_E$ and residue field
$\Fq$. Note $\sqrt{p} \equiv 0 \mod \pi$. 
        \item $k$ denotes the weight of a semi-stable
representation and $r = k - 2$.
        \item $v_-$, $v_+$ are the largest, smallest integers,
respectively, such that $v_- < r/2 < v_+$ for $r \geq 1$.
        \item For $n \geq 1$, $H_n = \sum\limits_{i = 1}^{n}\frac{1}{i}$
and $H_0 = 0, H_{\pm} = H_{v_{\pm}}$.
        \item $v_p$ is the $p$-adic valuation normalized such that $v_p(p)
= 1$.
        \item $\nu = v_p(\sL - H_{-} - H_{+})$ is the valuation
of $\sL$ shifted by the partial
harmonic sums $H_{-}$ and $H_{+}$. 
        \item $I_{\qp}$ is the inertia subgroup of $\mathrm{Gal}(\brqp/\qp)$.
        \item $\omega$ is the fundamental character of $I_{\qp}$ of level
$1$;
        it has a canonical extension to $\mathrm{Gal}(\br{{\mathbb
Q}}_p/\qp)$.
        \item $\omega_2$ is the fundamental character of $I_{\qp}$ of
level $2$; for an integer $c$ with $p
+ 1 \nmid c$,
        choose an extension of $\omega_2^c$ to
$\mathrm{Gal}(\brqp/\bq_{p^2})$ so that the irreducible representation
        $\mathrm{ind}(\omega_2^c)$ obtained
        by inducing this extension from $\mathrm{Gal}(\brqp/\bq_{p^2})$ to
$\mathrm{Gal}(\brqp/\qp)$ has determinant $\omega^c$.
        \item $G$ is the group $\mathrm{GL}_2(\qp)$.
        \item $K$ is the maximal compact subgroup $\mathrm{GL}_2(\zp)$ of $G$.
        \item $I$ is the Iwahori subgroup of $G$ consisting of matrices in $K$ 
that are upper triangular $\!\!\!\!\mod p$.
        \item $B$ is the Borel subgroup of $G$ consisting of upper
triangular matrices.
        \item $\alpha = \begin{pmatrix}1 & 0 \\ 0 & p\end{pmatrix}$,
$\beta = \begin{pmatrix}0 & 1 \\ p & 0\end{pmatrix}$ and $w =
\begin{pmatrix}0 & 1 \\ 1 & 0\end{pmatrix}$. Note that $\beta =
\alpha w$.
        \item $I_n = \{[a_0] + [a_1]p + \cdots + [a_{n - 1}]p^{n - 1}
\> \vert \> a_i \in \Fp\}$ for $n \geq 1$, where $[\quad]$ denotes Teichm\"uller representative. $I_0 = \{0\}$.
        \item $V_{r} = \SymF{r}$ and 
        $\SymE{k - 2} := \norm{\det}^{\frac{k - 2}{2}} \otimes
\mathrm{Sym}^{k - 2}E^2$ for $k \geq 2$.
        \item $d^r$ for an integer $r$ denotes the character $IZ \to
\Fp^*$ which sends $\begin{pmatrix}a & b \\ c & d\end{pmatrix} \in
I$ to $d^r \!\!\! \mod p$ and which is trivial on the scalar
matrix $p$.
        \item For a representation $(\rho, V)$ of $IZ$ over $E$ or $\Fq$,
let $\IZind \rho$ be the vector space of functions $f : G \to V$
that are compactly supported modulo $IZ$ such that $f(hg) =
\rho(h)
        \cdot f(g)$, for all $h \in IZ$ and $g \in G$. The vector space
$\IZind \rho$ has a $G$ action:
        $g \cdot f(g') = f(g' g)$, for all $g, g' \in G$ and $f \in \IZind
\rho$. For $g \in G$ and $v \in V$, define the function $\llbracket g, v \rrbracket
\in \IZind \rho$ by
    \[
        \llbracket g, v\rrbracket(g') =
        \begin{cases}
            \rho(g'g) \cdot v, & \text{ if }g'g \in IZ \\
            0, & \text{ otherwise}.
        \end{cases}
    \]
        \item Let $(\Bind E)^{\sm}$ be the $E$-vector space of locally
constant functions from $G$ to $E$, with the action of $G$ given
by $g \cdot f(g') = f(g'g)$ for any $g, g' \in G$ and $f \in
(\Bind E)^{\sm}$.
        \item Let $\St$ be the Steinberg representation of $G$ over $E$,
i.e., $\St$ is the vector space of all locally constant functions
$H : \bP^1(\Qp) \to E$ modulo constant functions with the action
of $G$ given by $\left(\begin{pmatrix} a & b \\ c & d
\end{pmatrix}\cdot H\right)(z) = H\left(\dfrac{az + c}{bz +
d}\right)$.
        \item $[a] \in \{0, \ldots, p - 2\}$ denotes the class of $a \!\!
\mod p - 1$.
        \item $\delta_{a, b} = 1$ if $a = b$ and is $0$ otherwise.
\end{itemize}

    \section{Iwahori  mod $p$ Local Langlands Correspondence}
    \label{section Iwahori mod p  LLC}

  
    \subsection{Bruhat-Tits tree}
    \label{BT tree section}
  
    We start with the definition of the famous Bruhat-Tits tree.

    A lattice $L \subset \Qp^2$ is a
    free $\zp$-module of rank $2$. Two lattices $L$ and $L'$ are said to be homothetic if there is a non-zero
    scalar $z \in \qp^*$ such that $L' = z L$. The vertices of the tree are
    homothety classes $[L]$ of lattices $L \subset \Qp^2$.

    Two vertices represented by lattices $L$ and $L'$
    are joined by an edge 
    if $pL \subsetneq L' \subsetneq L$ (or equivalently $[L:L'] = p$). Note that this condition is symmetric in $[L]$ and $[L']$
    since $p(\frac{1}{p}L') \subsetneq L \subsetneq \frac{1}{p}L'$ and $\frac{1}{p}L'$ is homothetic to $L'$.
    The tree is a regular graph of valency $p+1$ since working mod $p$ there are $p+1$ subgroups of index $p$ in $\Zp^2/p\Zp^2 =
    ({\mathbb Z}/p{\mathbb Z})^2$.

    An oriented edge is an edge with a direction. The orientation can be specified by ordering the tuple
    $([L],[L'])$ so that $[L]$ is the origin or source of the edge and $[L']$ is the target or sink of the
    edge.
        
    \begin{lemma}
      \label{vertices-edges}
      Let $G = \mathrm{GL}_2(\qp) \supset K = \mathrm{GL}_2(\zp) \supset
           I = \{\left(\begin{smallmatrix} * & * \\ 0 & * \end{smallmatrix} \right) \mod p\} = $ the Iwahori subgroup.
           \begin{enumerate}
           \item The vertices of the tree are in  one-to-one correspondence with $G/KZ$.
           \item The oriented edges of the tree are in one-to-one correspondence with $G/IZ$.
           \end{enumerate}
    \end{lemma}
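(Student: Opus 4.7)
The plan is to identify explicit base points whose $G$-stabilizers are $KZ$ and $IZ$ respectively, and then verify that the action of $G$ on vertices and on oriented edges is transitive. Orbit--stabilizer will then yield both bijections simultaneously.

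For (1), I would take the standard lattice $L_0 = \zp^2$ as base point, with standard basis $e_1,e_2$. The first step is to compute $\mathrm{Stab}_G([L_0]) = KZ$: if $gL_0 = zL_0$ for some $z \in \qp^*$, then $z^{-1}g$ sends the standard basis to another $\zp$-basis of $L_0$ and so lies in $K$, giving $g \in KZ$; the reverse inclusion is immediate. For transitivity of the $G$-action on homothety classes, given any lattice $L \subset \qp^2$ one chooses a $\zp$-basis and takes $g$ to be the change-of-basis matrix, so that $gL_0 = L$. This yields the bijection $G/KZ \xrightarrow{\sim} \{\text{vertices}\}$, $g \mapsto [gL_0]$.

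For (2), I would take the standard oriented edge $([L_0],[L_1])$, where $L_1 = \zp e_1 \oplus p\zp e_2$, so that $pL_0 \subsetneq L_1 \subsetneq L_0$. The key step is to identify its stabilizer: any stabilizing $g$ already lies in $KZ$ by (1), and writing $g = zk$ with $k \in K$, the remaining condition $k[L_1] = [L_1]$ translates, upon reducing modulo $p$, to the requirement that $\br{k} \in \GL_2(\Fp)$ preserve the line $\br{L_1} = \Fp \, e_1 \subset L_0/pL_0 = \Fp^2$, i.e.\ that $\br{k}$ be upper triangular. This is precisely the Iwahori condition $k \in I$, so the stabilizer is $IZ$. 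For transitivity on oriented edges, given any $([L],[L'])$ I would first use (1) to find $g \in G$ with $gL_0 = L$; then $g^{-1}L'$ is a sublattice of $L_0$ of index $p$, hence corresponds to one of the $p+1$ points of $\bP^1(\Fp)$, and since $K$ acts on these points through the surjection $K \twoheadrightarrow \GL_2(\Fp)$ transitively, some $k \in K$ sends $L_1$ to $g^{-1}L'$. The element $gk$ then carries the standard oriented edge to $([L],[L'])$.

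The main obstacle is essentially organisational rather than mathematical: one must carefully distinguish the homothety class $[L_0]$ from the lattice $L_0$ itself when computing stabilizers (this is exactly where the centre $Z$ enters), and must preserve the orientation $L_1 \subsetneq L_0$ rather than the reverse when analysing edges (otherwise one recovers the stabilizer of the opposite oriented edge, which is a different conjugate of $IZ$). Once these conventions are fixed, the argument reduces to elementary linear algebra over $\zp$ and the transitivity of $\GL_2(\Fp)$ on $\bP^1(\Fp)$, and no deeper input is needed.
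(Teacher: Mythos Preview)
Your proposal is correct and follows the same overall orbit--stabilizer strategy as the paper: fix a base vertex $[L_0]$ and a base oriented edge, compute their stabilizers, and verify transitivity. The implementation details differ slightly in part (2). For the stabilizer, the paper argues algebraically via the identity $I = K \cap \alpha K \alpha^{-1}$ (with $\alpha = \left(\begin{smallmatrix}1&0\\0&p\end{smallmatrix}\right)$), showing that a stabilizing $g$ lies in $KZ \cap \alpha KZ \alpha^{-1} = IZ$; you instead reduce modulo $p$ and read off the upper-triangular condition from the action on the line $\Fp e_1 \subset L_0/pL_0$. For transitivity on oriented edges, the paper invokes the elementary divisor theorem (Smith normal form) to produce a basis of $L$ adapted to $L'$; you instead move $L$ to $L_0$ first and then use the transitivity of $\GL_2(\Fp)$ on $\bP^1(\Fp)$ to align the index-$p$ sublattice. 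Both pairs of arguments are standard and equivalent; your mod~$p$ viewpoint is perhaps slightly more geometric, while the paper's is more hands-on with matrices. One small point you glide over: when you write ``$k[L_1]=[L_1]$ translates to $\br{k}$ preserving $\Fp e_1$'', you are implicitly using that $k \in K$ forces the homothety scalar to be a unit (else the index in $L_0$ would change), so that in fact $kL_1 = L_1$ on the nose; this is true but worth stating.
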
            
    \begin{proof}    
        We claim that $G$ acts transitively on the vertices of the tree via
        $$ g \cdot [L] = [gL]$$
        for $g \in G$ and $L \subset \Qp^2$ a lattice.
        Note that  $gL$ is indeed a lattice in $\Qp^2$.
          Also if $L'$ is homothetic to $L$, then clearly $gL'$ is homothetic to $gL$. So the
          action is well defined. The transitivity of the action follows from the fact that given any two
          basis vectors $v_1 = (a,c)$ and $v_2 = (b,d)$ of $\Qp^2$ which span a particular lattice, the invertible matrix 
          $g = \left( \begin{smallmatrix} a & b \\ c & d \end{smallmatrix} \right)$ of $G$
          takes the standard lattice $L_0$ spanned by $e_1 = (1,0)$ and $e_2 = (0,1)$ to $v_1$ and $v_2$ respectively.
          Finally note that $KZ$ is the stabilizer of the standard lattice $L_0$. Indeed $KZ$ clearly stabilizes it.
          If $g \in G$ stabilizes $[L_0]$, then
          there exists a scalar $z \in \Qp^*$ such that $g \Zp^2 = z \Zp^2$. This means that $z^{-1} g \in K$. Thus $g \in KZ$.

          We similarly claim that $G$ acts transitively on the oriented edges of the tree via
          $$g \cdot ([L],[L']) = ([gL],[gL'])$$
          for $g$ in $G$ and $[L], [L']$ adjacent vertices in the tree.
          Note that $g \cdot ([zL], [z'L']) = ([gzL], [gz'L']) = ([zgL], [z'gL'])
          = ([gL], [gL'])$ for $g \in G$, $z,z' \in Z$. Also, if say $L' \subsetneq L$ has index $p$, then
          $gL' \subsetneq gL$ also has index $p$. So the action is well defined. For the transitivity, assume
          that $L' \subsetneq L$ has index $p$. By the `matrix game' from Prof. Murray Schacher's first year
          graduate algebra course at UCLA in 1991, there is a $\Zp$-basis $v_1$, $v_2$ of $L$ with respect to which
          $L'$ is spanned by $v_1$ and $pv_2$. Then the matrix $g$ in the previous paragraph takes
          the standard edge $([L_0], [\alpha L_0])$  where
          $\alpha = \left( \begin{smallmatrix} 1 & 0 \\ 0 & p \end{smallmatrix} \right)$ to $([L],[L'])$. 
          Finally note that $IZ$ is the stabilizer of $([L_0], [\alpha L_0])$. Indeed
          a small check shows that
          $$I = K \cap \alpha K \alpha^{-1}$$
          so clearly $I$ stabilizes both $L_0$ and $\alpha L_0$, so $IZ$ stabilizes the standard edge.
          Conversely if $g \in G$ stabilizes the standard edge, then $([gL_0], [g \alpha L_0]) = ([L_0], [\alpha L_0])$
          so by the previous paragraph, looking at the first coordinate we get $g \in KZ$, and looking at the second
          coordinate we get $\alpha^{-1} g \alpha \in KZ$, so that $g \in KZ \cap \alpha KZ \alpha ^{-1} = IZ$, as desired.
          Note that in the last equality, the containment `$\supset$' is obvious by the display just above;
          the reverse containment `$\subset$' follows by a short computation: if $g = kz = \alpha k'z' \alpha^{-1}$
          for $k, k' \in K$, $z,z'\in Z$, then taking determinants we see that $2v_p(z) = 2v_p(z')$, so we
          may cancel a like power of $p$ on both sides and assume $z$ and $z'$ are units, in which case
          $k \in K \cap \alpha K \alpha^{-1} = I$ and so $g = kz \in IZ$.          
    \end{proof}


\subsection{Hecke algebras}

We now recall the definitions of the spherical Hecke algebra and the Iwahori-Hecke algebra. The development proceeds in parallel.
Consider the following irreducible mod $p$ {\bf representations}:
\begin{itemize}
\item Let $V_r = \SymF{r} = \{\text{homogeneous polynomials } v(X,Y) \text{ of degree } r \text{ over } \Fq \}$
  for $0 \leq r \leq p-1$. Then $K$ acts on $V_r$ noting $K \twoheadrightarrow \mathrm{GL}_2(\Fq)$ which in turn 
  acts on $V_r$ via 
  $$\left( \begin{smallmatrix} a & b \\ c & d \end{smallmatrix} \right) \cdot v(X,Y) = v(aX+cY, bX+dY).$$
  Extend the action to $KZ$ by making $p \in Z$ act trivially.
  
\item Let $d^r$ be the one-dimensional space over $\Fq$ for $0 \leq r \leq p-1$.
  Then $I$ acts on $d^r$ noting $I \twoheadrightarrow B(\Fp)$ for
  the Borel subgroup $B(\Fp)$ of upper-triangular matrices which in turn acts on the space $d^r$ via the character $d^r$ given by 
  $$\left( \begin{smallmatrix} a & b \\ 0 & d \end{smallmatrix} \right) \cdot v = d^r v.$$
  Again, extend the action to $IZ$ by making $p \in Z$ act trivially.
\end{itemize}  

We may {\bf compactly induce} these representations to $G$ to obtain function spaces:
\begin{itemize}
\item Let $\mathrm{ind}_{KZ}^G V_r = \{ f : G \rightarrow V_r \> \vert \> f(kg) = k \cdot f(g), \forall k \in KZ, g \in G\}$.
  Here we only consider those $f$ which are compactly supported mod $KZ$, i.e., those $f$ supported on only finitely
  many right cosets of $KZ$ in $G$. For instance, for $g \in G$, $v \in V_r$, we may
  consider the elementary function $[g,v]$ supported on the coset $KZg^{-1}$ defined by
  $$ [g,v](g') = \begin{cases}
    g'g \cdot v(X,Y) & \text{if } g' \in KZg^{-1} \\
    0                & \text{otherwise.}
                 \end{cases} $$
    
\item Similarly, let $\mathrm{ind}_{IZ}^G d^r = \{ f : G \rightarrow \Fq(d^r) \> \vert \>  f(ig) =  d^r(i) f(g), \forall i \in IZ, g \in G\}$.
  Again we only consider those $f$ which are compactly supported mod $IZ$, i.e., supported on only finitely many
  right cosets of $IZ$ in $G$.  Again, for $g \in G$, $v \in d^r$, we may
  consider the elementary function $\llbracket g,v \rrbracket$ supported on the coset $IZg^{-1}$ defined by
  $$ \llbracket g,v \rrbracket (g') = \begin{cases}
    g'g \cdot v & \text{if } g' \in IZg^{-1} \\
    0                & \text{otherwise.}
                 \end{cases} $$
\end{itemize}

Consider now the structure of the corresponding {\bf Hecke algebras}. These algebras are by definition the algebra of
$G$-equivariant endomorphisms of the above compactly induced spaces:
\begin{itemize}
\item (Spherical Hecke algebra) $$\mathrm{End}_G(\mathrm{ind}_{KZ}^G V_r) = \Fq[T],$$ where the action of the
      Hecke operator $T$ on an elementary
      function is given by
      \begin{eqnarray}
        \label{def of T}
        T[g,v] = \sum_{\lambda \in \Fp} [g \left(\begin{smallmatrix} p & [\lambda] \\ 0 & 1 \end{smallmatrix} \right), v(X, -[\lambda]X+pY)] +  [g \alpha, v(pX, Y)].
      \end{eqnarray}
    \item (Iwahori-Hecke algebra)  $$\mathrm{End}_G(\mathrm{ind}_{IZ}^G d^r) = \begin{cases}
              \Fq[T_{-1,0}, T_{1,2}] \text{ with } T_{-1,0}T_{1,2} = 0 = T_{1,2} T_{-1,0} & \text{if } r \neq 0,p-1, \\
              \Fq[T_{1,0}, T_{1,2}] \text{ with }  T_{1,2}T_{1,0}T_{1,2} = -T_{1,2}, T_{1,0}^2 = 1 & \text{if } r = 0,p-1, \\
            \end{cases} $$
            where the action of the Iwahori-Hecke operators $T_{1,0}$, $T_{-1,0}$ and $T_{1,2}$   
            are given
            by the formulas
            \begin{eqnarray}
              \label{def of Iwahori Hecke ops}
              T_{1,0}\llbracket g,v \rrbracket & = &  \llbracket g \beta, v \rrbracket \nonumber \\
              T_{-1,0}\llbracket g,v \rrbracket & = & \sum_{\lambda \in \Fp} \llbracket g \left(\begin{smallmatrix} p & [\lambda] \\ 0 & 1 \end{smallmatrix} \right), v \rrbracket \\
              T_{1,2}\llbracket g,v \rrbracket & = & \sum_{\lambda \in \Fp} \llbracket g \left(\begin{smallmatrix} 1 & 0 \\ p[\lambda] & p \end{smallmatrix} \right), v \rrbracket, \nonumber
            \end{eqnarray}
            where $\beta = \left(\begin{smallmatrix} 0 & 1 \\ p & 0 \end{smallmatrix} \right)$.
            
\begin{remark}
  We will later see that when $r = 0,p-1$, the operator $T_{-1,0}$ satisfies
            \begin{eqnarray}
              \label{source vs sink}
                T_{-1,0} = T_{1,0} T_{1,2} T_{1,0} 
            \end{eqnarray}
            and so lies in the Hecke algebra, but since
            it is generated by the other two generators, it has been dropped from the list of generators. This relation also
            shows that in this case the Iwahori-Hecke algebra is {\it non-commutative} since otherwise $T_{1,0}$ would commute
            past the $T_{1,2}$ to give $T_{-1,0} = T_{1,2}$ since the square of $T_{1,0}$ is the identity, which as we shall
            see is a contradiction. 
            The Iwahori-Hecke algebra is clearly commutative when $r \neq 0,p-1$, as we
            see from the relations between the generators above.
\end{remark}
\end{itemize}

We now {\bf reinterpret the above formulas for the Hecke operators} in terms of some classical operators
on the vertices and edges in graph theory (in the case $r = 0$).
We make use of the fact that left coset representatives can be taken to be the inverses of right coset
representatives. 
\begin{itemize}
\item We have $\mathrm{ind}_{KZ}^G V_0 = \{f : KZ \backslash G \rightarrow \Fq \} = \{f': G/KZ \rightarrow \Fq \}$ where the first equality
  is by definition (we always implicitly assume the compactly supported condition) and the 
  second equality is obtained by setting $f'(gKZ) = f(KZg^{-1})$.
  Now, this last space of functions can be thought of as functions on the vertices of the tree by Lemma~\ref{vertices-edges}.
  Under this identification we have 
  $$[g,1] \leftrightarrow f'
    =  \text{ characteristic function of the vertex } [gL_0], $$
    since
    \begin{eqnarray*}
       f'(gKZ) =  [g,1](KZg^{-1}) & = & g^{-1}g \cdot 1  =  1, \\
       f'(g'KZ)  =  [g,1](KZg'^{-1}) & = &  0 \> \text{ if } g'KZ \neq gKZ.
    \end{eqnarray*}
    Using this observation, and identifying the characteristic function of a vertex of the tree with the vertex itself,
    we see that the formula for $T$ in \eqref{def of T} is nothing but the usual formula for the {\it sum-of-neighbours
    operator} on vertices of the tree from classical graph theory. Indeed, when $g = 1$, we have
    $$T[1,1] = \sum_{\lambda \in \Fp} [\left(\begin{smallmatrix} p & [\lambda] \\ 0 & 1 \end{smallmatrix} \right), 1] +  [\alpha,1].$$
    But $[1,1]$ corresponds to the standard lattice $L_0$, and as is well known, the lattices
    $\left(\begin{smallmatrix} p & [\lambda] \\ 0 & 1 \end{smallmatrix} \right) L_0$, for $\lambda \in \Fp$, and $\alpha L_0$
    form a complete set of sublattices of $L_0$ of index $p$.
    
\item Similarly, we have $\mathrm{ind}_{IZ}^G d^0 = \{f : IZ \backslash G \rightarrow \Fq \} = \{f': G/IZ \rightarrow \Fq \}$
    where the first equality is again by definition (again the compactly supported condition is implicit) and the 
    second equality is obtained by setting $f'(gIZ) = f(IZg^{-1})$.
    This time, the last space of functions can be thought of as functions on the oriented edges of the tree by Lemma~\ref{vertices-edges}.
    Under this identification we have 
    $$\llbracket g,1 \rrbracket \leftrightarrow f'
    = \text{ characteristic function of the edge } ([gL_0], [g\alpha L_0]),
    $$
    
    since
    \begin{eqnarray*}
      f'(gIZ) =  \llbracket g,1 \rrbracket(IZg^{-1}) & = & g^{-1}g \cdot 1  =  1, \\
      f'(g'IZ)  =  \llbracket g, 1 \rrbracket(IZg'^{-1}) & = & 0 \> \text{ if } g'IZ \neq gIZ.
    \end{eqnarray*}
    Using this and identifying the characteristic function of an edge of the tree with the edge itself,
    we see that the formulas for $T_{1,0}$, $T_{-1,0}$ and $T_{1,2}$ in \eqref{def of Iwahori Hecke ops}
    are nothing but the {\it flip}, {\it source} and {\it sink} operators on the
    oriented edges of the tree. 

    Indeed, when $g = 1$, we have 
    $$T_{1,0} \llbracket 1,1 \rrbracket = \llbracket \beta,1 \rrbracket.$$
    But $\llbracket 1,1 \rrbracket$ corresponds to the standard edge $([L_0],[\alpha L_0])$ and
    $\llbracket \beta,1 \rrbracket$ corresponds to the edge $([\beta L_0],[\beta \alpha L_0])$ =  $([\alpha L_0], [p L_0])$
    (since $\beta = \alpha w$ and $\beta \alpha = p w$) which is the standard edge with flipped orientation (since $[p L_0] = [L_0]$).
    
    Similarly, we have
    \begin{eqnarray*}
      T_{-1,0} \llbracket 1,1 \rrbracket & = & \sum_{\lambda \in \Fp}                              
         \llbracket \left(\begin{smallmatrix} p & [\lambda] \\ 0 & 1 \end{smallmatrix} \right), 1 \rrbracket.
    \end{eqnarray*}                                                                                      
    This time the functions on the right correspond to the edges $( [\left(\begin{smallmatrix} p & [\lambda] \\ 0 & 1 \end{smallmatrix} \right) L_0], [\left(\begin{smallmatrix} p & [\lambda] \\ 0 & 1 \end{smallmatrix} \right) \alpha L_0]) = ( [\left(\begin{smallmatrix} p & [\lambda] \\ 0 & 1 \end{smallmatrix} \right) L_0], [L_0])$ (since $\left(\begin{smallmatrix} p & [\lambda] \\ 0 & 1 \end{smallmatrix} \right) \alpha = p \left(\begin{smallmatrix} 1 & [\lambda] \\ 0 & 1 \end{smallmatrix} \right)$ and 
   $[p \left(\begin{smallmatrix} 1 & [\lambda] \\ 0 & 1 \end{smallmatrix} \right) L_0] = [L_0]$) which are exactly $p$ of the
   edges with target the standard vertex $[L_0]$, which is the source of the standard edge $([L_0], [\alpha L_0])$. Thus the formula 
   for $T_{-1,0}$ reduces 
   to that of the classical source operator on oriented edges in graph theory. 
   
   We remark that the subscripts in this source operator $T_{-1,0}$ also hint at its definition.  Label the vertices 
   corresponding to the lattice spanned
   by $p^{-n} e_1$, $e_2$ by $n$ for $n \in {\mathbb Z}$. Then the formula for $T_{-1,0}$ 
   takes the standard oriented edge $(0,1)$ to all the oriented edges with target the source vertex $0$ - including 
   the edge $(-1,0)$ corresponding to $\lambda = 0$!

    Finally, we have 
    \begin{eqnarray*}
      T_{1,2} \llbracket 1,1 \rrbracket & = & \sum_{\lambda \in \Fp}                              
         \llbracket \left(\begin{smallmatrix} 1 & 0 \\ p [\lambda] & p \end{smallmatrix} \right), 1 \rrbracket.
    \end{eqnarray*}                                                                                      
    This time the functions on the right correspond to the edges 
   $( [\left(\begin{smallmatrix} 1 & 0 \\ p  [\lambda] & p \end{smallmatrix} \right) L_0], [\left(\begin{smallmatrix} 1 & 0 \\ p [\lambda] & p \end{smallmatrix} \right) \alpha L_0]) = ( [\alpha L_0], [L_\lambda])$ 
   (since $\left(\begin{smallmatrix} 1 & 0 \\ p [\lambda] & p \end{smallmatrix} \right) = \alpha  \left(\begin{smallmatrix} 1 & 0 \\ 
   [\lambda] & 1 \end{smallmatrix} \right)$ and so  
   $[\alpha \left(\begin{smallmatrix} 1 & 0 \\ [\lambda] & 1 \end{smallmatrix} \right) L_0] = [\alpha L_0]$) 
   which we claim are exactly $p$ of the
   edges emanating from the sink of the standard edge $([L_0],[\alpha L_0])$. Moreover, none is the flip of the standard edge,
   else we would have $[L_\lambda] = [L_0]$, which implies that $\left(\begin{smallmatrix} 1 & 0 \\ p [\lambda] & p^2 \end{smallmatrix} \right) = kz$ for some $k\in K$ and $z \in Z$ which by taking determinants implies $z = p$ which implies that
   $\left(\begin{smallmatrix} p^{-1} & 0 \\ [\lambda] & p  \end{smallmatrix} \right) \in K$, a contradiction.
   Thus $T_{1,2}$ is the sink operator from classical graph theory. 

   With respect to the labeling of the vertices mentioned above, we see that $T_{1,2}$ takes the standard oriented edge
   $(0,1)$ to all edges
   emanating from its sink  - including the oriented edge $(1,2)$! Thus again the subscripts allow one to recall the definition
   of $T_{1,2}$.

   Using this interpretation of the Hecke operators $T_{1,0}, T_{-1,0}, T_{1,2}$ as the flip, source and sink operators,
   it is now possible to check all the relations satisfied by these operators.
   To start with, we see $T_{-1,0} \neq T_{1,2}$ by their
   graph theoretic interpretations. Moreover:
   \begin{itemize} 
     \item the relation $T_{1,0}^2 = 1$ is now self-evident since flipping orientation twice does nothing.
     \item the relation $T_{1,2} T_{1,0} T_{1,2} = -T_{1,2}$  follows, since on the left the standard edge $(0,1)$ 
              first maps  to the $p$ edges emanating from $1$ such as $(1,2)$, then $T_{1,0}$ flips them, then 
              $T_{1,2}$ maps each flipped edge such as $(2,1)$, to the other edges coming out of $1$, namely
              $(1,0)$ and the other $p-1$ edges coming out of $1$. 
              Summing the action of $T_{1,2}$ of each flipped edge, we see that
              $(1,0)$ gets counted $p$ times, whereas the other edges coming out of $1$ get counted $p-1$ times. 
              But $p = 0$ in the mod $p$ Hecke algebra, so we obtain the formula for $-T_{1,2}$! 
     \item finally the relation \eqref{source vs sink} follows since the operations of flipping an edge, then taking
              the edges coming out of the new sink and finally flipping back is exactly the same thing as  
              taking the edges coming into the source of the original edge!
   \end{itemize}
   For further relations involving the Iwahori-Hecke operators see the work of Barthel and Livn\'e \cite{BL94}, \cite{BL95}.
\end{itemize}

\subsection{The ABC theorem}

Using the above constructions we may now define some {\bf basic mod $p$ representations of $G$}. Let 
\begin{itemize} 
  \item $0 \leq r \leq p-1$,
  \item $\eta : \mathrm{Gal}(\brqp/\qp) \rightarrow \brFp^*$ be a smooth character (also thought of as a character of
           $\qp^*$ by pre-composing with the Artin map $\Qp^* \rightarrow  \mathrm{Gal}(\brqp/\qp)^\mathrm{ab}$,
           and even a character of $G$ by further pre-composing with the determinant $G \twoheadrightarrow \qp^*$), and,
 \item  $\lambda \in \brFp$.
\end{itemize}
With this notation we define the basic mod $p$ representation $\pi(r,\lambda,\eta)$ of $G$ in two ways:
\begin{enumerate}
  \item Let $$\pi(r,\lambda,\eta) = \dfrac{\mathrm{ind}_{KZ}^G V_r}{T - \lambda} \otimes \eta.$$ 
  \item Let $$\pi(r, \lambda, \eta) = \dfrac{\mathrm{ind}_{IZ}^G d^r}{(T_{-1,0} + \delta_{r,p-1} T_{1,0})
                      + (T_{1,2} + \delta_{r,0} T_{1,0} - \lambda)} \otimes \eta$$
                    where $\delta_{i,j} = 1$ if $i = j$ and is $0$ otherwise.
\end{enumerate}

Every subject should have an ABC theorem, and so does this one:

\begin{theorem}[Anandavardhanan, Borisagar, Chitrao]
  The two definitions of $\pi(r,\lambda,\eta)$ given in the spherical and Iwahori cases above coincide. 
\end{theorem}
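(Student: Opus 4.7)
The plan is to relate the two presentations via transitivity of compact induction, writing $\mathrm{ind}_{IZ}^G d^r = \mathrm{ind}_{KZ}^G W_r$ with $W_r := \mathrm{ind}_{IZ}^{KZ} d^r$ the $(p+1)$-dimensional $KZ$-representation. The first step is to describe the $KZ$-module structure of $W_r$. Using the $B(\Fp)$-stable filtration of $V_r$ by degree in $X$, the top quotient $V_r/F^1$ is one-dimensional and $IZ$ acts on it through $d^r$, so Frobenius reciprocity produces a nonzero $KZ$-map $V_r \to W_r$, injective because $V_r$ is $\GL_2(\Fp)$-irreducible. A dimension count $(p+1)-(r+1) = p-r$ together with a highest-weight check on the cokernel yields, for $0 < r < p-1$, a non-split short exact sequence
\[
0 \to V_r \to W_r \to V_{p-1-r}\otimes\det^r \to 0,
\]
the non-splitting following because the $d^r$-isotypic subspace of $V_r$ is zero, so $\mathrm{Hom}_{KZ}(W_r,V_r) = 0$. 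For the boundary weights $r = 0, p-1$, where $d^r$ is trivial on $\Fp^*$, the analogous sequence $0 \to V_0 \to W_0 \to V_{p-1} \to 0$ in fact splits, being the canonical decomposition of functions on $\mathbb{P}^1(\Fp)$ into constants and a complementary piece, with the splitting provided by averaging (valid because $|\mathbb{P}^1(\Fp)| = p+1 \equiv 1 \pmod p$).

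Applying the exact functor $\mathrm{ind}_{KZ}^G$ produces a short exact sequence of $G$-representations
\[
0 \to \mathrm{ind}_{KZ}^G V_r \to \mathrm{ind}_{IZ}^G d^r \to \mathrm{ind}_{KZ}^G (V_{p-1-r}\otimes\det^r) \to 0.
\]
The heart of the proof is to compute the action of the Iwahori--Hecke generators $T_{-1,0}$, $T_{1,2}$, $T_{1,0}$ in terms of the spherical Hecke operator $T$, using the defining coset sums involving $\begin{pmatrix}p & [\lambda] \\ 0 & 1\end{pmatrix}$, $\begin{pmatrix}1 & 0 \\ p[\lambda] & p\end{pmatrix}$, and $\beta$. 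For the generic range $0 < r < p-1$, the claim to verify is that $T_{-1,0}$ annihilates the sub $\mathrm{ind}_{KZ}^G V_r$ and is invertible on the cokernel, while $T_{1,2}$ annihilates the cokernel and, on the sub, coincides with the spherical $T$ under the embedding $V_r \hookrightarrow W_r$. Both reductions come down to tracking what happens to the highest-weight vector $X^r$, aided by the commutative orthogonality $T_{-1,0}T_{1,2} = T_{1,2}T_{-1,0} = 0$.

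Given these identifications, imposing $T_{-1,0} = 0$ and $T_{1,2} = \lambda$ kills the cokernel and reduces $\mathrm{ind}_{IZ}^G d^r / (T_{-1,0},\, T_{1,2}-\lambda)$ to $\mathrm{ind}_{KZ}^G V_r / (T-\lambda)$, which proves the theorem for $0 < r < p-1$. For the boundary weights $r = 0, p-1$ the Iwahori--Hecke algebra is non-commutative through the relation $T_{-1,0} = T_{1,0}T_{1,2}T_{1,0}$, and the correction terms $\delta_{r,0}T_{1,0}$ and $\delta_{r,p-1}T_{1,0}$ in the Iwahori presentation of $\pi(r,\lambda,\eta)$ are designed so that imposing the modified relations selects the correct summand of the split sequence above: the constants $V_0$ for $r = 0$ and the Steinberg-type piece $V_{p-1}$ for $r = p-1$. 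The main obstacle is the Hecke-operator bookkeeping: identifying $T_{1,2}$ with the spherical $T$ on the sub requires a careful matching of coset representatives and the $\det^r$ twist, and in the non-commutative cases one must additionally verify that the corrected operators annihilate the undesired summand before cutting the desired one by $T - \lambda$.
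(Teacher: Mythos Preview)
Your approach is essentially the same as the paper's: both proceed via transitivity of compact induction, identifying $\mathrm{ind}_{IZ}^G d^r$ with $\mathrm{ind}_{KZ}^G W_r$ for $W_r = \mathrm{ind}_{IZ}^{KZ} d^r$, and then analysing the $KZ$-structure of $W_r$ together with the action of the Iwahori--Hecke generators. For the boundary cases $r=0,p-1$ the match is exact --- your splitting $W_0 \cong V_0 \oplus V_{p-1}$ is precisely the paper's $V_{2p-2}/V_{2p-2}^* = V_0 \oplus V_{p-1}$, and the paper's idempotent $-T_{1,2}T_{1,0}$ and subsequent flip by $T_{1,0}$ are the explicit mechanism you allude to when you say the correction terms ``select the correct summand''. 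For $0<r<p-1$ the paper simply cites \cite{AB15} for the isomorphism $\mathrm{ind}_{IZ}^G d^r / \mathrm{Im}\,T_{-1,0} \simeq \mathrm{ind}_{KZ}^G V_r$ with $T_{1,2}\leftrightarrow T$, whereas you unpack this via the non-split SES for $W_r$.

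One caution: your assertion that $T_{-1,0}$ is \emph{invertible} on the cokernel is stronger than what is needed and not obviously true as stated --- what \cite{AB15} actually gives (and what suffices) is that the quotient by $\mathrm{Im}\,T_{-1,0}$ is isomorphic to $\mathrm{ind}_{KZ}^G V_r$, together with $\ker T_{-1,0} = \mathrm{Im}\,T_{1,2}$ and vice versa. Whether this image coincides with the sub from your SES (so that your four bulleted claims hold on the nose) requires checking that your Frobenius-reciprocity inclusion lands in $\mathrm{Im}\,T_{1,2}$; you correctly flag this ``Hecke-operator bookkeeping'' as the main obstacle, and it is indeed where the content lies.
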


\begin{proof} When $0 < r < p-1$, the statement is simpler. One needs to show for $\lambda \in \brFp$ (and $\eta = 1$)
  that
   $$ \dfrac{\mathrm{ind}_{IZ}^G d^r}{T_{-1,0} 
     + (T_{1,2}  - \lambda)} \simeq  \dfrac{\mathrm{ind}_{KZ}^G V_r}{T - \lambda}.$$
   For this, it suffices to show that there is an isomorphism 
   $$ \dfrac{\mathrm{ind}_{IZ}^G d^r}{T_{-1,0}} \simeq  \mathrm{ind}_{KZ}^G V_r$$
   with the action of $T_{1,2}$ on the left corresponding to the action of $T$ on the right. 
   This was proved in \cite{AB15} and we do not comment on it further.

   The case $r = 0,p-1$ was
   only proved more recently in \cite{Chi23}. As above, it suffices to show that there is an isomorphism
   $$\dfrac{\mathrm{ind}_{IZ}^G 1}{T_{-1,0} + \delta_{r,p-1} T_{1,0}} \simeq \mathrm{ind}_{KZ}^G V_r$$
   with the action of $T_{1,2} + \delta_{r,0} T_{1,0}$ on the left corresponding to the action of $T$ on the right.
   The argument is a bit more delicate since it involves the non-commutative
   Iwahori-Hecke algebra above. We sketch the main points here - for details see \cite{Chi23}.

   Extend the definition of $V_r$ above to any degree $r \geq 0$. Let $V_r^* \subset V_r$ be the subspace of polynomials
   which are divisible by the Dickson polynomial $\theta = X^{p}Y - XY^{p}$. Then
   $$\dfrac{V_{2p-2}}{V_{2p-2}^*} = V_0 \oplus V_{p-1}$$
   and the generators of these two spaces on the
   right are given by the polynomials $X^{2p-2} - X^{p-1}Y^{p-1} + Y^{2p-2}$ and $X^{2p-2}$ respectively.
   It is not hard to see that $$\dfrac{V_{2p-2}}{V_{2p-2}^*} \simeq \mathrm{ind}_{IZ}^{KZ} 1$$ under the usual
   `evaluation of a polynomial at the lower row of the matrix' map, with $Y^{2p-2} - X^{p-1}Y^{p-1}$ corresponding
   to the function on $KZ$ that takes the coset $IZ$ to $1$ and the other cosets to $0$.
   Inducing both sides from $KZ$ to $G$
   we obtain an isomorphism
   \begin{eqnarray*}
     \mathrm{ind}_{IZ}^G 1 & \rightarrow & \mathrm{ind}_{KZ}^G \dfrac{V_{2p-2}}{V_{2p-2}^*} \\
     \llbracket \id,1 \rrbracket & \mapsto & [\id , Y^{2p-2} - X^{p-1}Y^{p-1}].
   \end{eqnarray*}
   Now the relation $T_{1,2} T_{1,0} T_{1,2} = -T_{1,2}$ in the Iwahori-Hecke algebra shows that
   $-T_{1,2} T_{1,0}$ is an idempotent, so the left hand side decomposes as  $\im (T_{1,2}T_{1,0}) \oplus \im (1+T_{1,2}T_{1,0})$.
   The right hand side decomposes as $\mathrm{ind}_{KZ}^G V_0 \oplus  \mathrm{ind}_{KZ}^G V_{p-1}$. The main technical
   result in \cite{Chi23} is that the above isomorphism takes
   $\im (T_{1,2}T_{1,0})$ to $\mathrm{ind}_{KZ}^G V_{p-1}$ (since one checks $T_{1,2}T_{1,0} \llbracket \id, 1 \rrbracket \mapsto [1,X^{2p-2}])$
   and $\im (1+T_{1,2}T_{1,0})$ to $\mathrm{ind}_{KZ}^G V_0$ (since now $(1+T_{1,2}T_{1,0}) \llbracket \id, 1 \rrbracket \mapsto [1,Y^{2p-2}-X^{p-1}Y^{p-1}+X^{2p-2}]$), 
   and so induces isomorphisms:
   \begin{eqnarray}
     \label{technical}
     \dfrac{\mathrm{ind}_{IZ}^G 1}{T_{1,2} T_{1,0}} \simeq  \mathrm{ind}_{KZ}^G V_{0} & \text{and} & 
     \dfrac{\mathrm{ind}_{IZ}^G 1}{1+T_{1,2} T_{1,0}} \simeq  \mathrm{ind}_{KZ}^G V_{p-1},
   \end{eqnarray}
   where the action of $T_{-1,0} + T_{1,0}$ (respectively $T_{-1,0}$) on the left corresponds to the action of $T$ on the right.
   Here we have used the small checks that $\im (T_{1,2}T_{1,0}) \subset \ker(T_{-1,0} + T_{1,0})$ and similarly
   $\im (1+ T_{1,2}T_{1,0}) \subset \ker(T_{-1,0})$ so that these operators do indeed act on the quotients on the
   left in \eqref{technical}.

   Now clearly the flip involution $T_{1,0}$ induces an isomorphism
   $$\mathrm{ind}_{IZ}^G 1 \simeq \mathrm{ind}_{IZ}^G 1$$
   where the roles of the source operator $T_{-1,0}$ and the sink operator $T_{1,2}$ get interchanged.
   This induces an isomorphism (where we use \eqref{source vs sink})
   $$\dfrac{\mathrm{ind}_{IZ}^G 1}{T_{1,2} T_{1,0}} \simeq  \dfrac{\mathrm{ind}_{IZ}^G 1}{T_{1,0} (T_{1,2} T_{1,0}) = T_{-1,0} +  \delta_{0,p-1} T_{1,0}}$$
   where $T_{-1,0}+T_{1,0}$ on the left corresponds to the conjugate $T_{1,0}(T_{-1,0}+T_{1,0}) T_{1,0} = T_{1,2} + \delta_{0,0} T_{1,0}$ on the right.
   Similarly, we get an induced isomorphism (again by \eqref{source vs sink})
   $$\dfrac{\mathrm{ind}_{IZ}^G 1}{1+T_{1,2} T_{1,0}} \simeq  \dfrac{\mathrm{ind}_{IZ}^G 1}{T_{1,0}(1 + T_{1,2} T_{1,0}) = T_{-1,0} + \delta_{p-1,p-1} T_{1,0}}$$
   where $T_{-1,0}$ on the left corresponds to the conjugate $T_{1,0}(T_{-1,0}) T_{1,0} = T_{1,2} + \delta_{p-1,0} T_{1,0}$ on the right.
   Combining these two isomorphisms with the two in \eqref{technical} proves the theorem for $r = 0, p-1$. 
 \end{proof}
 
We remark that the mod $p$ representation $\pi(r,\lambda, \eta)$ of $G$ is mostly irreducible, e.g., it always is 
when $(r,\lambda) \neq (0, \pm 1)$, $(p-1, \pm 1)$. 

\subsection{mod $p$ Galois representations}

We will be brief here. Let $G_{{\mathbb Q}_p} = \mathrm{Gal}(\brqp/\Qp)$ be the Galois group of $\qp$ and $I_{{\mathbb Q}_p}$
be the inertia subgroup of $G_{{\mathbb Q}_p}$. 

\begin{lemma}
  Every $n$-dimensional irreducible representation $\bar\rho$ of $G_{{\mathbb Q}_p}$ over $\brFp$ is of
  the form $\mathrm{ind}_{G_F}^{G_{{\mathbb Q}_p}} \chi$
  for $F$ the unramified extension of $\Qp$ of degree $n$ and some character $\chi : G_F \rightarrow \brFp^*$.
\end{lemma}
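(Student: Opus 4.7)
The plan is the standard Bruhat--Tits/tame-quotient argument, proceeding in four steps.

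First, I would show that the wild inertia $P_{\Qp}$ (the pro-$p$ Sylow of $I_{\Qp}$) acts trivially on $\bar\rho$. Since $P_{\Qp}$ is pro-$p$ and acts on a finite dimensional $\brFp$-vector space, an orbit-counting argument (orbits on $V\setminus\{0\}$ have $p$-power size, so the fixed subspace must have the same cardinality modulo $p$ as $V$) gives $V^{P_{\Qp}}\neq 0$. By normality of $P_{\Qp}$ in $G_{\Qp}$, this fixed subspace is $G_{\Qp}$-stable, so by irreducibility it is all of $V$. Hence $\bar\rho$ factors through the tame quotient $G_{\Qp}/P_{\Qp}$.

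Second, I would use that the tame inertia $I_{\Qp}^{\mathrm{t}} := I_{\Qp}/P_{\Qp}$ is pro-cyclic, hence abelian. Thus $\bar\rho|_{I_{\Qp}}$ decomposes as a direct sum of characters $\chi_1,\ldots,\chi_m$ (with multiplicities) of $I_{\Qp}^{\mathrm{t}}$. The characters $\omega_d$ of level $d$ (the ones that factor through $\Fq_{p^d}^{\times}$ via the usual identification) exhaust the characters of $I_{\Qp}^{\mathrm{t}}$ of order prime to $p$, and the Frobenius $\varphi\in G_{\Qp}/I_{\Qp}$ acts on these characters by $\chi\mapsto \chi^p$.

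Third, I would apply Clifford theory. Pick one character $\chi_1$ appearing in $\bar\rho|_{I_{\Qp}}$, and look at its $\varphi$-orbit: $\chi_1,\chi_1^p,\chi_1^{p^2},\ldots$ By irreducibility of $\bar\rho$ and the fact that $\varphi$ permutes the isotypic components, all characters in the orbit must appear, and the isotypic decomposition is a transitive orbit under $\varphi$. Let $n'$ be the size of this orbit, so $\chi_1^{p^{n'}}=\chi_1$; equivalently, $\chi_1$ is fixed by $\mathrm{Frob}^{n'}$, i.e., $\chi_1$ extends to the subgroup $G_F\subset G_{\Qp}$ where $F$ is the unramified extension of $\Qp$ of degree $n'$. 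Choose any such extension (call it $\chi: G_F\to\brFp^{\times}$); the $\chi_1$-isotypic subspace $W\subset V$ is then stable under $G_F$ and furnishes a representation of $G_F$ containing $\chi$.

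Finally, by Frobenius reciprocity / Clifford theory, $V=\bigoplus_{i=0}^{n'-1}\varphi^i W$ is $\mathrm{ind}_{G_F}^{G_{\Qp}}W$, and this induced representation must be irreducible since $V$ is. Since induction of a nonzero representation from an index $n'$ subgroup has dimension at least $n'$, and induction of a character gives dimension exactly $n'$, irreducibility forces $\dim W=1$ and $n'=n$. Thus $\bar\rho\cong \mathrm{ind}_{G_F}^{G_{\Qp}}\chi$, as desired. The main subtlety is the Clifford-theoretic step of checking that the $\chi_1$-isotypic component is one-dimensional and that $\chi_1$ extends to a character of $G_F$ (rather than merely to a projective representation); both are consequences of the fact that the quotient $G_F/I_{\Qp}$ is procyclic (generated by $\mathrm{Frob}^{n'}$), so any extension problem from $I_{\Qp}$ to $G_F$ is trivial.
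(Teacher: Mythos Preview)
Your approach is essentially the same as the paper's: wild inertia acts trivially, decompose on tame inertia, Clifford theory with the Frobenius orbit, and then use that $G_F/I_{\Qp}$ is procyclic to get down to a character. One small point: your sentence ``irreducibility forces $\dim W=1$'' with the justification about dimensions of inductions is not quite a valid deduction on its own (an induced representation $\mathrm{ind}\,W$ can be irreducible with $\dim W>1$); the real reason, which you correctly identify only in your closing parenthetical, is that $W$ is irreducible over $G_F$ and $G_F/I_{\Qp}$ is procyclic, so $\mathrm{Fr}_p^{n'}$ has an eigenvector in $W$ and hence $W$ is one-dimensional. The paper makes this step explicit by first picking an eigenvector of $\mathrm{Fr}_p^{n/d}$ in the isotypic component and then applying Frobenius reciprocity to the resulting character.
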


\begin{proof}
  This is perhaps well-known, but it is difficult to find a proof (for
  general $n$) in the literature so we give a proof. 
  As for $n = 2$,  the proof starts by noting that the wild inertia subgroup $I_{{\mathbb Q}_p}^{\mathrm{w}}$ of  $I_{{\mathbb Q}_p}$
  acts trivially. Indeed, $\bar\rho$ has a non-zero  $I_{{\mathbb Q}_p}^{\mathrm{w}}$-invariant vector since the subgroup is a
  pro-$p$ group
  and we are working mod $p$. Moreover, $I_{{\mathbb Q}_p}^{\mathrm{w}}$ is normalized by $G_{{\mathbb Q}_p}$
  so the invariant vectors are  $G_{{\mathbb Q}_p}$-stable, and hence everything, by irreducibility. So
  $\bar\rho |_{I_{{\mathbb Q}_p}}$ factors through the tame inertia group $I_{{\mathbb Q}_p}^{\mathrm{t}} =
  I_{{\mathbb Q}_p} / I_{{\mathbb Q}_p}^{\mathrm{w}}$ which is abelian and prime to $p$, so the restricted representation
  $\bar\rho |_{I_{{\mathbb Q}_p}}$ is 
  a direct sum of mod $p$ characters of $I_{{\mathbb Q}_p}$.
  Let $\chi_0$ be one such, and say the $\chi_0$-isotypic component of $\bar\rho |_{I_{{\mathbb Q}_p}}$has dimension $d$. 
  A Frobenius element $\mathrm{Fr}_p$ of $G_{{\mathbb Q}_p}$ acts on $\chi_0$ by inner conjugation.
  Say that $\chi_i(g)  = \chi(\mathrm{Fr}^i_p g \mathrm{Fr}_p^{-i})$ for $i = 0, \ldots, n/d-1$ are the $n/d$ distinct characters
  so obtained so that $\chi_{n/d} = \chi_0$ and   $$\bar\rho |_{I_{{\mathbb Q}_p}} = \chi_0^{\oplus d} \oplus \chi_1^{\oplus d} \oplus
  \cdots \oplus \chi_{n/d -1}^{\oplus d}.$$ Now $\mathrm{Fr}_p^{n/d}$ preserves $\chi_0^d$, so has an eigen-vector. This will
  also be an eigen-vector under the action of $G_F = \mathrm{Gal}(\brqp/F)$ where $F$ is the maximal unramified extension of $\qp$
  of degree $n/d$ (since $G_F$ is generated by $I_{\qp}$ and $\mathrm{Fr}_p^{n/d}$), say with eigen-character $\chi$.
  Now $\chi \hookrightarrow \bar\rho |_{G_F}$
  implies by Frobenius reciprocity that
  there is a non-zero map $\mathrm{ind}_{G_F}^{G_{{\mathbb Q}_p}} \chi \rightarrow \bar\rho$. 
  This map must be surjective by irreducibility of the target and therefore an isomorphism for dimension reasons
  since the LHS has dimension $n/d$. So we must have $d = 1$, and the lemma follows.
\end{proof}

We return to the case of $n =2$. Recall that $\omega : G_{{\mathbb Q}_p} \rightarrow \Fp^*$ is the fundamental
character of level $1$ (the mod $p$ cyclotomic character). Let $F = {\bq_{p^2}}$ be the unramified
quadratic extension of $\qp$ and  ${G_{\bq_{p^2}}} =  \mathrm{Gal}(\brqp/\bq_{p^2})$. Then recall that 
$\omega^c_2 : I_{{\mathbb Q}_p} \rightarrow \mathbb{F}_{p^2}^*$ is the $c$-th power (for $p+1 \nmid c$) of the
fundamental character of level $2$, extended to $G_{\bq_{p^2}}$ so that
$\det \left(  \mathrm{ind }\> \omega_2^c \right) = \omega^c$, where the induction is from ${G_{\bq_{p^2}}}$ to ${G_{\qp}}$.
Finally, recall $\mu_{\lambda} :  G_{{\mathbb Q}_p} \rightarrow \brFp^*$ is the unramified character taking $\mathrm{Fr}_p^{-1}$
(geometric Frobenius) to $\lambda  \in \brFp^*$.

By the lemma, every two-dimensional (semi-simple) mod $p$ representation of $G_{{\mathbb Q}_p}$ is of the form
\begin{enumerate}
  \item $\mu_{\lambda} \omega^a \oplus \mu_{\lambda'} \omega^b$ (reducible case) 
  \item $\mathrm{ind }\> \omega_2^c \otimes \mu_{\lambda''}$ (irreducible case), 
\end{enumerate}
for some integers $a,b,c$ (with $p+1 \nmid c$), and some $\lambda,\lambda',\lambda'' \in \brFp^*$.

\subsection{Iwahori mod $p$ LLC}

We can now state an Iwahori theoretic version of Breuil's semi-simple mod $p$ LLC for $\qp$.
Recall that for $0 \leq r \leq p-1$, $\lambda \in \brFp$ and $\eta: \Qp^* \to \brFp^*$ a smooth character,
we had defined the following smooth mod $p$ representation of $G$ 
\begin{eqnarray*}
    \label{def of pi}
    \pi(r,\lambda, \eta) & :=      & \frac{\IZind d^r}{(T_{-1, 0} + \delta_{r, p - 1}T_{1, 0})
                                       + (T_{1, 2} + \delta_{r, 0}T_{1, 0} - \lambda)} \otimes \eta,
\end{eqnarray*}
where $\delta_{a,b} = 1$ if $a = b$ and is $0$ otherwise.

  \begin{theorem}[Iwahori mod $p$ LLC]\label{Iwahori mod p LLC}
    For $r \in \{0, \ldots, p - 1\}$, $\lambda \in \brFp$ and  $\eta: \Qp^* \to \brFp^*$ a smooth
    character, we have the following correspondence between mod
    $p$ representations of $G_{{\mathbb Q}_p}$ and certain smooth mod $p$ representations of $G = \mathrm{GL}_2(\qp)$. 
\begin{itemize}
            \item If $\lambda = 0$:
                \[
                    (\ind \omega_2^{r + 1}) \otimes \eta \>\> \longmapsto
                    \>\>  \pi(r,0,\eta) \quad \qquad \qquad \qquad \qquad \quad 
                \]
            \item If $\lambda \neq 0$:
                \begin{eqnarray*}
                    \> \> \> \> \> (\mu_{\lambda}\omega^{r + 1} \oplus
\mu_{\lambda^{-1}})\otimes \eta & \longmapsto &
       \pi(r,\lambda,\eta)^{\rmss} \oplus     \pi([p-3-r],\lambda^{-1},\eta \omega^{r+1})^{\rmss},                             
\\
            \end{eqnarray*}
where $[a] \in \{0,\ldots,p-2\}$ represents the class of $a$ modulo $(p-1)$.    
            \end{itemize}
\end{theorem}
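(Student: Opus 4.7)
The plan is to deduce this theorem from Breuil's classical spherical semi-simple mod $p$ Local Langlands Correspondence, using the ABC Theorem proved earlier in the excerpt as the translation device.

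First, I would invoke the ABC Theorem to unify the two presentations of $\pi(r, \lambda, \eta)$. Regardless of whether $0 < r < p-1$ (commutative Hecke algebra) or $r \in \{0, p-1\}$ (non-commutative Hecke algebra), the ABC isomorphism yields a canonical identification
$$\pi(r, \lambda, \eta) \;\cong\; \frac{\mathrm{ind}_{KZ}^G V_r}{T - \lambda} \otimes \eta.$$
For $0 < r < p - 1$, quotienting the Iwahori side by $T_{-1,0}$ produces $\mathrm{ind}_{KZ}^G V_r$ with $T_{1,2}$ corresponding to $T$, so further quotienting by $T_{1,2} - \lambda$ gives the desired spherical quotient. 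For $r \in \{0, p-1\}$, the non-commutative ABC Theorem shows that quotienting by $T_{-1,0} + \delta_{r, p-1} T_{1,0}$ yields $\mathrm{ind}_{KZ}^G V_r$ with $T_{1,2} + \delta_{r, 0} T_{1,0}$ corresponding to $T$.

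Second, with this identification the theorem reduces to Breuil's original spherical statement applied to $\mathrm{ind}_{KZ}^G V_r / (T - \lambda) \otimes \eta$. In the supersingular case $\lambda = 0$, this quotient is irreducible and Breuil's correspondence assigns to it the irreducible Galois representation $(\ind \omega_2^{r+1}) \otimes \eta$, giving the first bullet. In the non-supersingular case $\lambda \neq 0$, the quotient is, up to semi-simplification, a principal series, and Breuil's correspondence pairs the Galois representation $(\mu_\lambda \omega^{r+1} \oplus \mu_{\lambda^{-1}}) \otimes \eta$ with the direct sum of two principal series corresponding to the two orderings of the characters on the Galois side. The summand $\pi([p-3-r], \lambda^{-1}, \eta \omega^{r+1})^{\rmss}$ corresponds precisely to the swap of the two characters; this can be verified by matching Iwahori-fixed vectors and Hecke eigenvalues under the reindexing $r \leftrightarrow [p-3-r]$ with the compensating twist $\eta \leftrightarrow \eta \omega^{r+1}$, together with the standard symmetry $\mathrm{Ind}_B^G(\chi_1, \chi_2) \sim \mathrm{Ind}_B^G(\chi_2, \chi_1)$ of semi-simplified principal series.

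The main obstacle will be the exceptional boundary cases $(r, \lambda) \in \{(0, \pm 1), (p-1, \pm 1)\}$ noted in the remark following the theorem, where $\pi(r, \lambda, \eta)$ is itself reducible and its semi-simplification contains a Steinberg-type constituent together with a character. Here the non-commutative version of the ABC Theorem is essential: the idempotent decomposition coming from $-T_{1,2} T_{1,0}$ cleanly separates $\mathrm{ind}_{IZ}^G \mathbf{1}$ into the two summands corresponding to $\mathrm{ind}_{KZ}^G V_0$ and $\mathrm{ind}_{KZ}^G V_{p-1}$, and this is exactly what is needed to track the individual constituents across the isomorphism and pair them correctly with Breuil's spherical assignment so that multiplicities agree on both sides. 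Once these boundary cases are handled by an explicit Hecke-algebra computation, the rest of the theorem follows immediately from the ABC Theorem combined with Breuil's classical result.
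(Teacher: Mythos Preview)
Your proposal is correct and follows essentially the same route as the paper: invoke the ABC Theorem to identify the Iwahori-theoretic $\pi(r,\lambda,\eta)$ with Breuil's spherical $\mathrm{ind}_{KZ}^G V_r/(T-\lambda)\otimes\eta$, and then quote Breuil's original semi-simple mod $p$ LLC verbatim. The paper's proof is in fact a single sentence to this effect. Your discussion of the boundary cases $(r,\lambda)\in\{(0,\pm1),(p-1,\pm1)\}$ as a ``main obstacle'' is unnecessary extra work: once the ABC Theorem gives the isomorphism of $G$-representations, Breuil's statement (which is already formulated in terms of the spherical $\pi$'s, including semi-simplifications in the reducible cases) transfers immediately, with no separate constituent-tracking required.
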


\begin{proof}
  This follows immediately from the ABC theorem and an identical statement (the definition of the semi-simple mod $p$ LLC)
  due to Breuil \cite{Bre03b} but where the $\pi(r,\lambda, \eta)$ are defined using spherical induction. In fact, the end goal
  of \cite{Chi23} was to be able to state such an Iwahori theoretic version of the mod $p$ LLC.
\end{proof}

\section{Functions of a $p$-adic variable}

In this section, we recall some functional analysis for functions of one variable on $\qp$.
Our main reference is \cite{Col10a} which is an excellent introduction to the topic. 

\subsection{$p$-adic Banach spaces}

\subsubsection{$E$-Banach spaces}

Let $E \subset {\mathbb C}_p$ be a subfield, usually taken to be a finite extension of $\qp$, ${\mathcal O}_E$
its valuation ring, ${\mathfrak m}_E$ its maximal ideal, and $k_E = {\mathcal O}_E / {\mathfrak m}_E$ its residue
field. Let $v_p$ be the valuation on ${\mathbb C}_p$ normalized such that $v_p(p) = 1$ and let $|x|_p = p^{-v_p(x)}$
be the corresponding norm on ${\mathbb C}_p$.

Let $B$ be an $E$-vector space. A {\it valuation} on $B$ is a function $v_B : B \rightarrow {\mathbb R} \cup \{ \infty \}$
such that
\begin{itemize}
  \item [i)]   $v_B(x) = \infty \iff x = 0$
  \item [ii)]  $v_B(x+y) \geq \inf (v_B(x), v_B(y))$ for all $x, y \in B$
  \item [iii)] $v_B(\lambda x) = v_p(\lambda) + v_B(x)$ for all $\lambda \in E$, $x \in B$.
\end{itemize}
An {\it $E$-Banach space} is a topological vector space over $E$ with topology given by a valuation $v_B$
with respect to which the topology is complete. A {\it map} $f : B_1 \rightarrow B_2$ of $E$-Banach spaces
is a linear map that is continuous. A map $f : B_1 \rightarrow B_2$ is an isometry if it is bijective and
$v_{B_2}(f(x)) = v_{B_1}(x)$ for all $x \in B$ (this last condition implies that $f$ must be injective).

For example, if $I$ is an indexing set (assumed in these notes to be ${\mathbb N}_{\geq 0}$, mostly 
to avoid some extra language),
then the following are $E$-Banach spaces:
\begin{enumerate}
  \item (bounded sequences) $$l_\infty(I,E) = \{ (a_i)_{i \in I} \mid |a_i|_p \text{ is bounded above} \}$$
    with valuation $v_{l_\infty}((a_i)) = \inf_{i \in I} v_p(a_i)$.
  \item (null sequences) the subspace $$l^0_\infty(I,E) = \{ (a_i)_{i \in I} \mid |a_i|_p \rightarrow 0 \}$$ with
    the same valuation. 
\end{enumerate}
The space $l_\infty^0(I,E)$ is the closure in $l_\infty(I,E)$ of the subspace
    consisting of sequences with only finitely many non-zero terms.

\begin{Proposition}
  \label{Open map BS}
  \begin{enumerate}
  \item (Open mapping theorem)
    If $f: B_1 \rightarrow B_2$ is a map of $E$-Banach spaces with $f$ bijective, then $f^{-1}$ is continuous.
  \item (Banach-Steinhaus theorem) A limit of (linear, continuous) maps between Banach spaces is continuous.
  \end{enumerate}
\end{Proposition}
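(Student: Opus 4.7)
The plan for both parts is to invoke the Baire category theorem, which applies to any complete metric space and in particular to the $E$-Banach spaces $B_1$ and $B_2$. For $n \in \mathbb{Z}$ and $i \in \{1,2\}$, write $\mathcal{B}_i(n) := \{x \in B_i : v_{B_i}(x) \geq n\}$ for the closed ball of valuation at least $n$. The non-archimedean inequality makes each such ball an additive subgroup (and in fact clopen), which simplifies several steps compared with the classical real or complex Banach setting.

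For part (1), the Open Mapping Theorem, I would proceed as in the classical case. Since $f$ is a bijective linear map, $B_2 = \bigcup_{n} f(\mathcal{B}_1(-n))$. Applying Baire to the complete space $B_2$, some $\overline{f(\mathcal{B}_1(-n_0))}$ has non-empty interior; after translating and using that $f(\mathcal{B}_1(-n_0))$ is an additive subgroup (so its closure is too), one obtains $\mathcal{B}_2(m) \subset \overline{f(\mathcal{B}_1(-n_0))}$ for some integer $m$. A standard iterative approximation, exploiting completeness of $B_1$ and the non-archimedean estimate $v_{B_1}(\sum x_i) \geq \inf_i v_{B_1}(x_i)$, upgrades this to $\mathcal{B}_2(m) \subset f(\mathcal{B}_1(-n_0-C))$ for some fixed $C$: given $z$ in the left-hand ball, choose $x_1 \in \mathcal{B}_1(-n_0)$ with $v_{B_2}(z-f(x_1)) \geq m+1$, iterate on the residual, and take $x = \sum_i x_i$, which converges in $B_1$ to a preimage of $z$ with controlled valuation. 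Hence $f$ maps a ball around $0$ onto a neighborhood of $0$, which by linearity and translation is equivalent to $f$ being open, i.e., $f^{-1}$ being continuous.

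For part (2), the Banach-Steinhaus theorem, let $(f_n)$ be a sequence of continuous linear maps $B_1 \to B_2$ converging pointwise to a map $f$ (which is automatically linear). For each integer $k$ set
\[
A_k = \{x \in B_1 : v_{B_2}(f_n(x)) \geq -k \text{ for all } n\} = \bigcap_n f_n^{-1}(\mathcal{B}_2(-k));
\]
each $A_k$ is closed in $B_1$ as an intersection of preimages of closed balls under continuous maps. Pointwise convergence implies that $(v_{B_2}(f_n(x)))_n$ is bounded below for each fixed $x$, so $B_1 = \bigcup_k A_k$, and Baire provides some $A_{k_0}$ with non-empty interior. Translating by an interior point and using linearity of each $f_n$ together with the fact that $A_{k_0}$ is closed under differences, one deduces an $M$ such that $v_{B_1}(x) \geq M$ implies $v_{B_2}(f_n(x)) \geq 0$ uniformly in $n$; this is the equicontinuity of the family at the origin. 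Passing to the pointwise limit preserves the valuation inequality, so $v_{B_1}(x) \geq M$ implies $v_{B_2}(f(x)) \geq 0$, which by linearity extends to continuity of $f$ on all of $B_1$.

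The main obstacle is the bookkeeping in the iterative step of part (1): one must choose the approximations $x_i \in B_1$ with valuations tending to infinity in $i$ to force convergence of the partial sums, while simultaneously controlling the valuation of the limit $x$ so that it lies in a fixed ball of $B_1$ depending only on $z$. The non-archimedean triangle inequality makes the actual convergence almost automatic (no geometric series summation, unlike the classical case), but the uniform valuation control requires the Baire-derived constants $n_0$ and $m$, and the adjustment $C$, to be chosen compatibly so that no factor is lost at the limit. Everything else — closedness of the $A_k$, the passage from interior point to equicontinuity at $0$, and the final linearity arguments — is routine and mirrors the classical Banach-space proofs.
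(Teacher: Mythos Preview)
The paper states this proposition without proof; it is recorded as background (the section's main reference is Colmez \cite{Col10a}, and these are classical facts about $p$-adic Banach spaces). Your argument via the Baire category theorem is the standard one and is correct in outline. One small slip: after translating the interior point in part~(2), what you actually obtain is that $v_{B_1}(x) \geq M$ implies $v_{B_2}(f_n(x)) \geq -k_0$ (not $\geq 0$), but this of course still yields equicontinuity and hence continuity of the limit after a trivial rescaling, so the conclusion is unaffected.
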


\subsubsection{ONB of a $p$-adic Banach space}

A family $(e_i)_{i \in I}$ of elements of an $E$-Banach space $B$ is an {\it orthonormal basis} (ONB) of $B$ if the map
\begin{eqnarray}
  \label{isometry}
  \begin{split}
  l_\infty^0(I,E) \quad & \rightarrow & B \\
  (a_i)_{i \in I} \quad & \mapsto & \sum_{i \in I} a_i e_i
  \end{split}
\end{eqnarray}
is an isometry of $E$-Banach spaces. That is,
\begin{itemize}
  \item [i)] every element $x \in B$ can be written uniquely as a convergent series $x = \sum_{i \in I} a_i e_i$ with $|a_i|_p \rightarrow 0$, and
  \item [ii)] $v_B(x) = \inf_{i \in I} v_p(a_i)$.
\end{itemize}
A bit less stringently, a family $(e_i)_{i \in I}$ of elements in $B$
is (only) a {\it Banach basis} if the map displayed above is `only' an isomorphism
of Banach spaces, that is, i) holds but ii) is replaced by the weaker condition:
\begin{itemize}
  \item [ii)$'$] There exists a constant $C \geq 0$ such that $-C +  \inf_{i \in I} v_p(a_i) \leq v_B(x) \leq  C + \inf_{i \in I} v_p(a_i)$.
\end{itemize}
For example, if $I$ is a set and if $\delta_i = (a_j)$ with $a_j = \delta_{i,j}$, then $(\delta_i)_{i \in I}$
forms an ONB of $l^0_\infty(I,E)$.

\begin{Proposition}
  Say $v_p(E)$ is discrete and $\pi_E$ is a uniformizer. Then
  \begin{enumerate}
    \item [i)] Every $E$-Banach space has a Banach basis.
    \item [ii)] An $E$-Banach space possesses an ONB $\iff v_B(B) = v_p(E)$. Moreover, under this hypothesis if
      $B^0 = \{ x \in B \mid v_B(x) \geq 0 \}$, then a family of elements $(e_i)_{i \in I}$ in $B^0$ is an ONB of
      $B \iff (\bar{e}_i)_{i \in I}$ is an algebraic
      basis of the $k_E$-vector space $\bar{B} := B^0/\pi_E B^0$.
  \end{enumerate}
\end{Proposition}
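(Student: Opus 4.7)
The plan is to prove (ii) first, since it is the technical heart, and then deduce (i) by running the same argument coset-by-coset in $v_B(B)/v_p(E^{\ast})$.

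For the ``only if'' direction of (ii), an ONB $(e_i)_{i \in I}$ gives an isometry $\ell_\infty^0(I,E) \xrightarrow{\sim} B$, so $v_B(x) = \inf_i v_p(a_i)$ for $x = \sum a_i e_i \neq 0$, and this infimum is attained because $|a_i|_p \to 0$; hence $v_B(B) \subseteq v_p(E)$, with the reverse containment trivial. For ``if'', assume $v_B(B) = v_p(E)$. Discreteness together with this equality forces $\{x : v_B(x) > 0\} = \pi_E B^0$, so $\bar B = B^0/\pi_E B^0$ is genuinely a $k_E$-vector space. I would then pick a $k_E$-basis $(\bar e_i)_{i \in I}$ of $\bar B$ via Zorn, lift each $\bar e_i$ to $e_i \in B^0$, and claim that $(e_i)$ is an ONB.

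The verification splits into three pieces. For convergence and spanning, given $x \in B^0$ write $\bar x = \sum_i \bar a_i^{(0)} \bar e_i$ (a finite sum in $\bar B$), lift the coefficients to $a_i^{(0)} \in \mathcal{O}_E$, set $x_1 = \pi_E^{-1}\bigl(x - \sum a_i^{(0)} e_i\bigr) \in B^0$, and iterate; this yields $x = \sum_i a_i e_i$ with $a_i = \sum_{n \geq 0} \pi_E^n a_i^{(n)}$, converging since the tails lie in $\pi_E^N B^0$ with $N \to \infty$. For the isometry, the ultrametric inequality gives $v_B\bigl(\sum a_i e_i\bigr) \geq \inf v_p(a_i)$; for the reverse, if $v_B(x) > m := \inf v_p(a_i)$ then $\pi_E^{-m} x$ would lie in $\pi_E B^0$ and hence have image zero in $\bar B$, but its expansion shows its image equals $\sum \overline{\pi_E^{-m} a_i}\,\bar e_i \neq 0$ by $k_E$-linear independence. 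Uniqueness of the expansion then follows from the isometry. The last assertion of (ii)---that $(e_i) \subset B^0$ is an ONB iff $(\bar e_i)$ is a $k_E$-basis of $\bar B$---is then almost tautological: ``if'' is the construction just given, and for ``only if'' the ONB isometry lets one read off spanning and linear independence of $(\bar e_i)$ directly from expansions of elements of $B^0$ and nontrivial mod-$\pi_E$ relations, respectively.

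For (i), one no longer has $v_B(B) = v_p(E)$, but $v_B(B \setminus \{0\})$ is still a union of cosets of $v_p(E)$ in $\mathbb{R}$. The idea is to pick a set of representatives $\{c_\alpha\} \subset [0, v_p(\pi_E))$ of the occurring cosets, and for each $\alpha$ apply the previous construction to the $k_E$-vector space $\{v_B \geq c_\alpha\}/\{v_B > c_\alpha\}$: choose a $k_E$-basis, lift to elements $e_i \in B$ with $v_B(e_i) = c_\alpha$, and take the union of all such lifts as $(e_i)_{i \in I}$. The iterative procedure of (ii), run in parallel across cosets, produces for each $x \in B$ a convergent expansion $x = \sum a_i e_i$ with $a_i \to 0$ and $\inf_i v_p(a_i) \leq v_B(x) \leq \inf_i v_p(a_i) + v_p(\pi_E)$---exactly the Banach basis condition with distortion $C = v_p(\pi_E)$. (If needed, Proposition~\ref{Open map BS} gives a second route to boundedness of the inverse map.)

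The hard part is the sharp isometry step in (ii)---ensuring that the iteratively constructed expansion really detects $v_B(x)$ with no slack---which rests entirely on $(\bar e_i)$ remaining $k_E$-linearly independent after lift-and-reduce. The identity $\{v_B > 0\} = \pi_E B^0$ (equivalent to $v_B(B) \subseteq v_p(E)$) is precisely what makes the mod-$\pi_E$ reduction faithful, and it is exactly this identity that fails in (i), forcing the distortion $v_p(\pi_E)$ to appear there.
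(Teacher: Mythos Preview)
Your treatment of (ii) is essentially the same as the paper's: both directions of the second assertion are handled by the same lift-and-iterate construction with the isometry check resting on $k_E$-linear independence of the reductions, and the first assertion follows because the infimum is attained.

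For (i), however, you take a genuinely different route. The paper does not work coset-by-coset in $v_B(B)/v_p(E^\ast)$; instead it simply replaces $v_B$ by the equivalent valuation
\[
v'_B(x) \;=\; v_p(\pi_E)\cdot \left\lfloor \tfrac{v_B(x)}{v_p(\pi_E)} \right\rfloor,
\]
which is automatically $v_p(E)$-valued, and then applies (ii) verbatim to $(B,v'_B)$. An ONB for $v'_B$ is then a Banach basis for $v_B$ with distortion at most $v_p(\pi_E)$. This floor trick avoids the bookkeeping of tracking which coset each successive remainder lands in and of assembling basis elements of differing $v_B$-values into a single family; your approach works but requires managing the iteration across cosets, whereas the paper collapses all cosets to one in a single stroke. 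The trade-off is that your argument keeps more structural information visible (the graded pieces $\{v_B \geq c_\alpha\}/\{v_B > c_\alpha\}$), while the paper's is shorter and requires no new ideas beyond (ii).
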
  

\begin{proof}
  This proof is adapted from the proof of \cite[Proposition I.1.5]{Col10a} but reminds one of the proof given in Serre's 
  foundational article \cite{Ser62} which is also a good general reference.
  
  First note that we may assume $v_B(B) = v_p(E)$ by replacing $v_B$ by the equivalent valuation
  $v'_B$ defined by $v'_B(x) = v_p(\pi_E) \cdot \lfloor \frac{v_B(x)}{v_p(\pi_E)} \rfloor \in v_p(\pi_E) \cdot {\mathbb Z}$
  which is clearly $v_p(E)$-valued. If one proves ii), so that there is an ONB for $v'_B$, then this basis will be
  a Banach basis for $v_B$ (whose values differ from those of $v'_B$ by at most the constant $v_p(\pi_E)$), so i) will follow.

  Also note that the forward implication in the first statement in
  ii) is clear since $v_p$ is discrete on $E$ and so infimums are attained.
  So to prove ii), it suffices to show that if $v_B(B) = v_p(E)$, then for
    a family $(e_i)$ in $B^0$
  \begin{center}
    $(e_i)$ is an ONB of
    $B \iff (\bar{e}_i)$ is an algebraic basis of $\bar{B}$.
  \end{center}

  ($\Rightarrow$) Suppose $(e_i)$ is an ONB of $B$. 
  If $\bar{x} \in \bar{B}$ for $x \in B^0$,
  then $v_B(x) \geq 0$ so $x = \sum a_i e_i$ with $a_i \in {\mathcal O}_E$ and $a_i \rightarrow 0$. 
  Therefore $\bar{a}_i = 0$
  for $i \gg 0$. So $\bar{x} = \sum \bar{a}_i \bar{e}_i$ is a finite linear combination of the $\bar{e}_i$. So the $(\bar{e}_i)$
  form a spanning set of $\bar{B}$.

  If $\sum \bar{a}_i \bar{e}_i = 0$, with $\bar{a}_i \in k_E$, lifting to say $a_i \in {\mathcal O}_E$, then
  $x = \sum a_i e_i$ satisfies $\bar{x} = 0$, so $v_B(x) > 0$. But $v_B(x) = \inf v_p(a_i)$. So $v_p(a_i) >0$ for all $i$.
  So $\bar{a}_i = 0$ for all $i$. So the $(\bar{e}_i)$ are also linearly independent.   
  
  ($\Leftarrow$) Suppose 
  $(\bar{e}_i)$ 
  is an algebraic basis of $\bar{B}$.
  Let $s : k_E \rightarrow S \subset {\mathcal O}_E$
  be a system of representatives of $k_E$ with $s(0) = 0$. If $x \in B^0$ and $\bar{x}  = \sum \bar{a}_i \bar{e}_i \in \bar{B}$
  with $\bar{a}_i \in k_E$ almost all $0$, set $s(x) = \sum s(\bar{a}_i) e_i$. So $x - s(x) \in \pi_E B^0$.
  Construct by induction a sequence of elements $x_n$ of $B^0$ for $n \geq 0$ by
  setting $x_0 = x$ and $x_{n+1} = \frac{1}{\pi_E}(x_n - s(x_n))$ for $n \geq 0$.
  Then $$x = \sum_{n = 0}^k s(x_n) \pi_E^n + x_{k+1} \pi_E^{k+1}$$ for all $k \geq 0$. 
  Write $s(x_n) = \sum_{i \in I} s_{n,i} e_i$ with $s_{n,i} \in S$ (almost all $0$).
  Set $a_i = \sum_{n=0}^\infty s_{n,i} \pi_E^n$. Clearly $a_i \rightarrow 0$ since for $i$ sufficiently large compared to $n$
  the coefficient $s_{m,i}$ is equal to $0$ for all $m \leq n$. Note that $\sum_{i \in I} a_i e_i = x$.
  Thus the map \eqref{isometry} is surjective onto $B^0$, and therefore onto $B$ (by multiplying and later dividing
  by a sufficiently large power of $\pi_E$).

  Now suppose that $v_{l_\infty}(a_i) = 0$, i.e., $\inf v_p(a_i) = 0$. Then at least one $a_i$ is a unit, so that
  $\sum a_i e_i \neq 0 \mod \pi_EB^0$ since some $\bar{a}_i \neq 0$ and the $\bar{e}_i$ form a basis of $\bar{B}$.
  This implies that $0 \leq v_B(\sum a_i e_i) \lneq v_p(\pi_E)$. But $v_B(B) = v_p(E)$. So we must have
  $v_B(\sum a_i e_i) = 0$. So \eqref{isometry} is an isometry (and is injective).
\end{proof}

\subsubsection{Dual of an $E$-Banach}

We recall the notion of the dual space $B^*$ of an $E$-Banach space $B$: 
it is the $E$-vector space of linear forms $f : B \rightarrow E$ with (the strong) topology defined by the valuation
\begin{eqnarray*}
  \label{dual val}
  v_{B^*}(f) = \inf_{x\in B\setminus \{0\}} \left( v_p(f(x)) - v_B(x) \right)
\end{eqnarray*}
with respect to which it is complete.

\begin{Proposition} Let $I$ ($= {\mathbb N}_{\geq 0}$) be an indexing set.
  \begin{enumerate}
  \item [i)] If $a = (a_i)_{i \in I} \in l^0_\infty(I,E)$, $b = (b_i)_{i \in I} \in l_\infty(I,E)$, then
    the series $$f_b(a) = \sum_{i \in I} a_i b_i$$
    converges in $E$.
  \item [ii)] The map
    \begin{eqnarray*}
      l_\infty(I,E) & \rightarrow & l_\infty^0(I,E)^* \\
           b       &  \mapsto    &  f_b                        
    \end{eqnarray*}
    is an isometry.
  \end{enumerate}
\end{Proposition}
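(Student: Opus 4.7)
The plan is to exploit the fact, noted just above the proposition, that the family $(\delta_i)_{i \in I}$ is an orthonormal basis of $l_\infty^0(I,E)$; this gives immediate access both to the strong valuation on the dual (by testing on the $\delta_i$) and to expansions of arbitrary vectors as convergent series (for constructing the inverse map).

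For part (i), I would observe that since $b \in l_\infty(I,E)$ there is a constant $C$ with $v_p(b_i) \geq -C$ for all $i$, while $a \in l_\infty^0(I,E)$ forces $v_p(a_i) \to \infty$. Hence $v_p(a_i b_i) \geq v_p(a_i) - C \to \infty$, so the partial sums of $\sum_i a_i b_i$ form a Cauchy sequence in the complete field $E$, and the series converges.

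For part (ii), linearity in $b$ is transparent, so it remains to compute $v_{(l_\infty^0)^*}(f_b)$ and establish surjectivity. The ultrametric inequality gives
$$v_p(f_b(a)) \;\geq\; \inf_i \bigl(v_p(a_i) + v_p(b_i)\bigr) \;\geq\; v_{l_\infty^0}(a) + v_{l_\infty}(b),$$
so by the defining formula for the dual valuation, $v_{(l_\infty^0)^*}(f_b) \geq v_{l_\infty}(b)$. The reverse inequality is obtained by testing on the basis: since $v_{l_\infty^0}(\delta_i) = 0$ and $f_b(\delta_i) = b_i$, the dual valuation is at most $v_p(b_i)$ for every $i$, hence at most $v_{l_\infty}(b)$. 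This proves that $b \mapsto f_b$ is an isometric embedding, so in particular injective. For surjectivity, given $f \in l_\infty^0(I,E)^*$ set $b_i := f(\delta_i)$; continuity of $f$ gives $v_p(b_i) \geq v_{(l_\infty^0)^*}(f)$, so $b \in l_\infty(I,E)$. Any $a \in l_\infty^0(I,E)$ expands as the convergent series $a = \sum_i a_i \delta_i$ (this is precisely the ONB property), and applying the continuous linear map $f$ term by term yields $f(a) = \sum_i a_i b_i = f_b(a)$, so $f = f_b$.

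The proof is essentially bookkeeping and I do not anticipate a real obstacle. The only delicate point is the interchange $f\bigl(\sum_i a_i \delta_i\bigr) = \sum_i a_i f(\delta_i)$, which relies on continuity of $f$ together with the density of finitely supported sequences in $l_\infty^0(I,E)$; this density fails in $l_\infty(I,E)$, which explains why the dual space on the right-hand side of the proposition must be that of the smaller space $l_\infty^0(I,E)$ rather than $l_\infty(I,E)$ itself.
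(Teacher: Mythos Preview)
Your proof is correct and follows essentially the same approach as the paper: the ultrametric estimate for one inequality, testing on the $\delta_i$ for the other, and density of finitely supported sequences (equivalently, the ONB expansion) for surjectivity. Your organization is in fact slightly cleaner, since by establishing both valuation inequalities up front you avoid the paper's detour through the Open Mapping Theorem, which is rendered redundant anyway once the isometry is verified directly.
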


\begin{proof}
  i) is obvious since $a_i \rightarrow 0$ and $(b_i)$ bounded implies $a_i b_i \rightarrow 0$.
  We remark that if $a_j = (a_{i,j}) \rightarrow 0$ as $j \rightarrow 0$, then clearly $f_b(a_j) \rightarrow 0$ as well
  so $f_b$ is continuous and the map in ii) is well defined. The map in ii) is also continuous. Indeed
  \begin{eqnarray*}
    v_{ l_\infty^0(I,E)^*}(f_b) = \inf_{(a_i) \in  l_\infty^0(I,E) \setminus \{0\}} \left( v_p(f_b(a_i)) - v_{l_\infty}(a_i) \right)
    \geq \inf_{(a_i)} \left(  \inf_i (v_p(b_i) + v_p(a_i)) - \inf_i v_p(a_i) \right) \\
    \geq \inf_{(a_i)} \left(  \inf_i v_p(b_i) + \inf_i v_p(a_i) - \inf_i v_p(a_i) \right)  
    =  v_{l_\infty} (b). \qquad \qquad
   \end{eqnarray*}
  Injectivity of the map in ii) is also obvious (if $f_b = 0$, then $b_i = f_b(\delta_i) = 0$ for all $i \in I$,
  so $b = 0$). For surjectivity, let $f \in  l_\infty^0(I,E)^*$.
  If $b_i := f(\delta_i)$, then $v_p(b_i) = v_p(b_i) - v_{l_\infty}(\delta_i) \geq v_{ l_\infty^0(I,E)^*}(f)$
  so $b = (b_i) \in l_\infty(I,E)$ is bounded. Moreover $(f-f_b)(\delta_i) = b_i - b_i = 0$. But the space
  generated by the $\delta_i$ is dense in $l_\infty^0(I,E)$, and $f$ continuous implies $f - f_b$ is
  continuous, so must be identically $0$.  So $f = f_b$ and the map is surjective. It follows from 
  Proposition~\ref{Open map BS}, part i) that the map in ii) is open. Finally, the map in ii) is an isometry.
  Indeed, if $v_{l_\infty}(b) = 0$, then by the continuity proved above $v_{ l_\infty^0(I,E)^*}(f_b) \geq 0$. But also, there
  is a $b_i$ which must be a unit. Then $v_p(f_b(\delta_i)) = v_p(b_i) = 0$ and since $v_{l_\infty} (\delta_i) = 0$
  the infimum $v_{ l_\infty^0(I,E)^*}(f_b)$ of the difference over all non-zero $(a_i)$ is at most $0$, hence is $0$. 
\end{proof}

\subsection{Continuous functions on $\Zp$}

Let $\sC^0(\Zp,E) = \{ g : \Zp \rightarrow E \mid g \text{ is continuous}\}$. Since $\Zp$ is compact,
$g(\Zp)$ is bounded if $g \in \sC^0(\Zp,E)$, so $v_{\sC^0}(g) = \inf_{x \in \Zp} v_p(g(x))$ makes sense
and makes $\sC^0(\Zp,E)$ into an $E$-Banach space.

\subsubsection{Binomial polynomials}

For $n \in {\mathbb N}_{\geq 0}$, let $${x \choose n} = \begin{cases}
                                                            1 & \text{if } n = 0, \\
                                                            \dfrac{x(x-1)\cdots(x-n+1)}{n!} & \text{if } n \geq 1.
                                                          \end{cases} $$

\begin{Proposition}
\label{val 0}
  $v_{\sC^0}({x \choose n}) = 0$ for all $n \geq 0$.
\end{Proposition}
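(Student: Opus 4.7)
The plan is to unpack the definition: we need to show $\inf_{x \in \Zp} v_p\!\left(\binom{x}{n}\right) = 0$, which amounts to the two inequalities (i) $v_p\!\left(\binom{x}{n}\right) \geq 0$ for every $x \in \Zp$, and (ii) some $x \in \Zp$ achieves $v_p = 0$.

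For (ii), the obvious choice is $x = n$, since $\binom{n}{n} = 1$, so $v_p(1) = 0$. This immediately forces the infimum to be at most $0$, so the whole content of the proposition lies in (i). (Equivalently, one can take $x = 0$ for $n \geq 1$ won't work since that gives $0$; $x = n$ is the right pick.)

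For (i), the idea is a density argument. The function $x \mapsto \binom{x}{n} = \frac{x(x-1)\cdots(x-n+1)}{n!}$ is a polynomial in $x$, hence continuous on $\Zp$ with values in $\Qp$. On the dense subset $\mathbb{N} \subset \Zp$, the value $\binom{m}{n}$ is a nonnegative integer (it counts subsets), so lies in $\Zp$; in particular $v_p\!\left(\binom{m}{n}\right) \geq 0$ for all $m \in \mathbb{N}$. Since $\Zp$ is closed in $\Qp$ and the polynomial is continuous, the image $\binom{\Zp}{n}$ is contained in the closure of $\binom{\mathbb{N}}{n} \subset \Zp$, hence in $\Zp$. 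This gives $v_p\!\left(\binom{x}{n}\right) \geq 0$ for all $x \in \Zp$.

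Combining (i) and (ii), the infimum is exactly $0$, so $v_{\sC^0}\!\left(\binom{x}{n}\right) = 0$. There is no serious obstacle here; the only subtlety is remembering that the continuity of the polynomial map $\Zp \to \Qp$ together with $\Zp$-valued behavior on $\mathbb{N}$ forces $\Zp$-valued behavior on all of $\Zp$, which is the classical fact that the binomial polynomials preserve $\Zp$.
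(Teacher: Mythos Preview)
Your proof is correct and follows essentially the same approach as the paper: both establish integrality via the continuity of the polynomial and its integer values on $\mathbb{N}$ (the paper uses $\mathbb{Z}$), then pin down the value $0$ using $\binom{n}{n}=1$.
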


\begin{proof}
  The polynomial ${x \choose n}$ maps ${\mathbb Z}$ to ${\mathbb Z}$ and is continuous (it is a polynomial!), so
  takes $\Zp$ to $\Zp$, so certainly  $v_{\sC^0}({x \choose n}) \geq 0$. But ${n \choose n} = 1$,
  so  $v_{\sC^0}({x \choose n}) = 0$.
\end{proof}

\subsubsection{Mahler coefficients of continuous functions}

For $g \in \sC^0(\Zp,E)$, define $g^{[0]} = g$ and $g^{[k+1]}(x) =  g^{[k]}(x+1) - g^{[k]}(x)$ for $k \geq 0$.
An easy check shows that
\begin{eqnarray}
  \label{binomial derivative}
  {x \choose n}^{[k]} = \begin{cases}
                                    {x \choose n-k} & \text{for } 0 \leq k \leq n, \\
                                    0                       & \text{for } k \geq n.
                                  \end{cases}
\end{eqnarray}

Also define the {\it $n$-th Mahler coefficient} of $g$ to be
$$a_n(g) = g^{[n]}(0).$$   
Then one may check
\begin{eqnarray}
    g^{[n]}(x) & = &  \sum_{i=0}^n (-1)^i {n \choose i} g(x+n-i), \text{ so} \label{derivative formula} \\   
    a_n(g)  & = &  \sum_{i=0}^n (-1)^i {n \choose i} g(n-i). \label{coeff formula}
\end{eqnarray}

  \begin{lemma}
    $g \in \sC^0(\Zp,E) \implies v_{\sC^0}(g^{[p^k]}) \geq v_{\sC^0}(g) + 1$ for some $k \in {\mathbb N}_{\geq 0}$.
  \end{lemma}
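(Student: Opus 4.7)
The plan is to exploit two facts: the middle binomial coefficients $\binom{p^k}{i}$ are divisible by $p$, and a continuous function on the compact space $\Zp$ is uniformly continuous, so its values at nearby points agree modulo high powers of $p$.

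First I would normalize: by homogeneity (property iii) of the valuation), replacing $g$ by $\pi_E^{-m} g$ for suitable $m$, I may assume $v_{\sC^0}(g) = 0$, i.e., $g$ takes values in $\co_E$, and the goal becomes $v_{\sC^0}(g^{[p^k]}) \geq 1$ for some $k$. Next I apply formula \eqref{derivative formula} with $n = p^k$:
\[
g^{[p^k]}(x) \;=\; \sum_{i=0}^{p^k} (-1)^i \binom{p^k}{i}\, g\bigl(x + p^k - i\bigr).
\]
The standard fact (Kummer's theorem, or a direct valuation count of $p^k!/(i!(p^k-i)!)$) is that $v_p\bigl(\binom{p^k}{i}\bigr) \geq 1$ for all $1 \leq i \leq p^k - 1$. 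Thus every middle term of the sum above lies in $\pi_E\co_E$ (since $g(\Zp) \subset \co_E$), and reducing modulo $\pi_E$ only the boundary terms $i=0$ and $i=p^k$ survive, giving
\[
g^{[p^k]}(x) \;\equiv\; g(x + p^k) + (-1)^{p^k} g(x) \pmod{\pi_E}.
\]
Since $p \geq 5$ is odd, $(-1)^{p^k} = -1$, so the right-hand side is $g(x + p^k) - g(x) \pmod{\pi_E}$.

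It remains to show this last difference is divisible by $\pi_E$ uniformly in $x \in \Zp$, once $k$ is large enough. This is exactly uniform continuity of $g$ on the compact space $\Zp$: for the open cover by balls of radius sufficient to force $v_p(g(y) - g(x)) \geq 1$, a finite subcover exists, and for $k$ so large that $p^k$ is smaller than a Lebesgue number of that cover, we get $v_p(g(x + p^k) - g(x)) \geq 1$ for all $x \in \Zp$. Combining with the congruence above yields $v_p(g^{[p^k]}(x)) \geq 1$ uniformly in $x$, i.e., $v_{\sC^0}(g^{[p^k]}) \geq 1$, as desired.

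There is no serious obstacle here: the argument is essentially Mahler-style and hinges entirely on the congruence $\binom{p^k}{i} \equiv 0 \pmod{p}$ for $0 < i < p^k$ together with uniform continuity. The only point requiring any care is the sign $(-1)^{p^k}$, which is harmless under the blanket hypothesis $p \geq 5$. The lemma will then feed directly into the proof of Mahler's theorem that the binomial polynomials $\binom{x}{n}$ form an orthonormal basis of $\sC^0(\Zp, E)$, by setting up an iterative argument: repeatedly subtracting off $a_n(g) \binom{x}{n}$ and applying this lemma shows the Mahler coefficients $a_n(g)$ tend to zero at the required rate.
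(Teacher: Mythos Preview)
Your approach is essentially identical to the paper's: split the sum from \eqref{derivative formula} into the boundary terms $i=0,\,p^k$ and the middle terms, use $p \mid \binom{p^k}{i}$ for $0 < i < p^k$ on the middle, and uniform continuity on the difference $g(x+p^k) - g(x)$. The paper does not normalize to $v_{\sC^0}(g) = 0$ but works directly with the bound $v_{\sC^0}(g) + 1$; this is purely cosmetic.

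There is one imprecision worth flagging. After normalizing you want $v_{\sC^0}(g^{[p^k]}) \geq 1$, but you reduce the middle terms only modulo $\pi_E$. If $E/\Qp$ is ramified (and it is in this paper, since $\sqrt{p} \in E$), then $v_p(\pi_E) < 1$, so the congruence $g^{[p^k]}(x) \equiv g(x+p^k) - g(x) \pmod{\pi_E}$ together with $v_p(g(x+p^k) - g(x)) \geq 1$ only yields $v_p(g^{[p^k]}(x)) \geq v_p(\pi_E)$, not $\geq 1$. The fix is immediate and already implicit in what you wrote: you observed $v_p\binom{p^k}{i} \geq 1$, so the middle terms themselves satisfy $v_p \geq 1$, not merely $v_p \geq v_p(\pi_E)$. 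Work modulo $p$ rather than modulo $\pi_E$, or simply bound each piece by $v_p \geq 1$ directly, as the paper does.
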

  
  \begin{proof}
     Since $\Zp$ is compact, $g$ is uniformly continuous. So there exists $k \geq 0$ such that 
     $v_p(g(x+p^k) - g(x)) \geq v_{\sC^0}(g) + 1$  for all $x \in \zp$. Fix $x \in \Zp$.
     By \eqref{derivative formula} and adding and subtracting $g(x)$ we can write
     $$g^{[p^k]}(x) = \left( g(x+p^k) - g(x) \right)
                            + \left( \sum_{i=1}^{p^k-1} (-1)^i {p^k \choose i} g(x+p^k-i) \right)
                            + \left( (1+(-1)^{p^k}) g(x) \right).$$
     Now the valuation $v_p$ of each of the terms in parentheses on the right is at least $$v_{\sC^0}(g) + 1,$$ the 
     first by what we just deduced above,
     the second since $p | {p^k \choose i}$ for $i \neq 0, p^k$, and the last if $p = 2$ (though this term 
     vanishes if $p$ is odd).
     This proves that the valuation $v_p$ of the term on the left is also bounded below by the 
     above quantity, from which we deduce the
     lemma by varying over all $x \in \Zp$.
\end{proof}

\begin{theorem} \label{mahler}
  \begin{enumerate}
  \item [i)] If $g \in \sC^0(\Zp,E)$, then
    \begin{itemize}
    \item [a)] $a_n(g) \rightarrow 0$, and
    \item [b)] $\sum_{n=0}^\infty a_n(g) {x \choose n} = g(x)$, for all $x \in \Zp$.
    \end{itemize}
  \item [ii)] The map
    \begin{eqnarray*}
      \sC^0(\Zp,E) & \rightarrow & l_\infty^0({\mathbb N}_{\geq 0}, E) \\
      g          & \mapsto     & (a_n(g))_{n \geq 0}
    \end{eqnarray*}
    is an isometry.
  \end{enumerate}
\end{theorem}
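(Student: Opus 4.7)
The plan is to establish the three statements in order by iterating the preceding decay lemma, then applying the finite difference operator $[\,\cdot\,]$ to the candidate series.

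For part (i.a), I would iterate the preceding lemma. Starting from $g$, the lemma yields some $k_1 \geq 0$ with $v_{\sC^0}(g^{[p^{k_1}]}) \geq v_{\sC^0}(g) + 1$; applying the lemma again to $g^{[p^{k_1}]}$ produces $k_2$ with $v_{\sC^0}(g^{[p^{k_1} + p^{k_2}]}) \geq v_{\sC^0}(g) + 2$, and so on, using the identity $(g^{[m]})^{[n]} = g^{[m+n]}$. Thus for every $N$ there exists $M_N$ (a sum of powers of $p$) with $v_{\sC^0}(g^{[M_N]}) \geq v_{\sC^0}(g) + N$. The key observation is that whenever $v_{\sC^0}(g^{[M]}) \geq C$, then for every $n \geq M$, formula \eqref{coeff formula} applied to $g^{[M]}$ (together with the identity $a_n(g) = g^{[n]}(0) = (g^{[M]})^{[n-M]}(0)$) writes $a_n(g)$ as an integer combination of values of $g^{[M]}$, hence $v_p(a_n(g)) \geq C$. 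Iterating, $v_p(a_n(g)) \to \infty$, proving (i.a).

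For part (i.b), define $h(x) := \sum_{n \geq 0} a_n(g) \binom{x}{n}$, which converges uniformly on $\Zp$ by Proposition~\ref{val 0} and part (i.a), and therefore lies in $\sC^0(\Zp,E)$. Using \eqref{binomial derivative} term-by-term, $h^{[k]}(x) = \sum_{n \geq k} a_n(g) \binom{x}{n-k}$; evaluating at $x = 0$, only the $n = k$ term survives, so $a_k(h) = a_k(g)$ for every $k \geq 0$. Setting $f := g - h$, every Mahler coefficient of $f$ vanishes. By \eqref{coeff formula}, an immediate induction on $m$ gives $f(m) = 0$ for every $m \in \mathbb{N}_{\geq 0}$: the base case is $f(0) = a_0(f) = 0$, and inductively the formula $a_m(f) = f(m) + \sum_{i \geq 1} (-1)^i \binom{m}{i} f(m-i)$ forces $f(m) = 0$. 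Since $\mathbb{N}_{\geq 0}$ is dense in $\Zp$ and $f$ is continuous, $f \equiv 0$, i.e., $g = h$.

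For part (ii), the isometry is now immediate from both formulas. On the one hand, \eqref{coeff formula} shows $v_p(a_n(g)) \geq v_{\sC^0}(g)$ for every $n$, giving $\inf_n v_p(a_n(g)) \geq v_{\sC^0}(g)$. On the other hand, the expansion of (i.b) together with $\binom{x}{n} \in \Zp$ yields $v_p(g(x)) \geq \inf_n v_p(a_n(g))$ for all $x \in \Zp$, and taking the infimum over $x$ gives the reverse inequality. Surjectivity onto $l^0_\infty(\mathbb{N}_{\geq 0}, E)$ is also immediate: given $(a_n)$ null, the sum $\sum_n a_n \binom{x}{n}$ defines a continuous function whose Mahler coefficients are exactly $(a_n)$ by the same computation as in (i.b).

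The main obstacle is part (i.a): one must combine the uniform continuity of $g$ (captured in the preceding lemma) with the $p$-divisibility of intermediate binomial coefficients $\binom{p^k}{i}$ to produce decay of the iterated differences, and then transfer this uniform decay to pointwise decay of $a_n(g)$ via the key identity $a_n(g) = (g^{[M]})^{[n-M]}(0)$. Once this decay is in hand, parts (i.b) and (ii) follow from essentially formal manipulation of the Mahler series and the finite difference operator.
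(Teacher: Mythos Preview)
Your proposal is correct and follows essentially the same approach as the paper: iterate the decay lemma to obtain $v_{\sC^0}(g^{[M]}) \geq C$ for suitable $M$, transfer this to $v_p(a_n(g)) \geq C$ for $n \geq M$ via the identity $a_n(g) = a_{n-M}(g^{[M]})$, build the candidate series, verify it has the same Mahler coefficients, and conclude by density of $\mathbb{N}_{\geq 0}$ in $\Zp$. The only cosmetic difference is that the paper establishes injectivity and surjectivity of the map in (ii) first and then deduces (i.b) as the statement $g = g_{a(g)}$, whereas you prove (i.b) directly and extract surjectivity afterward; the underlying computations are identical.
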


\begin{proof}   
     The proof here adapted from the proof of \cite[Theorem I.2.3]{Col10b}; another good exposition 
     can be found in Hida's text book \cite{Hid93}.

     Let $g \in \sC^0(\Zp,E)$. By repeated use of the lemma, and the obvious 
     fact that $g^{[l+k]} = (g^{[l]})^{[k]}$ for $l,k \geq 0$, we see that there are $k_1, k_2, \ldots, k_m$ such that
     $v_{\sC^0}(g^{[p^{k_1} + \cdots + p^{k_m}]}) \geq v_{\sC^0}(g) + m$. Taking $m$ sufficiently
     large, we see that given $C \geq 0$, there exists $N$ such that $v_{\sC^0}(g^{[N]}) \geq C$.
     Then  by \eqref{coeff formula} we have
     $v_p(a_n(g)) = v_p(a_{n-N}(g^{[N]})) \geq v_{\sC^0}(g^{[N]}) \geq C$ for $n \geq N$. 
     We deduce that $a_n(g) \rightarrow 0$. This proves i) a).
     
   This shows that the map in ii) 
  \begin{eqnarray*}
    \sC^0(\Zp,E) & \rightarrow & l_\infty({\mathbb N}_{\geq 0},E) \\
      g          & \mapsto     & a(g) := (a_n(g))_{n \geq 0}
  \end{eqnarray*}
  is well defined.
  It is also clearly injective since  if the Mahler coefficients $a_n(g)$ of $g$ vanish, then by a recursive use of 
  \eqref{coeff formula} the
  continuous function $g$ vanishes on the dense set ${\mathbb N}_{\geq 0} \subset \zp$ and hence $g = 0$.
  The map is also continuous since again by \eqref{coeff formula}
  \begin{eqnarray}
    \label{A}
    v_{l_\infty} (a(g)) = \inf_{n \geq 0} v_p(a_n(g)) \geq   \inf_{n \geq 0, 0 \leq i \leq n} v_p(g(n-i)) \geq \inf_{x \in \Zp} v_p(g(x))
  = v_{\sC^0}(g).
  \end{eqnarray}
  We prove the surjectivity of the above map onto $l_\infty^0({\mathbb N}_{\geq 0}, E)$. We first claim that if
  $a = (a_n)_{n \geq 0} \in l_\infty^0({\mathbb N}_{\geq 0}, E)$, then
  $$s_\infty(x) = \sum_{k = 0}^\infty a_k {x \choose k}$$ 
  is a continuous function of $x \in \Zp$. First note that the sequence of partial sums $s_n(x)$
  converges uniformly to $s_\infty(x)$ on $\Zp$. Indeed, given $C \geq 0$, there exists $N_C$ such that
  $$v_p(s_n(x) - s_\infty(x)) = v_p \left( \sum_{k=n+1}^\infty a_k {x \choose k} \right) \geq
  \inf_{k \geq n+1} (v_p(a_k) + v_{\sC^0}({x \choose k}))
  \geq  \inf_{k \geq n+1} v_p(a_k) \geq C$$
  for all $n \geq N_C$, since $a_k \rightarrow 0$, and where we have used Proposition~\ref{val 0}.
  A uniform limit of continuous functions is continuous, so $g_a := s_\infty : \zp \rightarrow E$ is continuous.
  Now, by \eqref{binomial derivative}, $g_a^{[n]}(0) = a_n$, so under the above map $g_a \mapsto a$,
  and so the above map is surjective onto $l_\infty^0({\mathbb N}_{\geq 0}, E)$.
  
  Now if $g \in \sC^0(\zp,E)$, then $g$ and $g_{a(g)}$ have the same  Mahler coefficients
  so must be equal since the map above is injective. Thus  $g$ satisfies i) b). 
  
  Note that 
  \begin{eqnarray}
    \label{B}
      v_{\sC^0}(g_a) \geq \inf_{n \geq 0} v_{\sC^0}(a_n {x \choose n}) \geq \inf_{n \geq 0} v_p(a_n) = v_{l_\infty}(a)
  \end{eqnarray}
  by Proposition~\ref{val 0}.
  So if $g \in \sC^0(\zp,E)$, then by \eqref{A} and \eqref{B}, we have
  $$v_{l_\infty}(a(g)) \geq v_{\sC^0}(g) = v_{\sC^0}(g_{a(g)}) \geq v_{l_\infty} (a(g)).$$
  So $v_{l_\infty}(a(g)) = v_{\sC^0}(g)$ and the map in ii) is an isometry. 
\end{proof}

\begin{Corollary}
  The ${x \choose n}$ for $n \geq 0$ form an ONB of $\sC^0(\Zp,E)$.
\end{Corollary}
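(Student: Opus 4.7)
The plan is to observe that the corollary is essentially a rephrasing of Theorem~\ref{mahler}, translated through the definition of an ONB given in the subsection on orthonormal bases. Recall that $(e_i)_{i \in I}$ is an ONB of an $E$-Banach space $B$ precisely when the map $l^0_\infty(I,E) \to B$ sending $(a_i) \mapsto \sum_i a_i e_i$ is an isometry. So to prove the corollary, I just need to show that the map
\[
\Phi : l^0_\infty({\mathbb N}_{\geq 0}, E) \longrightarrow \sC^0(\Zp, E), \qquad (a_n)_{n \geq 0} \longmapsto \sum_{n=0}^\infty a_n {x \choose n}
\]
is a well-defined isometry.

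First, I would note that $\Phi$ is well-defined and lands in $\sC^0(\Zp, E)$: this is exactly the content of the construction of $g_a$ in the proof of Theorem~\ref{mahler}, where the partial sums converge uniformly because $v_p(a_k) \to \infty$ and $v_{\sC^0}({x \choose k}) \geq 0$ by Proposition~\ref{val 0}. Next, the map in part ii) of Theorem~\ref{mahler}, namely $\Psi : g \mapsto (a_n(g))$, is an isometry from $\sC^0(\Zp,E)$ onto $l^0_\infty({\mathbb N}_{\geq 0}, E)$, so it is a bijection. Part i) b) of the same theorem states that $\Psi(\Phi((a_n))) = (a_n)$ for any null sequence $(a_n)$ (since the Mahler coefficients of $\sum a_n \binom{x}{n}$ are, by \eqref{binomial derivative}, exactly the $a_n$). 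Therefore $\Phi = \Psi^{-1}$, and so $\Phi$ is itself an isometry.

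Since $\Phi$ is an isometric bijection with $\Phi(\delta_n) = \binom{x}{n}$, by the definition recalled above the family $\left( \binom{x}{n} \right)_{n \geq 0}$ is an ONB of $\sC^0(\Zp, E)$. There is no real obstacle here; the only thing to be careful about is that both directions of the isometry (computing Mahler coefficients, and summing the Mahler series) are furnished by Theorem~\ref{mahler}, so the corollary truly is immediate.
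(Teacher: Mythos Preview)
Your proof is correct and takes essentially the same approach as the paper, which simply states the result as an immediate corollary of Theorem~\ref{mahler} without further argument. One minor quibble: part i) b) of Theorem~\ref{mahler} actually gives $\Phi\circ\Psi=\id$ rather than $\Psi\circ\Phi=\id$, but your parenthetical remark using \eqref{binomial derivative} correctly establishes the latter, and in any case either identity suffices once $\Psi$ is known to be bijective.
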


\subsection{Wavelet decompositions of continuous functions}

\subsubsection{Locally constant functions}
Let 
\begin{eqnarray*}
  \mathrm{LC}_h(\zp,E) & = & \{ g : \zp \rightarrow E \mid g |_{a + p^h \zp} \text{ is constant for all } a \in \zp \}. \\
  \mathrm{LC}(\zp,E) & =  & \bigcup_{h \geq 0} \mathrm{LC}_h(\zp,E).
\end{eqnarray*}

We start with the following well-known fact.

\begin{lemma}
  \label{LC dense in C^0}
  $\mathrm{LC}(\Zp,E) \subset \sC^0(\zp,E)$ is dense.
\end{lemma}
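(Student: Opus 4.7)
The plan is to prove density in the standard way by exploiting the compactness of $\Zp$. Given $g \in \sC^0(\Zp,E)$ and a constant $C \geq 0$, I want to produce a locally constant function $f$ such that $v_{\sC^0}(g - f) \geq C$, which is exactly what density with respect to the valuation $v_{\sC^0}$ requires.

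First I would invoke the fact that any continuous function on the compact space $\Zp$ is uniformly continuous. Concretely, given $C$, there exists an integer $h \geq 0$ such that whenever $x, y \in \Zp$ satisfy $v_p(x - y) \geq h$, we have $v_p(g(x) - g(y)) \geq C$. This $h$ exists because the open cover of $\Zp$ by the preimages $g^{-1}(y + p^C\co_E)$ as $y$ ranges over $E$ admits a finite subcover (or, equivalently, a Lebesgue-number style argument using the cosets $a + p^k\Zp$), and any single value of $h$ large enough will work for all $x$ simultaneously.

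Next I would explicitly define a candidate locally constant approximation. Choose a system of representatives, say $\{0, 1, \ldots, p^h - 1\}$, for $\Zp / p^h\Zp$, and define $f : \Zp \to E$ by setting $f(x) = g(a)$ where $a$ is the unique representative with $x \equiv a \pmod{p^h}$. By construction $f$ is constant on each coset $a + p^h\Zp$, so $f \in \mathrm{LC}_h(\Zp, E) \subset \mathrm{LC}(\Zp,E)$. For any $x \in \Zp$, with $a$ its representative, we have $v_p(x - a) \geq h$, so by uniform continuity $v_p(g(x) - f(x)) = v_p(g(x) - g(a)) \geq C$. Taking the infimum over $x \in \Zp$ gives $v_{\sC^0}(g - f) \geq C$, which shows that $g$ lies in the closure of $\mathrm{LC}(\Zp,E)$ in $\sC^0(\Zp,E)$.

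There is really no obstacle here; the whole argument reduces to uniform continuity plus an explicit truncation modulo $p^h$. I note in passing that one cannot replace this with a Mahler-style approximation argument (Theorem \ref{mahler}), because the truncated Mahler partial sums $\sum_{n=0}^N a_n(g)\binom{x}{n}$ are polynomials, not locally constant functions. So the uniform continuity approach is the natural one.
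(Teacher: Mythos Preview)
Your proof is correct and is essentially identical to the paper's: both invoke uniform continuity to find $h$ (the paper calls it $m$), then define the locally constant approximation by $f(x) = g(a)$ for $a$ the representative in $\{0,\ldots,p^h-1\}$ of $x \bmod p^h$ (the paper writes this as $g_m(x) = \sum_{i=0}^{p^m-1} g(i)\mathbbm{1}_{i+p^m\Zp}(x)$), and conclude $v_{\sC^0}(g-f)\geq C$.
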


\begin{proof}
  Let $g \in \sC^0(\zp,E)$. Then $g$ continuous, $\zp$ compact $\implies g$ is uniformly continuous, so for all $C \geq 0$, 
  there exists an $m \geq 0$ such that $v_p(x-y) \geq m \implies v_p(g(x) - g(y)) \geq C$. Let
  $$g_m(x) = \sum_{i = 0}^{p^m - 1} g(i) {\mathbbm 1}_{i + p^m \Zp}(x).$$
  If $x \in \Zp$, then there exists a unique $0 \leq i \leq p^{m}-1$ such that $x \in i + p^m \zp$, so 
  $v_p(g(x) - g_m(x)) = v_p(g(x) - g(i)) \geq C$. Then 
  $v_{\sC^0}(g-g_m) = \inf_{x \in \zp} v_p(g(x) - g_m(x)) \geq C$. 
\end{proof}

If $i \in {\mathbb N}_{\geq 0}$, set $l(i)$ to be the smallest $n \geq 0$ such that $p^n > i$.
So $l(0) = 0$ and $l(i) = \lfloor \frac{\log i}{\log p} \rfloor + 1$.

\begin{Proposition}
  \label{LC basis}
  We have
  \begin{enumerate}
    \item The ${\mathbbm 1}_{i + p^{l(i)} \zp}$ for $0 \leq i \leq p^h-1$ form a basis of  $\mathrm{LC}_h(\zp,E)$.
    \item The ${\mathbbm 1}_{i + p^{l(i)} \zp}$ for $i \geq 0$ form a basis of  $\mathrm{LC}(\zp,E)$.
    \item The ${\mathbbm 1}_{i + p^{l(i)} \zp}$ for $i \geq 0 $ form an ONB of  $\sC^0(\zp,E)$.
  \end{enumerate}
\end{Proposition}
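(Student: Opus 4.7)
\medskip

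\noindent\textbf{Proposal.} The plan is to prove the three parts in order, with parts 2 and 3 following quickly from part 1 once the right conceptual reformulations are in place.

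\medskip

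\noindent For part (1), I will argue by induction on $h$. The base case $h = 0$ is trivial since $\mathrm{LC}_0(\zp, E) = E \cdot \mathbbm{1}_{\zp}$ and the only index is $i = 0$ with $l(0) = 0$. For the inductive step from $h$ to $h+1$, note that the new indices $p^h \leq i < p^{h+1}$ all satisfy $l(i) = h+1$, so each new indicator $\mathbbm{1}_{i + p^{h+1}\zp}$ is already a member of the \emph{standard} basis $\{\mathbbm{1}_{a + p^{h+1}\zp}\}_{0 \leq a < p^{h+1}}$ of $\mathrm{LC}_{h+1}(\zp, E)$. The dimensions match: $p^h + (p^{h+1} - p^h) = p^{h+1}$. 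It suffices to show spanning. Given $0 \leq a < p^h$, the relation
\[
\mathbbm{1}_{a + p^{h+1}\zp} \;=\; \mathbbm{1}_{a + p^h \zp} \;-\; \sum_{j=1}^{p-1} \mathbbm{1}_{(a + jp^h) + p^{h+1}\zp}
\]
expresses the missing standard basis vector as an element of $\mathrm{LC}_h(\zp, E)$ (which is spanned by the old basis by induction) minus new basis vectors, since each $a + jp^h$ for $1 \leq j \leq p-1$ lies in $[p^h, p^{h+1})$. This closes the induction.

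\medskip

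\noindent Part (2) is immediate: $\mathrm{LC}(\zp, E) = \bigcup_h \mathrm{LC}_h(\zp, E)$ and the bases from part (1) are compatible under inclusion.

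\medskip

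\noindent For part (3), I would apply the ONB criterion from the Proposition preceding Theorem~\ref{mahler} (the second bullet, using that $v_p(E)$ is discrete since $E/\qp$ is finite). Set $B = \sC^0(\zp, E)$, so $B^0 = \sC^0(\zp, \co_E)$. First, $v_{\sC^0}(B) = v_p(E)$ because constant functions realize every valuation in $v_p(E)$. Second, I claim the reduction map induces an identification
\[
B^0 / \pi_E B^0 \;\xrightarrow{\;\sim\;}\; \sC^0(\zp, k_E) \;=\; \mathrm{LC}(\zp, k_E),
\]
where the last equality holds because $k_E$ is finite (hence discrete), so any continuous map $\zp \to k_E$ has finite image with clopen fibres and is therefore locally constant. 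Surjectivity of the reduction map follows by writing any such locally constant function as a finite linear combination of indicator functions of clopen sets and lifting coefficients from $k_E$ to $\co_E$; injectivity (i.e., the kernel is $\pi_E B^0$) is immediate. Granting this identification, part (2) applied over the field $k_E$ (parts (1) and (2) are purely linear-algebraic and work over any field) tells us that the reductions $\overline{\mathbbm{1}_{i + p^{l(i)}\zp}}$ form a $k_E$-basis of $\sC^0(\zp, k_E)$. The ONB criterion then upgrades this to the statement that $(\mathbbm{1}_{i + p^{l(i)}\zp})_{i \geq 0}$ is an ONB of $B$.

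\medskip

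\noindent I expect the main obstacle to be bookkeeping in the inductive step of part (1)---making sure the indexing by $l(i)$ cleanly matches the passage from $\mathrm{LC}_h$ to $\mathrm{LC}_{h+1}$. Once that is clear, part (3) is essentially formal: the key move is recognizing that the awkward-looking norm identity hidden in ``ONB'' is equivalent, thanks to the earlier Proposition, to a statement about bases modulo $\pi_E$, which has already been handled by part (2).
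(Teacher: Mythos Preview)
Your proposal is correct. For part~(1), you and the paper argue slightly differently: you induct on $h$, while the paper observes directly that expanding each $\mathbbm{1}_{i + p^{l(i)}\zp}$ as a sum of the standard indicators $\mathbbm{1}_{a + p^h\zp}$ yields a lower-triangular change-of-basis matrix with $1$'s on the diagonal. Both are equally elementary; yours makes the recursion explicit, the paper's is a one-line linear-algebra observation.

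For part~(3) the approaches genuinely diverge. The paper asserts that the map $(a_i) \mapsto \sum a_i \mathbbm{1}_{i + p^{l(i)}\zp}$ from finitely supported sequences to $\mathrm{LC}(\zp,E)$ is an isometry, then extends it to the completion using the density of $\mathrm{LC}$ in $\sC^0$ (Lemma~\ref{LC dense in C^0}). Your route instead passes through the ONB criterion: identify $\sC^0(\zp,\co_E)/\pi_E$ with $\mathrm{LC}(\zp,k_E)$ and invoke part~(2) over $k_E$. Your argument is arguably more self-contained, since the paper's isometry assertion (specifically that the valuation of a finite combination equals $\inf v_p(a_i)$, not just $\geq$) is stated without justification, whereas your reduction-mod-$\pi_E$ step makes this transparent. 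On the other hand, the paper's approach is more direct once one grants the isometry, and avoids reproving part~(2) over a second field.
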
 

\begin{proof}
  By definition, the ${\mathbbm1}_{i + p^h \Zp}$ form a basis of  $\mathrm{LC}_h(\zp,E)$. Also, for $i \leq p^h-1$
  $${\mathbbm 1}_{i + p^{l(i)} \zp} = \sum_{j =0}^{p^{h - l(i)} -1} {\mathbbm 1}_{i +j p^{l(i)}  + p^h \zp}.$$
  One checks that the change of basis matrix (of size $p^h \times p^h$) is lower-triangular with $1$'s on the diagonal, so
  is invertible, so i), ii) follow.
   
   Consider the isometry 
   \begin{eqnarray*}
     \{\text{a. e. zero sequences}\} & \rightarrow & \mathrm{LC}(\zp,E) \\
       (a_i)_{i \geq 0}       & \mapsto     & \sum_{i \geq 0} a_i {\mathbbm 1}_{i +p^{l(i)}  \zp}.
  \end{eqnarray*}
  Since the LHS is dense in $l_\infty^0({\mathbb N}_{\geq 0},E)$, the above map extends to an isometry
  \begin{eqnarray*}
     l_\infty^0({\mathbb N}_{\geq 0},E) & \rightarrow & \overline{\mathrm{LC}(\zp,E)} = \sC^0(\zp,E) 
  \end{eqnarray*}
  where we have used Lemma~\ref{LC dense in C^0}.
  \end{proof}

The ONB of $\sC^0(\zp,E)$ consisting of ${\mathbbm 1}_{i + p^{l(i)} \zp}$ is called the {\it basis of wavelets}.
If $g \in \sC^0(\zp,E)$ satisfies
$$g = \sum_{i \geq 0} b_i(g) {\mathbbm 1}_{i+p^{l(i)} \zp},$$
then the $b_i(g) \in E$ are called the {\it amplitude coefficients}  of $g$.

\subsubsection{Mahler coefficients of locally constant functions}

{\bf \noindent Notation:} If $z \in E$ and $v_p(z-1) > 0$, then $$g_z(x) = \sum_{n=0}^\infty {x \choose n} (z-1)^n$$ 
converges uniformly (one uses Proposition~\ref{val 0} again, see the proof of Theorem~\ref{mahler})
so is a continuous function of $x \in \zp$.
Also $g_z(k) = \sum_{n=0}^\infty {k \choose n} (z-1)^n = (z-1 +1)^k = z^k$, so 
we may speak of $x \mapsto z^x$ as a function on $\zp$.
If $z = \zeta_{p^n} \in \mu_{p^n}$ is a $p^n$-th root of unity, then $z^{p^n} = 1 \implies z^{x+ p^n\zp} = z^x$ for $x \in \zp$,
so $z^x$ is a locally constant function on $\zp$.

\begin{Proposition}
  \label{LC basis}
  \begin{enumerate}
    \item Say $\mu_{p^h} \subset E$. Then $\zeta_{p^h}^x$ for $\zeta \in \mu_{p^h}$  form a basis of $\mathrm{LC}_h(\zp,E)$. 
    \item Say $\mu_{p^\infty} \subset E$. Then $\zeta^x$ for $\zeta \in \mu_{p^\infty}$ form a basis of $\mathrm{LC}(\zp,E)$. 
  \end{enumerate}
\end{Proposition}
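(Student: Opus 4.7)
The plan is to reduce part (2) to part (1), and prove part (1) by a Vandermonde determinant argument.

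For part (2), I would observe that $\mu_{p^h} \subset \mu_{p^{h+1}}$ gives $\mathrm{LC}_h(\zp, E) \subset \mathrm{LC}_{h+1}(\zp, E)$, and that the level-$h$ basis $\{\zeta^x : \zeta \in \mu_{p^h}\}$ produced in part (1) is itself a subset of the level-$(h+1)$ basis. Since every element of $\mathrm{LC}(\zp, E) = \bigcup_h \mathrm{LC}_h(\zp, E)$ lives in some $\mathrm{LC}_h(\zp, E)$ and thus is uniquely a finite linear combination of its level-$h$ basis, the nested union $\{\zeta^x : \zeta \in \mu_{p^\infty}\}$ is a basis of $\mathrm{LC}(\zp, E)$.

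For part (1), the first step is to confirm that each $\zeta^x$ with $\zeta \in \mu_{p^h}$ actually belongs to $\mathrm{LC}_h(\zp, E)$. Since $\zeta^{p^h} = 1$, for any $a \in \zp$ and $y \in \zp$ we have $\zeta^{a + p^h y} = \zeta^a$, so $\zeta^x$ is constant on every coset $a + p^h \zp$. By the preceding proposition on wavelets we know $\dim_E \mathrm{LC}_h(\zp, E) = p^h = |\mu_{p^h}|$, so it suffices to prove linear independence of these $p^h$ functions.

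Suppose $\sum_{\zeta \in \mu_{p^h}} c_\zeta \zeta^x = 0$ as a function on $\zp$. Specializing $x$ to $0, 1, \ldots, p^h - 1$ turns this into a homogeneous linear system whose coefficient matrix is the Vandermonde matrix $(\zeta^i)$ indexed by $0 \leq i \leq p^h - 1$ and $\zeta \in \mu_{p^h}$. Its determinant is, up to sign, $\prod_{\zeta \neq \zeta'}(\zeta - \zeta')$, which is nonzero because the $\zeta \in \mu_{p^h}$ are pairwise distinct. Hence all $c_\zeta$ vanish. The argument is essentially elementary, and there is no real obstacle; the only point that merits a word of justification is that $x \mapsto \zeta^x$ is well-defined on $\zp$ (not just on $\mathbb{N}_{\geq 0}$), which was already dealt with in the notation preceding the proposition via the uniformly convergent Mahler expansion $\zeta^x = \sum_{n \geq 0} \binom{x}{n}(\zeta - 1)^n$, valid because $v_p(\zeta - 1) > 0$.
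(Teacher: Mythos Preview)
Your proof is correct. Both you and the paper use the dimension count $\dim_E \mathrm{LC}_h(\zp,E) = p^h$, but you establish the other half differently: you prove linear independence of the $\zeta^x$ via a Vandermonde determinant, whereas the paper proves that they span by exhibiting the Fourier inversion formula
\[
  {\mathbbm 1}_{a+p^h\zp}(x) = \frac{1}{p^h}\sum_{\zeta \in \mu_{p^h}} \zeta^{x-a},
\]
which follows from the orthogonality relation $\sum_{\zeta \in \mu_{p^h}} \zeta^x = p^h\,{\mathbbm 1}_{p^h\zp}(x)$. Your Vandermonde argument is arguably cleaner as a pure basis proof, but the paper's route has the payoff of giving the explicit change-of-basis coefficients, which are then used in the subsequent remark to compute the Mahler coefficients $\alpha_{i,j}$ of the wavelet functions ${\mathbbm 1}_{i+p^{l(i)}\zp}$.
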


\begin{proof}
  For i), note that for $x \in \zp$, we have
  ${\mathbbm 1}_{a+p^h\zp}(x)= \frac{1}{p^h} \sum_{\zeta \in \mu_{p^h}} \zeta^{x-a}$ since
  $$\sum_{\zeta \in \mu_{p^h}} \zeta^x = \begin{cases} p^h & \text{if } x \in p^h \zp \\
                                                                                        0   & \text{otherwise.}
                                                              \end{cases}$$                                                             
  Thus the functions $\zeta^x$ for $\zeta \in \mu_{p^h}$ generate   
  $\mathrm{LC}_h(\zp,E)$ and even form a basis of  $\mathrm{LC}_h(\zp,E)$ 
  since their number  is $p^h$.
  
  Statement ii) follows from i).  
\end{proof}

\begin{remark}
For all $i,j \geq 0$, set
$$\alpha_{i,j} = \frac{1}{p^{l(i)}} \sum_{\zeta \in \mu_{p^{l(i)}}}\zeta^{-i} (\zeta - 1)^j$$
\vspace{1cm}
Using some algebraic number theoretic arguments (see \cite[Lemma I.3.5]{Col10a}), it can be shown that
$$\begin{cases}
     \alpha_{i,j} =0     & \text{if } j < i \\
     \alpha_{i,j} = 1     & \text{if } j = i \\
     v_p(\alpha_{i,j}) \geq \left\lfloor \dfrac{j - p^{l(i) -1}}{p^{l(i)} - p^{l(i) -1}}  \right\rfloor & \text{if } j > i.
    \end{cases}
 $$
Now 
$${\mathbbm 1}_{i+p^{l(i)}\zp}(x) =    \frac{1}{p^{l(i)}} \sum_{\zeta \in \mu_{p^{l(i)}}} \zeta^{x-i} 
      =    \sum_{n = 0}^\infty  {x \choose n} \sum_{\zeta \in \mu_{p^{l(i)}}}   \frac{1}{p^{l(i)}}  \zeta^{-i} (\zeta -1)^n$$          
 so its Mahler coefficients $a_n({\mathbbm 1}_{i+p^{l(i)}\zp})$ equal $\alpha_{i,n}$, which clearly tend to $0$ as $n \to \infty$. 
 This gives another proof of the surjectivity of the map in Theorem~\ref{mahler}, ii). Indeed considering
 that map to be taking values in the bigger space $l_\infty({\mathbb N}_{\geq 0},E)$, we note that the pre-image $B$ 
 of the closed subspace $l_\infty^0({\mathbb N}_{\geq 0}, E)$ is closed and, by the above remarks and 
 Proposition~\ref{LC basis}, part ii), contains
 $\mathrm{LC}(\zp,E)$. Then $B = \overline{B} \supset \overline{\mathrm{LC}(\zp,E)} = \sC^0(\zp, E)$,
 by Lemma~\ref{LC dense in C^0}. So $B = \sC^0(\zp, E)$.
\end{remark}

\subsection{Locally analytic functions}

\subsubsection{Locally analytic functions on a closed disk}

We will be brief. For $a \in E$ and $r \in {\mathbb R}$, let 
$$B(a,r)  = \{ x \in {\mathbb C}_p \mid v_p(x-a) \geq r \}$$
be the ball of radius $p^{-r}$ centered at $a$ in ${\mathbb C}_p$.

A function $g : B(a,r) \rightarrow  {\mathbb C}_p$ is {\it $E$-analytic} if there exists a sequence $a_k(g,a)$ of elements of $E$ for $k \geq 0$ such that $v_p(a_k(g,a) + kr) \rightarrow \infty$
as $k \rightarrow \infty$ and 
\begin{eqnarray}
    \label{val B(a,r)}
    g(x) = \sum_{k=0}^\infty a_k(g,a)(x-a)^k
\end{eqnarray}
for all $x \in B(a,r)$. Let
$${\mathrm{An}}(B(a,r),E) =  \{E\text{-analytic functions on } B(a,r)\}.$$
This is an $E$-Banach space under 
\begin{eqnarray*}
  v_{B(a,r)}(g) = \inf_{k \geq 0} v_p(a_k(g,a) + kr).
\end{eqnarray*}

\begin{Proposition}
If $g_1,g_2 \in {\mathrm{An}}(B(a,r),E)$, then $g_1g_2 \in {\mathrm{An}}(B(a,r),E)$.
\end{Proposition}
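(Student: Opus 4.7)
The plan is to form the Cauchy product and verify that it defines an $E$-analytic function on $B(a,r)$ whose value at each point of the disk coincides with $g_1(x) g_2(x)$. Write $g_1(x) = \sum_{k \geq 0} a_k (x-a)^k$ and $g_2(x) = \sum_{j \geq 0} b_j (x-a)^j$ with $\alpha_k := v_p(a_k) + kr \to \infty$ and $\beta_j := v_p(b_j) + jr \to \infty$ (interpreting the convention in \eqref{val B(a,r)} as $v_p(a_k) + kr \to \infty$). Set
\[
c_n \;=\; \sum_{k=0}^n a_k b_{n-k},
\]
and let $h(x) = \sum_{n \geq 0} c_n (x-a)^n$ be the candidate power series.

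The first step is to show $h \in \mathrm{An}(B(a,r), E)$, i.e., $\gamma_n := v_p(c_n) + nr \to \infty$. By the ultrametric inequality,
\[
\gamma_n \;\geq\; \min_{0 \leq k \leq n} \bigl( v_p(a_k) + v_p(b_{n-k}) \bigr) + nr \;=\; \min_{0 \leq k \leq n} \bigl( \alpha_k + \beta_{n-k} \bigr).
\]
Because $\alpha_k, \beta_j \to \infty$, both sequences are bounded below, say by a common constant $L$. Given any $M$, choose $K$ with $\alpha_k, \beta_j \geq M - L$ for all $k,j \geq K$. For $n \geq 2K$ and any $0 \leq k \leq n$, at least one of $k$ or $n-k$ is $\geq K$, so $\alpha_k + \beta_{n-k} \geq M$. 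This shows $\gamma_n \to \infty$.

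The second step is to identify $h(x) = g_1(x) g_2(x)$ for every $x \in B(a,r)$. Fixing such an $x$, the bivariate terms $a_k b_j (x-a)^{k+j}$ have $v_p$-valuation at least $\alpha_k + \beta_j$ (using $v_p(x-a) \geq r$), which tends to infinity as $k+j \to \infty$ by the same pigeonhole argument. In the non-archimedean setting this is exactly the condition that permits the double series $\sum_{k,j} a_k b_j (x-a)^{k+j}$ to be summed in any order; regrouping by the index $n = k+j$ yields $\sum_n c_n (x-a)^n = h(x)$, while regrouping as an iterated sum over $k$ then $j$ yields $g_1(x) g_2(x)$. Both rearrangements converge to the same value, so $g_1 g_2 = h \in \mathrm{An}(B(a,r), E)$.

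This proof is essentially routine once the right bookkeeping is set up; the only mild subtlety is the pigeonhole-style argument showing that the Cauchy-product coefficients satisfy the required growth, since one must exploit that both sequences $\alpha_k$ and $\beta_j$ are simultaneously bounded below and eventually large. As a bonus, the same estimate yields the norm inequality $v_{B(a,r)}(g_1 g_2) \geq v_{B(a,r)}(g_1) + v_{B(a,r)}(g_2)$, showing that $\mathrm{An}(B(a,r), E)$ is a Banach $E$-algebra.
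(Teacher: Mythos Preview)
Your proof is correct and is precisely the standard argument: form the Cauchy product, use the ultrametric inequality together with a pigeonhole on the shifted valuations $\alpha_k+\beta_{n-k}$ to get $\gamma_n\to\infty$, and invoke unconditional summability in the non-archimedean setting to identify the Cauchy product with the pointwise product. The paper does not actually prove this proposition but simply refers to \cite[Proposition I.4.2]{Col10a}; what you have written is exactly the argument one finds there, so you have filled in the omitted details with the expected proof.
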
 

\begin{proof}
  See \cite[Proposition I.4.2]{Col10a}.
\end{proof}

\begin{Proposition}
  If $g \in {\mathrm{An}}(B(a,r),E)$, then $v_{B(a,r)}(g) = \inf_{x \in B(a,r)} v_p(g(x))$.
\end{Proposition}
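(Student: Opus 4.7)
The plan is to prove the two inequalities separately; the inequality $\inf_{x \in B(a,r)} v_p(g(x)) \geq v_{B(a,r)}(g)$ is immediate from the non-archimedean estimate, while the reverse inequality requires exhibiting (or approximating) a point in the disk that attains the infimum. The main obstacle is this second direction, and the key idea is to reduce modulo the maximal ideal of $\mathcal{O}_{\mathbb{C}_p}$ to turn the power series into a nonzero polynomial over an infinite residue field.

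For the easy direction, I would fix $x \in B(a,r)$ and note that $v_p(x-a) \geq r$, so $v_p(a_k(x-a)^k) \geq v_p(a_k) + kr \geq v_{B(a,r)}(g)$ for every $k \geq 0$. Since the partial sums converge to $g(x)$ and the non-archimedean triangle inequality passes to limits, $v_p(g(x)) \geq v_{B(a,r)}(g)$, whence $\inf_{x \in B(a,r)} v_p(g(x)) \geq v_{B(a,r)}(g)$.

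For the reverse direction, set $v := v_{B(a,r)}(g)$. Since $v_p(a_k) + kr \to \infty$, only finitely many indices $k$ satisfy $v_p(a_k) + kr = v$, so the infimum is attained, and we may let $k_0$ denote the largest such index. After possibly enlarging $E$ inside $\mathbb{C}_p$ (which does not affect any of the valuations in the statement), I may assume $r \in v_p(E^{\times})$ and pick $\pi \in E$ with $v_p(\pi) = r$. Substituting $x = a + \pi y$ with $y \in \mathcal{O}_{\mathbb{C}_p}$ gives
\[
    \pi^{-v} g(a+\pi y) = \sum_{k \geq 0} \pi^{-v} a_k \pi^k y^k = \sum_{k \geq 0} c_k y^k,
\]
where $v_p(c_k) = v_p(a_k) + kr - v \geq 0$, with equality exactly for finitely many $k$ including $k_0$, and $v_p(c_k) \to \infty$.

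Reducing each coefficient modulo the maximal ideal of $\mathcal{O}_{\mathbb{C}_p}$ produces a nonzero polynomial $\bar{P}(\bar{y}) \in \br{\mathbb{F}}_p[\bar{y}]$ of degree at most $k_0$ (the higher-order terms die since $v_p(c_k) > 0$ eventually, and even among finitely many terms the sum converges coefficient-wise in $\mathcal{O}_{\mathbb{C}_p}/\mathfrak{m}$). Because $\br{\mathbb{F}}_p$ is infinite, a nonzero polynomial cannot vanish identically, so there exists $\bar{y}_0 \in \br{\mathbb{F}}_p$ with $\bar{P}(\bar{y}_0) \neq 0$. Lifting $\bar{y}_0$ to some $y_0 \in \mathcal{O}_{\mathbb{C}_p}$ (e.g.\ by a Teichmüller-type lift) and setting $x_0 := a + \pi y_0 \in B(a,r)$, one finds $v_p(\pi^{-v} g(x_0)) = 0$, i.e.\ $v_p(g(x_0)) = v$. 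This produces a point in $B(a,r)$ realizing the value $v_{B(a,r)}(g)$, completing the proof. The delicate step is verifying that the reduction genuinely gives a nonzero polynomial; this is where the hypothesis $v_p(a_k) + kr \to \infty$, together with the attainment of the infimum at $k_0$, is essential.
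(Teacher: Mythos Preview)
Your argument is correct and is the standard reduction-to-residue-field proof of the $p$-adic maximum modulus principle; the paper itself does not give a proof but simply refers to \cite[Proposition I.4.3]{Col10a}, where essentially this argument appears. One notational slip: writing $\pi^{-v}$ is ill-defined since $v = v_{B(a,r)}(g)$ need not be an integer; instead take any $\varpi \in \mathbb{C}_p$ with $v_p(\varpi) = v$ (for instance $\varpi = a_{k_0}\pi^{k_0}$) and set $c_k = \varpi^{-1} a_k \pi^k$, after which the rest of your argument goes through verbatim.
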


\begin{proof}
  See \cite[Proposition I.4.3]{Col10a}.
\end{proof}

\subsubsection{Locally analytic functions on $\Zp$}

Define for $h \in {\mathbb N}_{\geq 0}$
$$\mathrm{LA}_h(\zp,E) = \{g : \zp \rightarrow E \mid \text{for } a \in \zp, \>
    g\mid_{a + p^h \zp} = g_{a,h} 
    \text{ for some } g_{a,h} \in \mathrm{An}(B(a,h),E) \}. $$
This is an $E$-Banach space under 
\begin{eqnarray*}
    v_{{\mathrm LA}_h}(g) = \inf_{a \in \zp} v_{B(a,h)} (g_{a,h}).
\end{eqnarray*}

\begin{remark}
   \label{alt def val}
   There is an alternative description to $\mathrm{LA}_h(g)$. 
    For each $a \in \Zp$, write $g$  as 
    \begin{eqnarray}
        \label{val B(a,r) again}
      g(x) = \sum_{k=0}^\infty  \tilde{a}_k(g,a)  \left(\frac{x-a}{p^h}\right)^k.
    \end{eqnarray}
    Since the coefficients  in  \eqref{val B(a,r)}  and \eqref{val B(a,r) again}
    are related by $a_k(g,a) = \tilde{a}_k(g,a) p^{-hk}$,
    we see that  $v_{B(a,h)}(g) = \inf_{k \geq 0} v_p(\tilde{a}_k(g,a))$. Thus
    $$ v_{{\mathrm LA}_h}(g) = \inf_{a \in \zp} \inf_{k \geq 0} v_p(\tilde{a}_k(g,a)).$$
    Note that both here and just before the remark, we may restrict the infimum over $a \in \zp$ to 
    a (finite) set of representatives $a$ of $\zp/p^h\zp$.
\end{remark}

Let 
$$\mathrm{LA}(\zp,E) = \bigcup_{h \in {\mathbb N}_{\geq 0}} \mathrm{LA}_h(\zp,E).$$

\subsubsection{Mahler coefficients of locally analytic functions}

We only state the following interesting results as background since we will not use them.

\begin{theorem}
  The $\lfloor \frac{n}{p^h} \rfloor ! {x \choose n}$ for $n \in {\mathbb N}_{\geq 0}$ form an ONB of $\mathrm{LA}_h(\zp,E)$.
\end{theorem}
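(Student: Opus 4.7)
The plan is to reduce this result to a Mahler-style characterization: I will prove that a continuous function $g = \sum_{n \geq 0} a_n(g)\binom{x}{n}$ lies in $\mathrm{LA}_h(\zp, E)$ if and only if $v_p(a_n(g)) - v_p(\lfloor n/p^h \rfloor!) \to \infty$ as $n \to \infty$, and that under this hypothesis
$$v_{\mathrm{LA}_h}(g) = \inf_{n \geq 0}\bigl(v_p(a_n(g)) - v_p(\lfloor n/p^h \rfloor!)\bigr).$$
Granting this, the theorem is immediate: the linear map $(c_n) \mapsto \sum_n c_n \lfloor n/p^h \rfloor!\binom{x}{n}$ becomes a bijective isometry from $l_\infty^0(\mathbb{N}_{\geq 0}, E)$ onto $\mathrm{LA}_h(\zp, E)$ sending the standard basis $(\delta_n)$ to $(\lfloor n/p^h\rfloor!\binom{x}{n})$, which is therefore an ONB by definition.

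For the easy direction (any null sequence yields an element of $\mathrm{LA}_h$), it suffices to show $v_{\mathrm{LA}_h}(\lfloor n/p^h\rfloor!\binom{x}{n}) \geq 0$ for each $n$. I would expand by Vandermonde
$$\binom{a + p^h y}{n} = \sum_{j=0}^n \binom{a}{n-j}\binom{p^h y}{j}.$$
Since $\binom{a}{n-j} \in \zp$ for $a \in \zp$, the problem collapses to the polynomial statement that $\lfloor n/p^h\rfloor!\binom{p^h y}{j}$ has coefficients in $\zp$ as a polynomial in $y$, for every $0 \leq j \leq n$. The coefficient of $y^l$ inside $\binom{p^h y}{j}$ is $p^{hl} s(j, l)/j!$ (with $s(j,l)$ a Stirling number of the first kind), and the desired bound then follows by combining Legendre's formula $v_p(m!) = (m - s_p(m))/(p-1)$ with the identity
$$v_p(n!/\lfloor n/p^h\rfloor!) = \sum_{i=1}^h \lfloor n/p^i \rfloor,$$
plus elementary floor-function estimates.

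For the hard direction I would fix $g \in \mathrm{LA}_h(\zp, E)$ with $v_{\mathrm{LA}_h}(g) = c$, decompose $\zp = \bigsqcup_{a=0}^{p^h - 1}(a + p^h\zp)$, and on each piece use the local power series $g(a + p^h y) = \sum_k \tilde{a}_k(g, a) y^k$ with $v_p(\tilde{a}_k(g, a)) \geq c$. I then evaluate the Mahler coefficient $a_n(g)$ via the finite-difference formula \eqref{coeff formula}, grouping summands according to the residue class of $n - i$ modulo $p^h$ and substituting the corresponding local expansion. The combinatorial identity that emerges is dual to the Vandermonde step above and forces $\lfloor n/p^h\rfloor!$ to divide the resulting coefficient, producing the sharp bound $v_p(a_n(g)) \geq c + v_p(\lfloor n/p^h\rfloor!)$. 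The main obstacle throughout is the arithmetic of Legendre's formula, which behaves non-uniformly with $n$; the precise weight $\lfloor n/p^h\rfloor!$ (rather than a looser factorial) emerges only after carefully matching the leading terms produced in the forward and backward passes, and this is what pins down the isometry rather than merely a Banach-basis isomorphism.
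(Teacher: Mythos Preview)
The paper does not prove this theorem; it simply refers to \cite[Theorem I.4.7]{Col10a}. So there is no in-paper argument to compare against, and your proposal must be judged on its own merits. Your overall architecture---characterise $\mathrm{LA}_h$ by the decay condition $v_p(a_n(g))-v_p(\lfloor n/p^h\rfloor!)\to\infty$ and prove the two inequalities separately---is exactly the standard Amice--Colmez strategy and is the right plan.

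There is, however, a real gap in your easy direction. After the Vandermonde reduction you need that $\lfloor n/p^h\rfloor!\,\binom{p^h y}{j}\in\zp[y]$ for $j\le n$, and you propose to read this off from the formula $p^{hl}s(j,l)/j!$ for the coefficient of $y^l$, invoking ``Legendre's formula\ldots\ plus elementary floor-function estimates''. But those tools give no information about $v_p(s(j,l))$ beyond $s(j,l)\in\bZ$, and that is not enough: for small $l$ the term $p^{hl}$ is far too weak to beat $j!$, and the rescue comes precisely from the (nontrivial) $p$-adic size of the Stirling number, which you never address. The clean argument bypasses Stirling numbers entirely: for $z\in p^h\co_{\bC_p}$ one estimates $v_p\!\left(\prod_{i=0}^{j-1}(z-i)\right)$ by separating the factors with $v_p(i)\ge h$ from those with $v_p(i)<h$, and a short count yields exactly $v_p\bigl(\binom{z}{j}\bigr)\ge -v_p(\lfloor j/p^h\rfloor!)$ with equality when $p^h\mid j$. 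That is the computation you are missing.

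Your hard direction is too schematic to count as a proof. The phrase ``the combinatorial identity that emerges is dual to the Vandermonde step'' does not explain how the sharp lower bound $v_p(a_n(g))\ge v_{\mathrm{LA}_h}(g)+v_p(\lfloor n/p^h\rfloor!)$ actually arises from grouping the finite-difference sum by residue classes mod $p^h$; this is exactly where the work lies (and where the exponent $\lfloor n/p^h\rfloor!$ rather than a cruder factorial is forced), and your sketch gives no mechanism for it.
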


\begin{proof}
  See \cite[Theorem I.4.7]{Col10a}.
\end{proof}

\begin{Corollary}
  Say $g \in \sC^0(\zp,E)$. Then 
  $$g \in \mathrm{LA}(\zp,E) \iff \liminf_{n \geq 0} \left(\frac{v_p(a_n(g))}{n} \right) > 0.$$
\end{Corollary}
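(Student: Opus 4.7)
The plan is to reduce the corollary to the preceding theorem, which provides the ONB $\lfloor n/p^h\rfloor!\,\binom{x}{n}$ of $\mathrm{LA}_h(\zp,E)$. Since the Mahler expansion of $g$ is unique, the $n$-th coordinate of $g$ with respect to this ONB is forced to be $b_n := a_n(g)/\lfloor n/p^h\rfloor!$. Hence $g \in \mathrm{LA}_h(\zp,E)$ if and only if $b_n \to 0$, i.e.\ if and only if
\[
    v_p(a_n(g)) - v_p(\lfloor n/p^h\rfloor!) \longrightarrow \infty.
\]
So the whole question boils down to comparing $v_p(a_n(g))$ with $v_p(\lfloor n/p^h\rfloor!)$ as $n \to \infty$.

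For this comparison I would invoke the Legendre formula $v_p(m!) = (m - s_p(m))/(p-1)$, which gives the asymptotic
\[
    v_p(\lfloor n/p^h\rfloor!) \;=\; \frac{n}{p^h(p-1)} + O(\log n).
\]
With this in hand, both directions of the corollary become exercises in book-keeping. For ($\Rightarrow$), assume $g \in \mathrm{LA}_h(\zp,E)$; then $v_p(a_n(g)) \geq v_p(\lfloor n/p^h\rfloor!) + C$ eventually, which after dividing by $n$ and taking $\liminf$ yields $\liminf v_p(a_n(g))/n \geq 1/(p^h(p-1)) > 0$. For ($\Leftarrow$), set $c := \liminf v_p(a_n(g))/n > 0$ and choose an integer $h \geq 0$ with $1/(p^h(p-1)) < c$; then for any $\varepsilon$ with $0 < \varepsilon < c - 1/(p^h(p-1))$ and all $n$ sufficiently large,
\[
    v_p(b_n) \;\geq\; (c-\varepsilon)n - \frac{n}{p^h(p-1)} + O(\log n),
\]
which tends to $\infty$, giving $g \in \mathrm{LA}_h(\zp,E) \subset \mathrm{LA}(\zp,E)$.

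There is no real obstacle here: the only mildly delicate point is making sure the factorial asymptotic is applied with the right error term so the $\liminf$ controls the dominant linear term. I would take a moment to note that the theorem preceding the corollary is what makes the ONB identification of the $b_n$ legal — without uniqueness of the Mahler expansion, the identification $a_n(g) = b_n\,\lfloor n/p^h\rfloor!$ would not be automatic — and after that the proof is essentially a two-line Legendre-formula computation in each direction.
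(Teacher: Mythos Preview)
Your proof is correct and follows exactly the standard approach (the paper itself simply cites \cite[Corollary~I.4.8]{Col10a} without giving details). The one point worth spelling out a little more carefully in the $(\Leftarrow)$ direction is why the convergence of $\sum b_n\,\lfloor n/p^h\rfloor!\binom{x}{n}$ in $\mathrm{LA}_h(\zp,E)$ actually recovers $g$: you need that the inclusion $\mathrm{LA}_h(\zp,E)\hookrightarrow\sC^0(\zp,E)$ is continuous (this follows from Proposition~\ref{LA vs Cr} with $r=0$), so that the series also converges to the same limit in $\sC^0$, where it equals $g$ by Mahler's theorem. You allude to this via ``uniqueness of the Mahler expansion'', but it is the continuity of the inclusion that makes the identification go through.
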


\begin{proof}
  See \cite[Corollary I.4.8]{Col10a}.
\end{proof}

\subsection{Functions of class $\sC^r$}

\subsubsection{Differentiable functions}

A function $g : \zp \rightarrow E$ is {\it differentiable} at $x_0 \in \Zp$ if
$$ \lim_{h \rightarrow 0} \dfrac{g(x_0 + h) - g(x_0)}{h}$$
exists. If the limit exists, then it is denoted as in the real world (pun intended) by $g'(x_0)$ or $g^{(1)}(x_0)$.
We say  $g : \zp \rightarrow E$ is {\it differentiable of order $1$} if $g$ is 
differentiable at each $x_0 \in \Zp$. As usual  such a function is continuous on $\zp$
since for all $x_0 \in \zp$, we have $\lim_{h \rightarrow 0} g(x_0 + h) - g(x_0) = g'(x_0) \lim_{h \rightarrow 0} h = 0$. 
More generally, say that $g : \zp \rightarrow E$ is {\it differentiable of order $k$} with the $k$-th derivative 
denoted by $g^{(k)}$ if $g$ is differentiable of order $k-1$ and $g^{(k-1)}$ is differentiable of order $1$.

We now come to one of the most important definition of these notes. Let $r \geq 0$ be a real number. We say
that a function $g : \zp \rightarrow E$ is of {\it class $\sC^r$} if there exist functions $$g^{(j)} : \zp \rightarrow E$$ 
for $0 \leq j \leq \lfloor r \rfloor$ such that if $\varepsilon_{g,r} : \zp \times \zp \rightarrow E$ is given by
\begin{eqnarray}
  \label{varepsilon}
  \varepsilon_{g,r}(x,y) = g(x+y) - \sum_{j=0}^{\lfloor r \rfloor} g^{(j)}(x) \frac{y^j}{j!}
\end{eqnarray}
and $C_{g,r} : {\mathbb N}_{\geq 0} \rightarrow {\mathbb R} \cup \{\infty\}$ is given by
$$C_{g,r}(h) = \inf_{x \in \zp \atop y \in p^h \zp} (v_p(\varepsilon_{g,r}(x,y)) - rh),$$
then $C_{g,r}(h) \rightarrow \infty$ as $h \rightarrow \infty$.

\begin{remark}
\begin{itemize}
\item Taking $y = 0$, we have $\varepsilon_{g,r}(x,0) = g(x) - g^{(0)}(x)$, so $$C_{g,r}(h) \leq v_p( g(x) - g^{(0)}(x)) - rh.$$
  Since the LHS goes to $\infty$ as $h \rightarrow \infty$, so must the RHS, and we deduce
  $g^{(0)}(x) = g(x)$ for all $x \in \Zp$.
\item One may check that  more generally the $g^{(j)}$ in the definition are the $j$-th derivatives of $g$ for $0 \leq j \leq   
  \lfloor r \rfloor$ (see Remark~\ref{derivative}).
\item Fix $x \in \Zp$. If follows from \eqref{varepsilon} that for $h \in {\mathbb N}_{\geq 0}$ and $y \in p^h \zp$
  $$v_p(\varepsilon_{g,r}(x,y))  \geq \min \left(v_p(g(x+y) - g(x)), \min_{j = 1, \ldots, \lfloor r \rfloor} v_p(\frac{g^{(j)}(x)}{j!} y^j)       
  \right).$$
  Now the LHS goes to $\infty$ as $h \rightarrow \infty$ (even after subtracting $rh$ from it). 
  But each of the terms except the first on the RHS also goes to $\infty$ as $h \rightarrow \infty$ since 
  $v_p(\frac{g^{(j)}(x)}{j!} y^j) \geq v_p(\frac{g^{(j)}(x)}{j!}) + jh$ and $x$ is fixed. Thus the first term 
  on the RHS must also go to $\infty$
  as $h \rightarrow \infty$ (else the inequality in the display above would be an equality and this would lead to a contradiction)
  and we deduce that $g$ is continuous at $x$. 
\item Finally if $r = 0$, then \eqref{varepsilon} gives $$v_p(\varepsilon_{g,r}(x,y)) - rh = v_p(g(x+y) - g(x)).$$ Since
  the infimum of the LHS over all $x \in \Zp$ and $y \in p^h \Zp$ goes to $\infty$ as $h \rightarrow \infty$, so must the RHS,     
  and we deduce that $g$ is uniformly continuous on 
  $\Zp$. This of course already follows from the previous bullet point which shows $g$ is continuous on the compact
  domain  $\zp$.
\end{itemize}
\end{remark}

We let
$$\sC^r(\zp,E) = \{g : \zp \rightarrow E \mid g \text{ is of class } \sC^r \}.$$
This is an $E$-Banach space under 
\begin{eqnarray}
  \label{val C^r prime}
  v'_{\sC^r}(g) = \min \left( \inf_{0 \leq j \leq \lfloor r \rfloor \atop x \in \zp} v_p(\frac{g^{(j)}(x)}{j!}), \inf_{x \in \Zp \atop y \in \Zp} (v_p(\varepsilon_{g,r}(x,y))-r v_p(y))  \right).
\end{eqnarray}

The following remark \cite[Remark I.5.2]{Col10a} is worth expanding on a bit:

\begin{remark}
\label{diff but not C^r}
If $g : \zp \rightarrow E$ is differentiable of order $r$, with the $r$-th derivative $g^{(r)}$ continuous, 
then $g$ does not necessarily lie in $\sC^r(\zp,E)$. Consider the function $g : \zp \rightarrow \zp$ given by 
$$g : \sum_{n=0}^\infty a_n p^n \mapsto \sum_{n=0}^\infty a_n p^{2n}.$$ Then one checks that
$v_p(g(x) - g(y)) \geq 2 v_p(x-y)$ for all $x,y \in \zp$. For instance, take $y =0$ and note $g(0) = 0$. 
Take $x \neq 0$ and let $n_0$ be the first index  in the base $p$ expansion of $x$ such that $a_{n_0} \neq 0$. 
Then the first non-zero coefficient in the base $p$ expansion of $g(x)$ is the coefficient of $p^{2n_0}$, 
so we see $v_p(g(x)) = 2n_0 = 2 v_p(x)$.
In any case, we have
$$v_p\left(\frac{g(x)-g(y)}{x-y}\right) \geq v_p(x-y),$$ so $g$ is differentiable at all $x \in \Zp$ 
with $g'(x) = 0$. Hence $g''(x)$ is identically $0$ (as are all higher derivatives). 
But $g \notin \sC^2(\zp,E)$. Indeed, $v_p(\varepsilon_{g,2}(0,p^h)) -2h = v_p(g(0+p^h) - g(0)) -2h= 2h-2h = 0$,
so the infimum of $\varepsilon_{g,2}(x,y) -2h$ over all $x \in \Zp$ and $y \in p^h\zp$ is at most $0$ and cannot
escape to $\infty$. (It turns out that $g \in \sC^1(\zp,\qp)$, however.)
\end{remark}

\subsubsection{Local properties of $\sC^r$-functions}

\begin{lemma}
 \label{sum estimate}
 Let $C(N) = \sum_{n=1}^N v_p(n!)$ and $a_j \in E$ for $0 \leq j \leq N$.
 Then for all $h \in {\mathbb Z}$
 $$\min_{0 \leq j \leq N} (v_p(a_j) + jh) \geq \left( \min_{z \in p^h \zp} v_p(\sum_{j=0}^N a_j \frac{z^j}{j!}) \right) - C(N).$$
 \end{lemma}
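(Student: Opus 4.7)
The plan is to prove the inequality by induction on $N$, exploiting the recursion $C(N) = C(N-1) + v_p(N!)$. The base case $N = 0$ is immediate, since $P(z) = a_0$ forces both sides to equal $v_p(a_0)$ and $C(0) = 0$.

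For the inductive step, I would first handle the top index $j = N$ by extracting the leading coefficient via the finite-difference identity recalled earlier in the paper. Setting $f(u) = P(p^h u) = \sum_{j=0}^N \tfrac{a_j p^{hj}}{j!}\, u^j$, this is a polynomial in $u$ of degree $\leq N$ with leading coefficient $\tfrac{a_N p^{hN}}{N!}$, and the standard formula $f^{[N]}(0) = \sum_{i=0}^N (-1)^{N-i}\binom{N}{i} f(i)$, combined with the fact that $f^{[N]}$ of a degree-$N$ polynomial equals $N!$ times its leading coefficient, gives
$$a_N\, p^{hN} \;=\; f^{[N]}(0) \;=\; \sum_{i=0}^N (-1)^{N-i} \binom{N}{i} f(i).$$
Since $\binom{N}{i} \in \Zp$ and each $p^h i$ lies in $p^h \Zp$, this immediately yields $v_p(a_N) + Nh \geq \min_{z \in p^h \Zp} v_p(P(z))$, which is already stronger than the bound required for $j = N$ (the term $-C(N)$ is slack here).

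For $j < N$, the idea is to subtract the top monomial off and apply the inductive hypothesis to the resulting polynomial of degree $\leq N-1$. Let $\tilde P(z) = P(z) - \tfrac{a_N z^N}{N!}$, whose coefficients are $a_0, \ldots, a_{N-1}$. For $z \in p^h \Zp$, the bound just proved for $a_N$ gives
$$v_p\!\left(\tfrac{a_N z^N}{N!}\right) \;\geq\; v_p(a_N) + Nh - v_p(N!) \;\geq\; \min_{z \in p^h \Zp} v_p(P(z)) - v_p(N!),$$
and hence $v_p(\tilde P(z)) \geq \min_{z \in p^h \Zp} v_p(P(z)) - v_p(N!)$ for every such $z$. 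Applying the inductive hypothesis to $\tilde P$ then produces
$$\min_{0 \leq j \leq N-1}(v_p(a_j) + jh) \;\geq\; \min_z v_p(\tilde P(z)) - C(N-1) \;\geq\; \min_z v_p(P(z)) - v_p(N!) - C(N-1) \;=\; \min_z v_p(P(z)) - C(N),$$
which combined with the $j = N$ case closes the induction. There is no real obstacle: the only thing to check is that the single-step loss of $v_p(N!)$ — picked up when bounding the subtracted top term $\tfrac{a_N z^N}{N!}$ — accumulates across the $N$ induction steps to precisely $C(N) = \sum_{n=1}^N v_p(n!)$, which it does by construction.
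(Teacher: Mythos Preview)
Your proof is correct. The paper does not actually give a proof of this lemma; it simply refers the reader to \cite[Lemma I.5.3]{Col10a}. Your inductive argument---extracting the leading coefficient via the finite-difference formula $f^{[N]}(0) = \sum_i (-1)^{N-i}\binom{N}{i} f(i)$ applied to $f(u) = P(p^h u)$, then peeling off the top monomial and recursing---is essentially the standard one found in Colmez's notes, so there is nothing to contrast.
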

 
 \begin{proof}
   See \cite[Lemma I.5.3]{Col10a}.
 \end{proof}

\begin{Proposition}
If $r \geq 1$ and $g \in \sC^r(\zp,E)$, then $g$ is differentiable at each $x \in \zp$. Moreover
\begin{enumerate}
  \item [a)] $g' \in \sC^{r-1}(\zp,E)$ 
  \item [b)] $(g')^{(j)} = g^{(j+1)}$ if $j \leq r-1$.
\end{enumerate}

\begin{proof}
Say $g \in \sC^r(\zp,E)$ with $r \geq 1$. Let $x \in \Zp$ and $y \in p^h \zp \setminus p^{h+1}\zp$. Then 
$$\frac{g(x+y) - g(x)}{y} = \sum_{j=1}^{\lfloor r \rfloor} \frac{g^{(j)}(x) y^{j-1}}{j!} + \frac{\varepsilon_{g,r}(x,y)}{y}.$$
The last term on the right tends to $0$ as $h \rightarrow \infty$ since $v_p(\varepsilon_{g,r}(x,y)) - v_p(y) \geq  
v_p(\varepsilon_{g,r}(x,y)) - r h \geq C_{g,r}(h) \rightarrow \infty$. On the other hand as $y \rightarrow 0$, the LHS tends to $g'(x)$, and the first term on the right tends to $g^{(1)}(x)$. This proves that $g$ is differentiable at each $x \in \Zp$ 
and $g' = g^{(1)}$.

Also 
\begin{eqnarray}
  \label{vareps computation}
  \varepsilon_{g,r}(x,y+z) - \varepsilon_{g,r}(x+y,z)  
             & = &  g(x+y+z) - \sum_{j=0}^{\lfloor r \rfloor} \frac{g^{(j)}(x) (y+z)^{j}}{j!}
               - \left(g(x+y+z) - \sum_{j=0}^{\lfloor r \rfloor} \frac{g^{(j)}(x+y) z^{j}}{j!} \right) \nonumber \\            
             & = & \sum_{j=0}^{\lfloor r \rfloor} \frac{g^{(j)}(x+y) z^{j}}{j!} - 
                           \sum_{j=0}^{\lfloor r \rfloor} g^{(j)}(x) \sum_{k=0}^j \frac{y^{j-k} z^k}{(j-k)!k!} \nonumber \\
             & = & \sum_{j=0}^{\lfloor r \rfloor} \frac{z^{j}}{j!} \left(  g^{(j)}(x+y) - \sum_{k=0}^{\lfloor r \rfloor -j} g^{(j+k)}(x) \frac{y^{k}}{k!} \right)         
\end{eqnarray}
since the second sum in the second line is (substitute $u = j-k$ and then relabel):
$$ \sum_{k=0}^{\lfloor r \rfloor}  \sum_{j=k}^{\lfloor r \rfloor} g^{(j)}(x) \frac{y^{j-k} z^k}{(j-k)!k!} 
  = \sum_{k=0}^{\lfloor r \rfloor}  \sum_{u=0}^{\lfloor r \rfloor -k} g^{(u+k)}(x) \frac{y^{u} z^k}{u!k!}
  = \sum_{j=0}^{\lfloor r \rfloor}   \frac{z^j}{j!}  \sum_{k=0}^{\lfloor r \rfloor -j} g^{(j+k)}(x) \frac{y^{k}}{k!}. $$
If $v_p(y), v_p(z) \geq h$, then the LHS, hence the RHS,  of \eqref{vareps computation} is bounded below by $C_{g,r}(h) + rh$. Applying Lemma~\ref{sum estimate} with $N = \lfloor r \rfloor$ and $a_j$ the quantity in the large parentheses above we obtain that for each $0 \leq j \leq \lfloor r \rfloor$
\begin{eqnarray}
  \label{vareps estimate again}
  v_p \left( g^{(j)}(x+y) - \sum_{k=0}^{\lfloor r \rfloor -j} g^{(j+k)}(x) \frac{y^{k}}{k!} \right) - (r-j)h  \geq C_{g,r}(h) - C(\lfloor r \rfloor).
 \end{eqnarray}
This shows that for each $0 \leq j \leq \lfloor r \rfloor$, we have $g^{(j)} \in \sC^{r-j}(\zp,E)$. It also shows that 
  $$(g^{(j)})^{(k)} = g^{(j+k)}$$
  if $j + k \leq \lfloor r \rfloor$. This proves a) 
  and b).
\end{proof}
\end{Proposition}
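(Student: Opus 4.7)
The plan is to argue directly from the definition of class $\sC^r$ via the auxiliary remainder $\varepsilon_{g,r}$, with $g^{(1)}$ as the candidate first derivative of $g$ and, more generally, $(g')^{(j)} := g^{(j+1)}$ as the candidate for the $j$-th derivative of $g'$.

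First I would establish pointwise differentiability. From the defining relation $g(x+y) = \sum_{j=0}^{\lfloor r\rfloor} g^{(j)}(x)\,y^j/j! + \varepsilon_{g,r}(x,y)$, I subtract $g(x) = g^{(0)}(x)$ and divide by $y \neq 0$ to write the difference quotient as $g^{(1)}(x) + \sum_{j=2}^{\lfloor r \rfloor} g^{(j)}(x)\,y^{j-1}/j! + \varepsilon_{g,r}(x,y)/y$. For $y \in p^h\zp \setminus \{0\}$, one has $v_p(\varepsilon_{g,r}(x,y)/y) \geq v_p(\varepsilon_{g,r}(x,y)) - rh \geq C_{g,r}(h)$, which tends to $\infty$ as $h \to \infty$ by the $\sC^r$-hypothesis. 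Letting $y \to 0$ the error vanishes and the polynomial part converges to $g^{(1)}(x)$, so $g'(x) = g^{(1)}(x)$ for every $x \in \zp$.

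Next I would prove a) and b) simultaneously by \emph{repackaging} the remainders. The idea is to compute $\varepsilon_{g,r}(x,y+z) - \varepsilon_{g,r}(x+y,z)$ in two ways. Expanding both summands from the definition, the $g(x+y+z)$ pieces cancel, and after expanding $(y+z)^j$ and swapping the order of summation in $j,k$, the difference equals a polynomial in $z$ of degree $\lfloor r\rfloor$ whose coefficient against $z^j/j!$ is precisely
$$g^{(j)}(x+y) \;-\; \sum_{k=0}^{\lfloor r \rfloor - j} g^{(j+k)}(x)\,\frac{y^{k}}{k!}.$$
These are exactly the candidate $(r{-}j)$-th order remainders I need to bound in order to certify $g^{(j)} \in \sC^{r-j}(\zp,E)$.

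The main obstacle is passing from a bound on the whole polynomial in $z$ to bounds on its individual coefficients. For $v_p(y), v_p(z) \geq h$ the LHS of the identity is visibly bounded below by $C_{g,r}(h) + rh$ (each summand already satisfies such a bound), but this does not by itself separate the contributions of different powers of $z$. Lemma \ref{sum estimate} is tailor-made for precisely this extraction: applied with $N = \lfloor r \rfloor$, it converts the lower bound on $\min_{z \in p^h\zp} v_p(\sum a_j z^j/j!)$ into coefficient-wise bounds $\min_j(v_p(a_j) + jh)$ at the cost of the absolute constant $C(\lfloor r \rfloor)$. This delivers
$$v_p\!\left(g^{(j)}(x+y) - \sum_{k=0}^{\lfloor r \rfloor - j} g^{(j+k)}(x)\,\frac{y^{k}}{k!}\right) - (r-j)h \;\geq\; C_{g,r}(h) - C(\lfloor r \rfloor)$$
for every $0 \leq j \leq \lfloor r \rfloor$, whose RHS escapes to $\infty$ as $h \to \infty$. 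Specializing to $j = 1$ recognizes the LHS as $v_p(\varepsilon_{g',\,r-1}(x,y)) - (r-1)h$ with candidate derivatives $(g')^{(k)} = g^{(k+1)}$, giving both a) and b); the general-$j$ version yields the stronger fact that each $g^{(j)} \in \sC^{r-j}(\zp,E)$ with compatible iterated derivatives.
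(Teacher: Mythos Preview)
Your proposal is correct and follows essentially the same approach as the paper: the same difference-quotient argument for differentiability, the same key identity $\varepsilon_{g,r}(x,y+z) - \varepsilon_{g,r}(x+y,z) = \sum_j (z^j/j!)\bigl(g^{(j)}(x+y) - \sum_k g^{(j+k)}(x)y^k/k!\bigr)$, and the same appeal to Lemma~\ref{sum estimate} to extract coefficient-wise bounds. The paper likewise concludes the stronger statement $g^{(j)} \in \sC^{r-j}(\zp,E)$ for all $j$, not just $j=1$.
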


\begin{remark}
  \label{derivative}
  It follows that if $g \in \sC^r(\zp,E)$, then $g$ is differentiable of order $\lfloor r \rfloor$ and 
  $g^{(j)}$ is the $j$-th derivative of $g$ for $j \leq \lfloor r \rfloor$. 
 \end{remark} 
   
 \begin{Proposition}
   \label{prop comp}
   If $g_1 : \zp \rightarrow \zp$ if of class $\sC^r$ and $g_2 \in \sC^r(\zp,E)$, then $g_2 \circ g_1 \in \sC^r(\zp,E)$.
 \end{Proposition}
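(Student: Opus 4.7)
The plan is to define the derivatives of the composition by Faà di Bruno's formula and then verify the error estimate by nesting the two Taylor expansions. Specifically, for $0 \le j \le \lfloor r \rfloor$, I would set
\[
(g_2 \circ g_1)^{(j)}(x) := \sum_{(k_1,\ldots,k_j)} \frac{j!}{k_1!\,\cdots k_j!\, (1!)^{k_1}\cdots (j!)^{k_j}} \, g_2^{(k_1+\cdots+k_j)}(g_1(x)) \prod_{i=1}^{j} \bigl(g_1^{(i)}(x)\bigr)^{k_i},
\]
the sum taken over non-negative integer tuples with $k_1 + 2k_2 + \cdots + j k_j = j$. The task is then to check that with these functions, the error $\varepsilon_{g_2 \circ g_1, r}(x,y)$ satisfies $C_{g_2 \circ g_1, r}(h) \to \infty$.

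The strategy for the error bound is a two-step expansion. Fix $x \in \Zp$ and $y \in p^h \Zp$, and set $w := g_1(x+y) - g_1(x)$. Writing $M_1 := v'_{\sC^r}(g_1)$, the fact that $v_p(g_1^{(k)}(x)/k!) \ge M_1$ together with the defining estimate $v_p(\varepsilon_{g_1,r}(x,y)) \ge M_1 + rh$ gives
\[
w = \sum_{k=1}^{\lfloor r \rfloor} \frac{g_1^{(k)}(x)}{k!}\, y^k + \varepsilon_{g_1,r}(x,y), \qquad v_p(w) \ge M_1 + h.
\]
Crucially, because $g_1$ takes values in $\Zp$, both $g_1(x)$ and $g_1(x+y)$ lie in $\Zp$, so I may legitimately apply the $\sC^r$ definition of $g_2$ at the base point $g_1(x)$ with increment $w$:
\[
g_2(g_1(x+y)) = \sum_{j=0}^{\lfloor r \rfloor} \frac{g_2^{(j)}(g_1(x))}{j!}\, w^j + \varepsilon_{g_2,r}(g_1(x), w).
\]

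Next I would substitute the displayed expression for $w$ into $w^j$, expand by the multinomial theorem, and regroup. The terms of total degree $\le \lfloor r \rfloor$ in $y$ (ignoring the $\varepsilon_{g_1,r}$ contributions) assemble, coefficient by coefficient, into the Faà di Bruno derivatives $(g_2 \circ g_1)^{(j)}(x)/j!$ defined above; this is a purely combinatorial identity. What remains is the error term, and I would bound it by splitting into three parts: (a) pure monomials in $y$ of total degree strictly greater than $\lfloor r \rfloor$, (b) mixed terms in $(T_1)^a (e_1)^b$ with $b \ge 1$ where $T_1$ denotes the $g_1$ Taylor polynomial and $e_1 = \varepsilon_{g_1,r}(x,y)$, and (c) the outer error $\varepsilon_{g_2,r}(g_1(x),w)$.

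For (a), each such monomial has coefficient of $p$-adic valuation at least a fixed constant depending only on $v'_{\sC^r}(g_1)$ and $v'_{\sC^r}(g_2)$, while $v_p(y^k) \ge (\lfloor r \rfloor + 1) h$; after subtracting $rh$ this grows at least like $(1-\{r\})h$. For (b), each term carries at least one factor of $e_1$, hence valuation at least $M_1 + rh + C_{g_1,r}(h)$ plus a non-negative contribution from $T_1$; subtracting $rh$ leaves $M_1 + C_{g_1,r}(h) \to \infty$. For (c), the bound $v_p(w) \ge M_1 + h$ together with $v_p(\varepsilon_{g_2,r}(g_1(x), w)) \ge C_{g_2,r}(M_1 + h) + r(M_1 + h)$ yields, after subtracting $rh$, a quantity tending to infinity. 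Taking the minimum of the three, $C_{g_2 \circ g_1, r}(h) \to \infty$, as required.

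The main obstacle is the bookkeeping in step (a)/(b): one must verify that the purely polynomial part of the nested expansion matches Faà di Bruno \emph{exactly} up to degree $\lfloor r \rfloor$, and that the excess polynomial part (degrees $> \lfloor r \rfloor$) genuinely carries the extra factor of $h$ needed to beat $-rh$. The use of $g_1(\Zp) \subseteq \Zp$ is essential here: without it, we could not legitimately plug $w$ into $\varepsilon_{g_2,r}$, since the latter is defined only for arguments in $\Zp \times \Zp$.
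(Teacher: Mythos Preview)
The paper does not give its own proof here; it simply refers to \cite[Proposition~I.5.6]{Col10a}. Your Fa\`a di Bruno strategy---define the derivatives of $g_2\circ g_1$ by the Bell--polynomial formula, then control the remainder by nesting the two Taylor expansions and splitting into the three pieces (a), (b), (c)---is precisely the standard argument and is essentially how Colmez proceeds. You also correctly isolate the one place where the hypothesis $g_1(\Zp)\subseteq\Zp$ is used, namely to ensure that $(g_1(x),w)$ is a legitimate input for $\varepsilon_{g_2,r}$.

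One caveat on your bound in (c). You deduce $v_p(w)\geq M_1+h$ from the displayed expression for $w$: each term $\tfrac{g_1^{(k)}(x)}{k!}y^k$ with $1\leq k\leq\lfloor r\rfloor$ has valuation at least $M_1+kh\geq M_1+h$, and the tail $\varepsilon_{g_1,r}(x,y)$ has valuation at least $M_1+rh$. This yields $v_p(w)\geq M_1+h$ only when $r\geq 1$. For $0\leq r<1$ the polynomial part is empty, parts (a) and (b) are vacuous, and one only has $v_p(w)\geq M_1+rh$; plugging this into your estimate for (c) produces a stray negative term $r(r-1)h$ which your argument as written does not absorb. In the paper this never matters: every invocation of the proposition (Corollary~\ref{cor poly log} and the discussion of $L(k,\sL)$) has $g_1$ affine, namely $z\mapsto z-z_i$ or $z\mapsto 1-zz_i$ with $z_i\in\Zp$, so $w$ is an integral multiple of $y$ and $v_p(w)\geq v_p(y)\geq h$ holds on the nose.
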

 
 \begin{proof}
   See \cite[Proposition I.5.6]{Col10a}.
 \end{proof}

\subsubsection{Locally analytic functions and functions of class $\sC^r$}

\begin{Proposition}
  \label{LA vs Cr}
  If $h \in {\mathbb N}_{\geq 0}$ and $r \geq 0$, then $\mathrm{LA}_h(\zp,E) \subset \sC^r(\zp,E)$.
  Moreover, if $g \in \mathrm{LA}_h(\zp,E)$, then $v'_{\sC^r}(g) \geq v_{\mathrm{LA}_h}(g) - rh$.
\end{Proposition}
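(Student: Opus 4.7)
The plan is to use the local power-series expansions that come with membership in $\mathrm{LA}_h(\Zp,E)$ to define the candidate derivatives $g^{(j)}$ by term-by-term differentiation, and then to estimate the resulting error terms $\varepsilon_{g,r}(x,y)$ using the definition of $v_{\mathrm{LA}_h}$.

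Fix $g\in\mathrm{LA}_h(\Zp,E)$ and, for each $a$ in a set of representatives of $\Zp/p^h\Zp$, write $g(x)=\sum_{k\geq 0}a_k(g,a)(x-a)^k$ for $x\in a+p^h\Zp$, with $v:=v_{\mathrm{LA}_h}(g)=\inf_{a,k}\bigl(v_p(a_k(g,a))+kh\bigr)$ by Remark~\ref{alt def val}. For $0\leq j\leq\lfloor r\rfloor$ I define
\[
    g^{(j)}(x)\;=\;j!\sum_{k\geq j}\binom{k}{j}a_k(g,a)(x-a)^{k-j}\qquad(x\in a+p^h\Zp),
\]
which converges by the same ball-of-convergence estimate. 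First I would check the easy bound: since each term has valuation $\geq v_p(a_k(g,a))+(k-j)h\geq v-jh\geq v-rh$, one obtains $v_p(g^{(j)}(x)/j!)\geq v-rh$ for every $0\leq j\leq\lfloor r\rfloor$ and every $x\in\Zp$.

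Next I would analyze $\varepsilon_{g,r}(x,y)$. The \textbf{main step} is the case $y\in p^h\Zp$: then $x$ and $x+y$ lie in the common ball $a+p^h\Zp$, so I expand $g(x+y)=\sum_{k\geq 0}a_k(g,a)\sum_{j=0}^{k}\binom{k}{j}(x-a)^{k-j}y^{j}$ and subtract the Taylor sum, which exactly cancels the contributions with $j\leq\lfloor r\rfloor$ and leaves the tail
\[
    \varepsilon_{g,r}(x,y)\;=\;\sum_{j\geq\lfloor r\rfloor+1}y^{j}\sum_{k\geq j}\binom{k}{j}a_k(g,a)(x-a)^{k-j}.
\]
Setting $h':=v_p(y)\geq h$, the $(j,k)$-term has valuation at least $jh'+(k-j)h+v_p(a_k(g,a))\geq v+j(h'-h)$, hence $v_p(\varepsilon_{g,r}(x,y))\geq v+(\lfloor r\rfloor+1)(h'-h)$. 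Subtracting $rh'$ and rearranging gives $v_p(\varepsilon_{g,r}(x,y))-rh'\geq v-rh+(\lfloor r\rfloor+1-r)(h'-h)\geq v-rh$, since $\lfloor r\rfloor+1-r>0$ and $h'\geq h$. Letting $h'\to\infty$ shows $C_{g,r}(h')\to\infty$, which verifies that $g$ is of class $\sC^r$ with the prescribed derivatives.

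The only remaining case for the bound on $v'_{\sC^r}(g)$ is when $h':=v_p(y)<h$. Here $x+y$ may lie in a different ball from $x$, so the telescoping identity above is unavailable. I would instead use crude bounds term by term: $v_p(g(x+y))\geq v$ (apply the $\inf_x v_p(g)$ interpretation of $v_{B(a,h)}$ on the appropriate ball), and $v_p(g^{(j)}(x)y^{j}/j!)\geq v-jh+jh'=v+j(h'-h)$. Combining these, $v_p(\varepsilon_{g,r}(x,y))\geq v+\lfloor r\rfloor(h'-h)$, so
\[
    v_p(\varepsilon_{g,r}(x,y))-rh'\;\geq\;v-\lfloor r\rfloor h+(\lfloor r\rfloor-r)h'\;\geq\;v-rh,
\]
the final inequality using $h'\leq h$ together with $\lfloor r\rfloor\leq r$. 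Together with the pointwise bound on $g^{(j)}/j!$ already noted, this gives $v'_{\sC^r}(g)\geq v-rh=v_{\mathrm{LA}_h}(g)-rh$, completing the proof.

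The main obstacle is purely combinatorial: getting the binomial expansion of $g(x+y)-\sum_{j}g^{(j)}(x)y^j/j!$ to telescope cleanly into the tail indexed by $j\geq\lfloor r\rfloor+1$, and then keeping track of the two regimes $v_p(y)\geq h$ and $v_p(y)<h$ so that the uniform bound $v-rh$ emerges from both.
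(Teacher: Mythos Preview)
Your proof is correct and follows essentially the same route as the paper's: define the $g^{(j)}$ by termwise differentiation of the local power series, split according to whether $v_p(y)\geq h$ (where analyticity gives the Taylor tail $\sum_{j>\lfloor r\rfloor}g^{(j)}(x)y^j/j!$) or $v_p(y)<h$ (where crude termwise bounds suffice), and in each regime extract the uniform lower bound $v_{\mathrm{LA}_h}(g)-rh$. The paper's arithmetic is organized slightly differently in the second case---writing the bound as $v_{\mathrm{LA}_h}(g)+(h-v_p(y))(r-j)-rh$ rather than your $v-\lfloor r\rfloor h+(\lfloor r\rfloor-r)h'$---but the two rearrangements are equivalent and yield the same conclusion.
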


\begin{proof}
  Let $g \in \mathrm{LA}_h(\zp,E)$. On $a + p^h \zp$, we have $g(x) = \sum_{k =0}^{\infty} \tilde{a}_k(g,a) \left(\frac{x-a}{p^h}\right)^k$ so
  that $\frac{g^{(j)}(a)}{j!} = \frac{\tilde{a}_j(g,a)}{p^{hj}}$, hence  we have (see Remark~\ref{alt def val})
  \begin{eqnarray}
    \label{val der}
      v_p(\frac{g^{(j)}(x)}{j!}) \geq v_{\mathrm{LA}_h}(g) - jh
  \end{eqnarray}
  for all $j \geq 0$ and all $x \in \Zp$.
  On the other hand
  $$\varepsilon_{g,r}(x,y) = 
                          \begin{cases}
                             \sum_{j > r}  \frac{g^{(j)}(x)}{j!} y^j    & \text{if } v_p(y) \geq h \quad (\text{since $g$ is analytic on } x+p^h \zp) \\
                             g(x+y) - \sum_{j=0}^{\lfloor r \rfloor} \frac{g^{(j)}(x)}{j!} y^j & \text{if } v_p(y) < h \quad (\text{by definition}). 
                           \end{cases}
  $$
  So if $v_p(y) \geq h$, then by \eqref{val der}
  \begin{eqnarray}
    \label{vareps estimate}
     v_p(\varepsilon_{g,r}(x,y)) \geq  v_{\mathrm{LA}_h}(g) - jh + jv_p(y) 
                 \geq  v_{\mathrm{LA}_h}(g)  + (\lfloor r \rfloor + 1)(v_p(y) -h)
  \end{eqnarray}
  since $j \geq \lfloor r \rfloor + 1$, so that for all $h' \in {\mathbb N}_{\geq 0}$ and for $v_p(y) \geq h'$, we have
  $$v_p(\varepsilon_{g,r}(x,y)) - rh'  \geq v_p(\varepsilon_{g,r}(x,y)) - rv_p(y) 
                 \geq  v_{\mathrm{LA}_h}(g)  + (\lfloor r \rfloor + 1 - r)v_p(y) -h (\lfloor r \rfloor + 1).$$
  Letting $v_p(y) \rightarrow \infty$, the second term on the right tends to $\infty$ since $\lfloor r \rfloor + 1 - r > 0$
  (the last term is constant since $h$ is fixed). Thus the infimum over $y \in p^{h'} \zp$ of the term on the
  left is $\infty$. Letting $h' \rightarrow \infty$, we see $g \in \sC^r(\zp, E)$.
  
  We now show that $v'_{\sC^r}(g) \geq v_{\mathrm{LA}_h}(g) - hr$.
  The first term(s) in \eqref{val C^r prime}
  are bounded below by $v_{\mathrm{LA}_h}(g) - hr$ by \eqref{val der} since $-j \geq -r$. We estimate
  the second term in \eqref{val C^r prime}. If $v_p(y) \geq h$, then by \eqref{vareps estimate}, we have
  $$v_p(\varepsilon_{g,r}(x,y)) -r v_p(y) 
                 \geq  v_{\mathrm{LA}_h}(g)  + (\lfloor r \rfloor + 1)(v_p(y) -h) - rv_p(y) \geq v_{\mathrm{LA}_h}(g)  -hr$$
  since $\lfloor r \rfloor + 1 > r$. If $v_p(y) < h$, then adding and subtracting $rh$ and noting that 
  by the second formula for $\varepsilon_{g,r}(x,y)$ above that only $g(x+y)$ and $\frac{g^{(j)}(x)}{j!} y^j$ appear so
  that we may use \eqref{val der} above, we have
  $$v_p(\varepsilon_{g,r}(x,y)) -r v_p(y)  \geq v_{\mathrm{LA}_h}(g) - jh - r v_p(y) + jv_p(y) -rh+rh 
       =  v_{\mathrm{LA}_h}(g) + (h-v_p(y))(r-j) - rh$$
  which is again at least  $v_{\mathrm{LA}_h}(g) -rh$ since the second  term on the extreme right is non-negative.
  taking the infimum over all $x,y \in \Zp$, we see that the second term in \eqref{val C^r prime} is also bounded below by
  $v_{\mathrm{LA}_h}(g) -rh$, as desired.
\end{proof}

\subsubsection{Amplitude coefficients of locally polynomial functions}

Let $I \subset {\mathbb N}_{\geq 0}$ .
A function $g : \zp \rightarrow E$ is said to be {\it locally polynomial} (with degrees of the monomials in $I$) if there is 
an $h \in  {\mathbb N}_{\geq 0}$ such that $g |_{a+p^h\zp}(x) = \sum_{i\in I} a_i(g,a) x^i$ for all $a \in \Zp$ and some 
coefficients $a_i(g,a) \in E$.
Set 
\begin{eqnarray*}
  \mathrm{LP}^I_h(\zp,E) & = & \{ g : \zp \rightarrow E \mid g \text{ is locally polynomial (for $h$)} \} \\
  \mathrm{LP}^I(\zp,E) & = & \bigcup_{h \geq 0} \mathrm{LP}^I_h(\zp,E).
\end{eqnarray*}
We extend the definition to $I \subset {\mathbb R}$, by considering only indices $i \in I \cap {\mathbb N}_{\geq 0}$.

\begin{Proposition}
  Let $r \geq 0$. 
  \begin{enumerate}
  \item [i)] The ${\mathbbm 1}_{i + p^{l(i)} \zp}(x) \cdot  \left( \frac{x - i}{p^{l(i)}} \right)^k $
                for $0 \leq i \leq p^{h}-1$ and $0\leq k \leq \lfloor r \rfloor$ form a basis of  $\mathrm{LP}^{[0,r]}_h(\zp,E)$.
  \item [ii)] The ${\mathbbm 1}_{i + p^{l(i)} \zp}(x) \cdot  \left( \frac{x - i}{p^{l(i)}} \right)^k $
                for $i \in {\mathbb N}_{\geq 0}$ and $0\leq k \leq \lfloor r \rfloor$ form a basis of  $\mathrm{LP}^{[0,r]}(\zp,E)$.
  \end{enumerate}
\end{Proposition}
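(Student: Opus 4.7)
The plan is to reduce part (i) to the corresponding wavelet basis statement for $\mathrm{LC}_h(\zp,E)$ established earlier, by working degree-by-degree and exploiting a block-triangular structure on the change-of-basis matrix. Part (ii) will then follow by a simple nesting argument.

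First, I would verify that the ``uniform-level'' family
\[
e_{a,m}(x) := {\mathbbm 1}_{a + p^h\zp}(x) \cdot \left( \frac{x-a}{p^h} \right)^m, \qquad 0 \leq a \leq p^h - 1, \ 0 \leq m \leq \lfloor r \rfloor,
\]
is a basis of $\mathrm{LP}^{[0,r]}_h(\zp,E)$: on each residue class $a + p^h\zp$ the restriction of an element is a polynomial of degree $\leq \lfloor r \rfloor$ in $x$, and so uniquely a polynomial of degree $\leq \lfloor r \rfloor$ in $\frac{x-a}{p^h}$. In particular $\dim \mathrm{LP}^{[0,r]}_h(\zp,E) = p^h(\lfloor r \rfloor + 1)$, matching the number of proposed basis vectors.

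Next, I would expand each proposed basis vector $f_{i,k}(x) := {\mathbbm 1}_{i + p^{l(i)}\zp}(x) \cdot \left( \frac{x-i}{p^{l(i)}} \right)^k$ in the $e_{a,m}$. Decomposing $i + p^{l(i)}\zp = \bigsqcup_{j=0}^{p^{h-l(i)}-1}\bigl(i + jp^{l(i)} + p^h\zp\bigr)$, setting $a = i + jp^{l(i)}$, and using $\tfrac{x-i}{p^{l(i)}} = j + p^{h-l(i)}\cdot\tfrac{x-a}{p^h}$ together with the binomial theorem, one obtains
\[
f_{i,k} \;=\; \sum_{\substack{0 \leq a \leq p^h - 1 \\ a \equiv i \bmod p^{l(i)}}} \sum_{m=0}^{k} \binom{k}{m}\left(\tfrac{a-i}{p^{l(i)}}\right)^{k-m} p^{(h-l(i))m}\, e_{a,m},
\]
in which only $m \leq k$ appears. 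If both bases are ordered with degree outermost (row blocks indexed by $k$, column blocks by $m$, within each block ordered by $i$ resp.\ $a$), the change-of-basis matrix is block lower triangular, and its $k$-th diagonal block factors as $D^{(k)} M$, where $D^{(k)}$ is diagonal with entries $p^{(h-l(i))k}$ and $M_{i,a} = [a \equiv i \bmod p^{l(i)}]$. But $M$ is precisely the change-of-basis matrix already proved lower triangular with $1$'s on the diagonal in the wavelet basis proof for $\mathrm{LC}_h(\zp,E)$, hence invertible, and $D^{(k)}$ is invertible because $p \neq 0$ in $E$. So every diagonal block is invertible, the full matrix is invertible, and part (i) follows.

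For part (ii), the key observation is that the definition of $f_{i,k}$ depends only on $i$ (through $l(i)$) and on $k$, and not on the ambient level $h$. Thus the bases produced by (i) for successive values of $h$ are nested, and their union equals $\{f_{i,k} : i \in {\mathbb N}_{\geq 0},\ 0 \leq k \leq \lfloor r \rfloor\}$. Since $\mathrm{LP}^{[0,r]}(\zp,E) = \bigcup_{h \geq 0} \mathrm{LP}^{[0,r]}_h(\zp,E)$ by definition, this union is a basis. There is no essential obstacle: the only nontrivial piece is the binomial-and-triangularity bookkeeping above, which plugs directly into the already-established wavelet decomposition.
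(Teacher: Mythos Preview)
Your proof is correct and is precisely the natural fleshing-out of the paper's one-line argument, which simply states that the proof is ``similar to the proof of Proposition~\ref{LC basis}.'' You have made that similarity explicit by introducing the block-triangular structure indexed by polynomial degree and observing that each diagonal block reduces (up to an invertible diagonal factor) to the change-of-basis matrix already handled in the locally constant case.
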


\begin{proof}
  The proof is similar to the proof of Proposition~\ref{LC basis}.
\end{proof}

We define for $i,k \in {\mathbb N}_{\geq 0}$ the following rescaled locally polynomial functions
$$ e_{i,k,r} := p^{\lfloor l(i) r \rfloor} \cdot {\mathbbm 1}_{i + p^{l(i)} \zp}(x) \cdot  \left( \frac{x - i}{p^{l(i)}} \right)^k.$$
The following lemma is immediate.

\begin{lemma}
  Let $r \geq 0$. 
  \begin{enumerate}
  \item [i)] The $e_{i,k,r}$
                for $0 \leq i \leq p^{h}-1$ and $0\leq k \leq \lfloor r \rfloor$ form a basis of  $\mathrm{LP}^{[0,r]}_h(\zp,E)$.
  \item [ii)] The $e_{i,k,r}$ for $i \in {\mathbb N}_{\geq 0}$ and $0\leq k \leq \lfloor r \rfloor$ form a basis of  $\mathrm{LP}^{[0,r]}(\zp,E)$.
  \end{enumerate}
\end{lemma}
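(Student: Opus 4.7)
The plan is to deduce this lemma directly from the immediately preceding proposition. That proposition asserts that the functions
\[
f_{i,k} \;:=\; \mathbbm{1}_{i + p^{l(i)} \zp}(x) \cdot \left( \frac{x - i}{p^{l(i)}} \right)^{\!k}
\]
form a basis of $\mathrm{LP}^{[0,r]}_h(\zp,E)$ (respectively $\mathrm{LP}^{[0,r]}(\zp,E)$) when indexed by the stated range of $(i,k)$. By definition
\[
e_{i,k,r} \;=\; p^{\lfloor l(i) r \rfloor}\, f_{i,k},
\]
so each $e_{i,k,r}$ differs from the corresponding $f_{i,k}$ only by the nonzero scalar $p^{\lfloor l(i) r \rfloor} \in E^{*}$.

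Thus the change of basis matrix from $(f_{i,k})$ to $(e_{i,k,r})$ is diagonal with nonzero entries on the diagonal, hence invertible over $E$. Rescaling each vector of a basis by a nonzero scalar produces another basis of the same vector space, so both parts (i) and (ii) follow. The only nontrivial content is already contained in the previous proposition; this lemma is just a rescaling, which is why it is stated as immediate.
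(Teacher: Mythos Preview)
Your proposal is correct and matches the paper's approach exactly: the paper simply declares the lemma ``immediate'' from the preceding proposition, and your argument---that the $e_{i,k,r}$ are nonzero scalar multiples of the basis elements from that proposition---is precisely the intended justification.
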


We denote by $b_{i,k}(g)$, the {\it amplitude coefficients} of $g$, the coefficients
of $g \in \mathrm{LP}^{[0,r]}(\zp,E)$ in the basis $e_{i,k,r}$
for $i \in {\mathbb N}_{\geq 0}$ and $0 \leq k \leq \lfloor r \rfloor$.

\subsubsection{Decomposition of $\sC^r$ functions in wavelets}

The following result allows us to approximate functions of type $\sC^r$ by locally polynomial functions.
It is the starting point of many of the computations in \cite{CG23}.

\begin{Proposition}
  \label{prop loc poly}
  Let $r \geq 0$. If $g \in \sC^r(\zp,E)$, set for $h \in {\mathbb N}_{\geq 0}$
  $$\tilde{g}_h(x) := \sum_{i=0}^{p^h-1} {\mathbbm 1}_{i+p^h\zp}(x) \cdot \sum_{k=0}^{\lfloor r \rfloor} \frac{g^{(k)}(i)}{k!} (x-i)^k.$$
  Then
  \begin{enumerate}
    \item [i)]  $v_{\mathrm{LA}_{h+1}}(\tilde{g}_{h+1} - \tilde{g}_h) \geq rh + C_{g,r}(h) - C(\lfloor r \rfloor)$
    \item [ii)] $\tilde{g}_h \rightarrow g$ in $\sC^r(\zp,E)$  as $h \rightarrow \infty$.
  \end{enumerate}
\end{Proposition}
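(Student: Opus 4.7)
The plan is to prove (i) by direct computation on each disk of radius $p^{-(h+1)}$, and then derive (ii) from (i) by combining the bound with Proposition~\ref{LA vs Cr} and checking that the $\sC^r$-Cauchy limit coincides with $g$ pointwise.

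For (i), fix $0 \leq i \leq p^h-1$ and $0 \leq j_0 \leq p-1$, and set $a = i + j_0 p^h$, so that $a + p^{h+1}\zp$ is a typical disk of radius $p^{-(h+1)}$, with $y_0 := a - i \in p^h\zp$. On this disk, for $x = a + y$ with $v_p(y) \geq h+1$, the function $\tilde{g}_h$ is the Taylor polynomial based at $i$ while $\tilde{g}_{h+1}$ is the Taylor polynomial based at $a$. Expanding $(y_0 + y)^k$ by the binomial theorem and regrouping in powers of $y$, one checks
$$\tilde{g}_h(a+y) - \tilde{g}_{h+1}(a+y) = \sum_{j=0}^{\lfloor r \rfloor} \frac{y^j}{j!}\left( \sum_{u=0}^{\lfloor r \rfloor - j} g^{(j+u)}(i)\frac{y_0^u}{u!} - g^{(j)}(a) \right).$$
Since $v_p(y_0) \geq h$, the estimate \eqref{vareps estimate again} obtained in the preceding proposition (with $x = i$, $y = y_0$) bounds the valuation of the quantity in the large parenthesis by $(r-j)h + C_{g,r}(h) - C(\lfloor r \rfloor)$. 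Recasting the above as a series in $(y/p^{h+1})^j$ and using Remark~\ref{alt def val} to compute $v_{\mathrm{LA}_{h+1}}$ on this disk, the $j$-th coefficient acquires an extra $(h+1)j$ in valuation, giving the lower bound $j + rh + C_{g,r}(h) - C(\lfloor r \rfloor)$, whose infimum over $0 \leq j \leq \lfloor r \rfloor$ is $rh + C_{g,r}(h) - C(\lfloor r \rfloor)$. Taking the infimum over $a$ yields~(i).

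For (ii), first apply Proposition~\ref{LA vs Cr} to (i) to pass from $v_{\mathrm{LA}_{h+1}}$ to $v'_{\sC^r}$:
$$v'_{\sC^r}(\tilde{g}_{h+1} - \tilde{g}_h) \geq v_{\mathrm{LA}_{h+1}}(\tilde{g}_{h+1} - \tilde{g}_h) - r(h+1) \geq C_{g,r}(h) - C(\lfloor r \rfloor) - r.$$
Since $C_{g,r}(h) \to \infty$ by the definition of $\sC^r$, the sequence $(\tilde{g}_h)$ is Cauchy in $\sC^r(\zp,E)$ and converges to some $\tilde{g}_\infty \in \sC^r(\zp,E)$. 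To identify $\tilde{g}_\infty$ with $g$, it suffices to show pointwise convergence $\tilde{g}_h(x) \to g(x)$ for each $x \in \zp$, since $\sC^r$-convergence implies convergence in $\sC^0$ and hence pointwise convergence. Let $i_h$ be the unique integer in $\{0, \ldots, p^h-1\}$ with $x - i_h \in p^h\zp$; then by the definition of $\tilde{g}_h$ and of $\varepsilon_{g,r}$,
$$\tilde{g}_h(x) - g(x) = -\varepsilon_{g,r}(i_h, x - i_h),$$
and since $v_p(x - i_h) \geq h$, the definition of $\sC^r$ gives $v_p(\varepsilon_{g,r}(i_h, x-i_h)) \geq C_{g,r}(h) + rh \to \infty$. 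Hence $\tilde{g}_h \to g$ pointwise, so $\tilde{g}_\infty = g$ and (ii) follows.

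The main obstacle is the algebraic bookkeeping in (i): one has to carry out the reindexing of the binomial expansion cleanly so that the inner sum matches the expression appearing in \eqref{vareps estimate again}, and then track the renormalization from the variable $y$ to $y/p^{h+1}$ correctly. Once this is set up, (ii) is essentially formal given Proposition~\ref{LA vs Cr}.
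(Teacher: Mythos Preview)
Your proof is correct and follows essentially the same route as the paper. Two small remarks. In part (i), when you pass to the variable $y/p^{h+1}$ you have silently dropped the $1/j!$ in front of $y^j$: the honest lower bound on the $j$-th coefficient is $j - v_p(j!) + rh + C_{g,r}(h) - C(\lfloor r \rfloor)$, not $j + rh + \cdots$. This is harmless since $j - v_p(j!) \geq 0$ and the infimum is still attained at $j=0$, but it is worth stating. In part (ii), your identification of the limit differs from the paper's: the paper observes that $\tilde g_h(i) = g(i)$ for all integers $0 \leq i \leq p^h - 1$, so the limit agrees with $g$ on $\mathbb N_{\geq 0}$ and hence on $\zp$ by continuity, whereas you use the direct pointwise identity $\tilde g_h(x) - g(x) = -\varepsilon_{g,r}(i_h, x - i_h)$ together with the $\sC^r$ hypothesis. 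Both arguments are fine; yours is a little more direct.
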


\begin{proof}
We prove i). Write
\begin{eqnarray}
  \label{diff of fns}
  \tilde{g}_{h+1} - \tilde{g}_h  = \sum_{i=0}^{p^h-1} \sum_{a = 0}^{p-1}  {\mathbbm 1}_{i+ap^h + p^{h+1}\zp}(x) \cdot 
  \left( \sum_{k=0}^{\lfloor r \rfloor}  \frac{g^{(k)}(i+ap^h)}{k!} (x-i-ap^h)^k 
     - \sum_{k=0}^{\lfloor r \rfloor}  \frac{g^{(k)}(i)}{k!} (x-i)^k \right).
\end{eqnarray}
Now subtracting and adding $ap^h$ we may write the last sum as
\begin{eqnarray*}
   \sum_{k=0}^{\lfloor r \rfloor} \sum_{j=0}^k  \frac{g^{(k)}(i)}{j!(k-j)!} (x-i-ap^h)^j (ap^h)^{k-j} & = & \sum_{j=0}^{\lfloor r \rfloor} \sum_{k=j}^{\lfloor r \rfloor}  \frac{g^{(k)}(i)}{j!(k-j)!} (x-i-ap^h)^j (ap^h)^{k-j} \\
  & = &  \sum_{j=0}^{\lfloor r \rfloor} \sum_{l=0}^{\lfloor r \rfloor -j}  \frac{g^{(l+j)}(i)}{j! l!} (x-i-ap^h)^j (ap^h)^{l}
\end{eqnarray*}
by substituting $l= k-j$. Thus, substituting for this sum and summing over $j$ instead of $k$ in the other inner sum in
\eqref{diff of fns}, 
we may rewrite the inner term on the RHS 
of \eqref{diff of fns} as
$$ \sum_{j=0}^{\lfloor r \rfloor} \frac{p^{j(h+1)}}{j!}  \left( g^{(j)}(i+ap^h) - \sum_{l=0}^{\lfloor r \rfloor -j}  \frac{g^{(l+j)}(i)}{l!} (ap^h)^{l} \right)  \left(\frac{x-i-ap^h}{p^{h+1}} \right)^j.$$
By \eqref{vareps estimate again}, the term in the large parentheses has $v_p$ at least $(r-j)h + C_{g,r}(h) 
           - C(\lfloor r \rfloor)$.
Thus by \eqref{diff of fns} and Remark~\ref{alt def val},  we have
\begin{eqnarray*}
  v_{\mathrm{LA}_{h+1}}(\tilde{g}_{h+1} - \tilde{g}_h) & \geq & \inf_{j \leq r} \left( v_p( \frac{p^{j(h+1)}}{j!}) +  (r-j)h + C_{g,r}(h) 
           - C(\lfloor r \rfloor)  \right) \\
          & = &  \inf_{j \leq r} (j - v_p(j!) + rh + C_{g,r}(h)    - C(\lfloor r \rfloor)) = rh + C_{g,r}(h)    - C(\lfloor r \rfloor)
\end{eqnarray*}
since $j-v_p(j!) \geq 0$, so the infimum is attained at the $j = 0$ term. This proves i).

We now turn to ii). By Proposition~\ref{LA vs Cr}, we have
$$v'_{\sC^r}(\tilde{g}_{h+1} - \tilde{g}_h) \geq v_{\mathrm{LA}_{h+1}}(\tilde{g}_{h+1} - \tilde{g}_h) -r(h+1) \geq C_{g,r}(h)    - C(\lfloor r \rfloor)) - r$$
by i). Since the RHS goes to $\infty$ as $h \rightarrow \infty$, we see that $\{\tilde{g}_h\}$ is a Cauchy sequence.
But $\sC^r(\zp,E)$ is complete (it's an $E$-Banach!) so there is a function $\tilde{g} \in \sC^r(\zp,E)$ such
that $\tilde{g}_h \rightarrow \tilde{g}$. But 
$$\tilde{g}_h(i) = g^{(0)}(i) = g(i) \quad \text{for all } 0\leq i \leq p^h-1.$$ 
So $\tilde{g}(i) = g(i)$ for all such $i$. Letting $h \rightarrow \infty$ gives
$\tilde{g} = g$ on ${\mathbb N}_{\geq 0}$. By continuity, we get $\tilde{g} = g$, proving ii).
\end{proof}

\begin{theorem}
  \label{theorem bb}
  The family $e_{i,k,r}$ for $i \in {\mathbb N}_{\geq 0}$ and $0 \leq k \leq \lfloor r \rfloor$ is a Banach basis of 
  $\sC^r(\zp,E)$
\end{theorem}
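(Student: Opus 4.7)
The plan is to produce a topological isomorphism
\[
\Phi: l_\infty^0(\mathbb{N}_{\geq 0} \times \{0,\ldots,\lfloor r \rfloor\}, E) \;\longrightarrow\; \sC^r(\zp,E), \quad (a_{i,k}) \longmapsto \sum_{i,k} a_{i,k}\, e_{i,k,r},
\]
whose valuations are comparable up to a constant depending only on $r$. Continuity of $\Phi$ is immediate: a direct inspection shows $e_{i,k,r} \in \mathrm{LA}_{l(i)}(\zp,E)$ with $v_{\mathrm{LA}_{l(i)}}(e_{i,k,r}) = \lfloor l(i)\, r \rfloor$, and then Proposition~\ref{LA vs Cr} yields $v'_{\sC^r}(e_{i,k,r}) \geq \lfloor l(i)\, r \rfloor - r\, l(i) \geq -r$ uniformly in $i,k$. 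Consequently the series $\sum a_{i,k} e_{i,k,r}$ converges in $\sC^r$ whenever $a_{i,k} \to 0$, with $v'_{\sC^r}(\Phi(a)) \geq \inf_{i,k} v_p(a_{i,k}) - r$, giving one half of the valuation comparison.

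For surjectivity I would invoke Proposition~\ref{prop loc poly}: $\tilde{g}_h \to g$ in $\sC^r(\zp,E)$. The structural observation to exploit is that on every ``zero-digit'' sub-coset $j + p^h\zp$ with $0 \leq j \leq p^{h-1}-1$, the functions $\tilde{g}_h$ and $\tilde{g}_{h-1}$ both reduce to the same Taylor polynomial of $g$ based at $j$; hence $\tilde{g}_h - \tilde{g}_{h-1}$ is supported precisely on the cosets $i + p^h\zp$ with $p^{h-1} \leq i \leq p^h - 1$, i.e.\ exactly those $i$ with $l(i) = h$ that index the new level-$h$ basis elements. Setting $i' := i \bmod p^{h-1}$ and $\delta := i - i'$, re-expanding the parent Taylor polynomial $P_{i'}(x) = \sum_k g^{(k)}(i')(x-i')^k/k!$ about the new base point $i$ and collecting like powers produces
\[
(\tilde{g}_h - \tilde{g}_{h-1})(x) = \sum_{k=0}^{\lfloor r \rfloor} \frac{(x-i)^k}{k!}\, \varepsilon_{g^{(k)}, r-k}(i',\delta) \qquad \text{on } i+p^h\zp,
\]
and substituting $\mathbbm{1}_{i+p^h\zp}(x)(x-i)^k = p^{hk-\lfloor hr \rfloor}\, e_{i,k,r}(x)$ yields $\tilde{g}_h - \tilde{g}_{h-1} = \sum_{l(i)=h,\,k} b_{i,k}(g)\, e_{i,k,r}$ with
\[
b_{i,k}(g) \;=\; \frac{p^{hk-\lfloor hr \rfloor}}{k!}\; \varepsilon_{g^{(k)}, r-k}(i',\delta).
\]
Telescoping from $\tilde{g}_0 = \sum_k (g^{(k)}(0)/k!)\, e_{0,k,r}$ gives $g = \sum_{i,k} b_{i,k}(g)\, e_{i,k,r}$. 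Reading Proposition~\ref{prop loc poly}(i) through Remark~\ref{alt def val} identifies $b_{i,k}(g)\, p^{\lfloor hr \rfloor}$ with the Taylor coefficient $\tilde{a}_k(\tilde{g}_h - \tilde{g}_{h-1}, i)$, so that
\[
\inf_{l(i)=h,\, k} v_p(b_{i,k}(g)) \;\geq\; r(h-1) + C_{g,r}(h-1) - C(\lfloor r \rfloor) - \lfloor hr \rfloor \;\geq\; C_{g,r}(h-1) - C(\lfloor r \rfloor) - r,
\]
which tends to $\infty$ as $h \to \infty$ by the definition of $\sC^r$; this establishes the null condition $b_{i,k}(g) \to 0$, and, using the elementary inequality $C_{g,r}(h-1) \geq v'_{\sC^r}(g)$, also the reverse bound $\inf_{i,k} v_p(b_{i,k}(g)) \geq v'_{\sC^r}(g) - r - C(\lfloor r \rfloor)$ (the $i=0$ coefficients being controlled separately by $v_p(g^{(k)}(0)/k!) \geq v'_{\sC^r}(g)$).

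Injectivity of $\Phi$, and hence uniqueness of the expansion, then follows because each $b_{i,k}$ is a continuous linear functional on $\sC^r(\zp,E)$, being a finite linear combination of point-evaluations of $g^{(j)}$ for $j \leq \lfloor r \rfloor$, and one verifies directly that $b_{i',k'}(e_{i,k,r}) = \delta_{(i,k),(i',k')}$ by splitting on whether the supports $i + p^{l(i)}\zp$ and $i' + p^{l(i')}\zp$ meet. I expect the main obstacle to be the bookkeeping in the surjectivity step: correctly identifying the support of $\tilde{g}_h - \tilde{g}_{h-1}$ among the cosets of level $h$, carrying out the Taylor re-expansion cleanly enough to surface the remainder $\varepsilon_{g^{(k)},r-k}$, and matching the normalization factor $p^{\lfloor l(i)\, r \rfloor}$ built into $e_{i,k,r}$ against the $p$-adic bound supplied by Proposition~\ref{prop loc poly}(i).
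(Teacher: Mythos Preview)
Your proposal is correct and follows precisely the approach the paper points to: the paper's own proof is just a reference to \cite[Theorem I.5.14]{Col10a} together with the remark that Proposition~\ref{prop loc poly} is the key input, and you have faithfully reconstructed Colmez's argument by telescoping $g = \tilde g_0 + \sum_h(\tilde g_h - \tilde g_{h-1})$, identifying the support of each increment with the level-$h$ wavelets, and extracting the valuation estimates from Proposition~\ref{prop loc poly}(i) via Remark~\ref{alt def val}. One cosmetic point: your uniform bound $v'_{\sC^r}(e_{i,k,r}) \geq -r$ should be $\geq -1$ (from $\lfloor l(i)r\rfloor - l(i)r > -1$), which is what you need and is only weaker than your stated bound when $r<1$; this does not affect the argument.
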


\begin{proof}
  This can be proved using the above proposition (and some auxiliary results). 
  For the details, see \cite[Theorem I.5.14]{Col10a}.
\end{proof}

The basis $e_{i,k,r}$ is called a {\it basis of wavelets} of $\sC^r(\zp,E)$. The coefficients $b_{i,k}(g)$ of
$g \in \sC^r(\zp,E)$ in this basis are called the {\it amplitude coefficients} of $g$.

\subsubsection{Mahler coefficients of functions of class $\sC^r$}

Finally, we state an important theorem which allows one to show that the topologies defined
by  two different valuations on $\sC^r(\Zp,E)$ are the same.
The proof is a bit long so we omit the proof.

\begin{theorem}
  \label{theorem comparing two val C^r}
  Let $r \geq 0$ and let $g \in \sC^r(\zp,E)$.
  If $g(x) = \sum_{n=0}^\infty a_n(g) {x \choose n}$ is the Mahler expansion of $g$, then 
  $v_p(a_n(g)) - r l(n) \rightarrow \infty$
  as $n \rightarrow \infty$.
  If $$v_{\sC^r}(g) = \inf_{n \geq 0} (v_p(a_n(g)) - r l(n))$$
  then $v_{\sC^r} \sim v'_{\sC^r}$ differ at most by a finite constant.
\end{theorem}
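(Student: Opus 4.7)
The plan is to deduce both the decay $v_p(a_n(g)) - r l(n) \to \infty$ and the equivalence $v_{\sC^r} \sim v'_{\sC^r}$ by pairing the Mahler basis $({x \choose n})$ with the wavelet Banach basis $(e_{i,k,r})$ from Theorem~\ref{theorem bb}.

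First I would establish two uniform estimates.
(A)~For each $n \geq 0$, $v'_{\sC^r}({x \choose n}) \geq -r l(n) - C_1$ with $C_1$ a constant depending only on $r$. Since ${x \choose n}$ is a polynomial (hence in $\mathrm{LA}_h$ for every $h$), Proposition~\ref{LA vs Cr} reduces the problem to bounding $v_{\mathrm{LA}_{l(n)}}({x \choose n})$; this Taylor-coefficient bound then follows from the Vandermonde identity ${x+y \choose n} = \sum_k {x \choose n-k}{y \choose k}$ together with the integrality ${x \choose n-k} \in \Zp$ for $x \in \Zp$, with the $rl(n)$ correction arising from the shift $-rh$ in Proposition~\ref{LA vs Cr} at $h = l(n)$.
(B)~For each wavelet, $v_p(a_n(e_{i,k,r})) \geq r l(n) - C_2$ uniformly in $(i,k,n)$, and moreover for each fixed $(i,k)$ one has $v_p(a_n(e_{i,k,r})) - r l(n) \to \infty$ as $n \to \infty$. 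Formula~\eqref{coeff formula} expresses $a_n(e_{i,k,r})$ as a restricted alternating binomial sum along the arithmetic progression $i + p^{l(i)}\Zp$; both assertions follow from a $p$-adic analysis of such sums, exploiting the vanishing of high-order finite differences on low-degree polynomials and Kummer's theorem for $v_p({n \choose m})$.

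Given (A) and (B), the equivalence follows from the ultrametric inequality. For one direction: if $v_{\sC^r}(g) \geq M$, i.e., $v_p(a_n(g)) \geq M + r l(n)$ for all $n$, then the Mahler expansion $g = \sum_n a_n(g){x \choose n}$ from Theorem~\ref{mahler} converges in $\sC^r$ by (A), giving $v'_{\sC^r}(g) \geq M - C_1$. For the other direction: the wavelet expansion $g = \sum_{i,k} b_{i,k}(g)\, e_{i,k,r}$ of Theorem~\ref{theorem bb} has coefficients satisfying $\inf_{i,k} v_p(b_{i,k}(g)) \geq v'_{\sC^r}(g) - C_3$, and applying the continuous linear functional $a_n(\cdot)$ term-by-term together with (B) yields $v_p(a_n(g)) - r l(n) \geq v'_{\sC^r}(g) - C_2 - C_3$ for all $n$. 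The decay $v_p(a_n(g)) - r l(n) \to \infty$ is then obtained by splitting $g = g_{\leq T} + g_{>T}$ where $g_{\leq T}$ is the finite partial sum over $(i,k)$ with $v_p(b_{i,k}(g)) \leq T$: the tail contributes at least $T - C$ to $v_p(a_n(\cdot)) - r l(n)$ by the uniform bound just proven, while each of the finitely many terms in $g_{\leq T}$ contributes an amount tending to infinity by the refinement in (B). Letting first $n$ then $T$ tend to infinity concludes.

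The main obstacle is establishing estimate (B), and especially its refinement that $v_p(a_n(e_{i,k,r})) - r l(n) \to \infty$ for fixed $(i,k)$. This requires a delicate $p$-adic analysis of restricted alternating binomial sums $\sum_{j} (-1)^{n-i-jp^{l(i)}} {n \choose i + j p^{l(i)}} j^k$, and it is precisely at this step that the shift by $r l(n)$ in the definition of $v_{\sC^r}$ emerges as the intrinsic scaling matching $\sC^r$-regularity.
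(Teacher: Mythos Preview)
The paper does not give a proof: its entire argument is the single sentence ``See the proof of \cite[Theorem I.5.17]{Col10a}.'' So there is nothing in-paper to compare against; your strategy of playing the Mahler basis off against the wavelet Banach basis of Theorem~\ref{theorem bb} is sound and is indeed the route taken in Colmez's original.

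Two points warrant attention. First, a logical reordering: your ``direction 1'' relies on the Mahler series $\sum_n a_n(g)\binom{x}{n}$ converging in the $v'_{\sC^r}$-topology, but estimate (A) alone gives only the uniform lower bound $v'_{\sC^r}\bigl(a_n(g)\binom{x}{n}\bigr) \geq M - C_1$, not that the terms tend to $+\infty$. Convergence needs the decay statement, which you obtain from direction~2 via the wavelet expansion. So direction~2 and the decay must come first; direction~1 then follows.

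Second, your sketch of (A) is more optimistic than the argument warrants. Via Proposition~\ref{LA vs Cr} at $h = l(n)$ you reduce to $v_{\mathrm{LA}_{l(n)}}\binom{x}{n} \geq -C_1$, and Vandermonde plus integrality of $\binom{a}{n-k}$ reduce this to bounding the \emph{monomial} coefficients of $\binom{p^h t}{k}$ as a polynomial in $t$, for $k < p^h$. These coefficients are $p^{hm} s(k,m)/k!$ with $s(k,m)$ a signed Stirling number of the first kind; the naive estimate $v_p(s(k,m)/k!) \geq -v_p(k!)$ yields only $v_p \geq hm - v_p(k!)$, which for small $m$ is about $-k/(p-1)$ and is unbounded in $n$. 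What actually makes (A) work is a genuine cancellation: writing $(1+x)^{p^h t} = (1+y)^t$ with $y = (1+x)^{p^h} - 1$, Kummer's theorem gives $v_p\binom{p^h}{i} \geq 1$ for $1 \leq i < p^h$, so $[x^k]y^j$ has $v_p \geq j$ for $k < p^h$, and combining with $v_p(j!) \leq j/(p-1)$ shows the monomial coefficients of $\binom{p^h t}{k}$ lie in $\Zp$. This is a real lemma comparable in weight to your estimate (B), not a consequence of ``integrality of $\binom{x}{n-k}$'' alone.
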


\begin{proof}
    See the proof of \cite[Theorem I.5.17]{Col10a}.
 \end{proof}

\begin{Corollary} 
   The $p^{ \lfloor r l(n) \rfloor} {x \choose n}$ 
   form a Banach basis of $\sC^r(\zp,E)$.
\end{Corollary}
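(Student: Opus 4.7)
The plan is to derive this corollary directly from Theorem \ref{theorem comparing two val C^r} and the general framework of Banach bases, by renormalizing the Mahler basis.

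First I would show that the proposed family is well-defined as an expansion. Given $g \in \sC^r(\zp, E)$, Theorem \ref{mahler} gives the unique Mahler expansion $g(x) = \sum_{n \geq 0} a_n(g) \binom{x}{n}$ (convergent in $\sC^0$, hence pointwise). Setting $c_n(g) := a_n(g)/p^{\lfloor r\, l(n) \rfloor}$, we may rewrite
\[
  g(x) = \sum_{n \geq 0} c_n(g) \cdot p^{\lfloor r\, l(n) \rfloor} \binom{x}{n},
\]
and the $c_n(g)$ are uniquely determined by $g$ since the Mahler coefficients are. By Theorem \ref{theorem comparing two val C^r}, $v_p(a_n(g)) - r\, l(n) \to \infty$; since $\lfloor r\, l(n) \rfloor \leq r\, l(n)$, we conclude $v_p(c_n(g)) = v_p(a_n(g)) - \lfloor r\, l(n) \rfloor \to \infty$, so $(c_n(g)) \in l_\infty^0(\mathbb{N}_{\geq 0}, E)$.

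Next I would compare valuations. Because $r\, l(n) - \lfloor r\, l(n) \rfloor \in [0,1)$, we have
\[
  v_{\sC^r}(g) \;\leq\; \inf_{n \geq 0} v_p(c_n(g)) \;\leq\; v_{\sC^r}(g) + 1,
\]
so the natural norm on null sequences pulls back, via the map $(c_n) \mapsto \sum c_n p^{\lfloor r\, l(n) \rfloor} \binom{x}{n}$, to a valuation equivalent to $v_{\sC^r}$. By the second half of Theorem \ref{theorem comparing two val C^r}, $v_{\sC^r}$ differs from $v'_{\sC^r}$ by at most a finite constant, so the inequality above also holds (with a possibly larger constant $C$) for $v'_{\sC^r}$ in place of $v_{\sC^r}$. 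This is exactly condition (ii)$'$ in the definition of a Banach basis.

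Finally I would assemble these pieces: the map
\[
  l_\infty^0(\mathbb{N}_{\geq 0}, E) \longrightarrow \sC^r(\zp, E), \qquad (c_n) \longmapsto \sum_{n \geq 0} c_n \cdot p^{\lfloor r\, l(n) \rfloor} \binom{x}{n}
\]
is well-defined (the partial sums converge in $\sC^r$ because the valuation comparison above controls the tail), linear, bijective (existence of the expansion follows from Theorem \ref{theorem comparing two val C^r}, uniqueness from Theorem \ref{mahler}), and bi-continuous with distortion at most the finite constant $C$. This is precisely the statement that $\{p^{\lfloor r\, l(n) \rfloor} \binom{x}{n}\}_{n \geq 0}$ is a Banach basis of $\sC^r(\zp, E)$. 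The only subtle point—really the only thing doing work—is the equivalence of $v_{\sC^r}$ and $v'_{\sC^r}$ from Theorem \ref{theorem comparing two val C^r}; everything else is bookkeeping with the floor function.
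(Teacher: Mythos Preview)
Your proof is correct and is exactly the argument the paper leaves implicit: the corollary is stated without proof, as an immediate consequence of Theorem~\ref{theorem comparing two val C^r}, and you have simply spelled out the renormalization of the Mahler basis and the bookkeeping with the floor function that makes this precise.
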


\subsubsection{Poly$\cdot$log functions are $\sC^r$}

We now show that certain (finite sums of) polynomial times logarithmic  (poly$\cdot$log) functions are $\sC^r$. 
These functions are of key importance in the entire argument.

Let $\sL \in E$. We define the corresponding branch of the logarithmic function
\begin{eqnarray*}
  \logL : \Qp^* & \longrightarrow & E \\
               \zeta & \mapsto & 0 \quad \text{for } \zeta \in \mu_{p-1}\\
               1+px & \mapsto & \log(1+px) = px - \frac{(px)^2}{2!} + \frac{(px)^3}{3!} - \cdots \quad \text{for } x \in \zp \\
                 p     & \mapsto & \sL
\end{eqnarray*}
and such that $\logL(xy) = \logL(x) + \logL(y)$ for all $x,y \in \qp^*$.
Most number theorists leave this planet encountering only the Iwasawa $p$-adic logarithmic function 
which is the case $\sL = 0$. But the case of general $\sL \in E$ will be key to all that follows.

We make several elementary but enlightening remarks about the continuity and differentiability of this
function and it's extensions to $\qp$.
First note that $\logL$ is continuous and even infinitely differentiable on $\qp^*$. Indeed, on the open 
set $\{z \in \qp^*  \mid v_p(z) = i \}$ for $i \in {\mathbb Z}$, writing $z = p^i \cdot \frac{z}{p^i}$ we see that 
$\logL(z) = i\sL+ \log(\frac{z}{p^i})$ and $\log$ is locally analytic on $\Zp^*$.



    However,  $\logL$ does not extend to a continuous function on $\Qp$. Indeed, the limits of 
    two different sequences tending to $0$ can be different. For instance, $\logL(p^{1 + p^n}) = (1+p^n) \sL$ and $\logL(p^{p^n}) = p^n \sL$ tend to $\cL$ and $0$, respectively, which may be distinct.

    However, the function $z^n\logL(z)$ extends to a continuous function on $\Qp$ for any integer $n \geq 1$. Indeed, for any $z \in \Qp^*$, the valuation of $\logL(z)$ is bounded below by $\min\{v_p(\cL), 1\}$. Therefore the limit of $z^n\logL(z)$ as $z \to 0$ is equal to $0$. So $z^n\logL(z)$ can be extended to a continuous function on $\Qp$ if we set its value at $0$ to be $0$.

    We now discuss the differentiability of $z^n\logL(z)$ for $n \geq 1 $. This function is clearly differentiable on $\qp^*$.
    Indeed for $n = 1$, $z\logL(z)$ has derivative $\logL(z) + 1$ on $\qp^*$. Similarly the function $z^2\logL(z)$ is        
    differentiable on $\Qp^*$ with first derivative given by $2z\logL(z) + z$ and second derivative
    given by $2 \logL(z) + 3$. In fact in general  it is not hard to show that $g(z) = z^n\logL(z)$ 
    is differentiable on $\qp^*$ with $j$-th derivative for $0 \leq j \leq n$ given by
    \begin{eqnarray}
      \label{der formula}
      g^{(j)}(z) = \frac{n!}{(n-j)!} z^{n-j} \logL(z) + t_j z^{n-j}
    \end{eqnarray}
    for some $t_j \in \zp$. In fact, there is a more precise formula
    for the derivatives of the function $z^n\logL(z)$  in terms of certain partial harmonic sums
    $H_i = 1 + \frac{1}{2} + \cdots + \frac{1}{i}$. Indeed, for $z \neq 0$, we have
    \begin{eqnarray}\label{Small derivative formula for polynomial times logs}
        \frac{d^j}{dz^j}(z^n\logL(z)) = \frac{n!}{(n - j)!}\left[z^{n - j}\logL(z) + (H_n - H_{n - j})z^{n - j}\right], 
    \end{eqnarray}
    for $0 \leq j \leq n$. We remind the reader of the convention that $H_0 = 0$.
    
   We now check the differentiability of $z^n\logL(z)$ for $n \geq 1 $ at $z = 0$. By definition, the derivative at $z = 0$
   is given by the limit
    \[
        \lim_{z \to 0}\frac{z^n\logL(z) - 0}{z}.
    \]
    We have seen above that this limit does not exist for $n = 1$. Therefore $z\logL(z)$ is only differentiable for $z \neq 0$. For $n = 2$ however, one sees that the limit exists. Therefore, the function $z^2\logL(z)$ is differentiable everywhere on $\qp$ with derivative given by $2z\logL(z) + z$. Since the derivative involves $z\logL(z)$, we see that $z^2\logL(z)$ is differentiable everywhere only once. Extending
    this, we see that the function $z^n\logL(z)$ is differentiable everywhere only $n - 1$ times. Moreover, its $(n - 1)^{\mathrm{th}}$ derivative is continuous on $\Qp^*$.


As has been pointed out in Remark~\ref{diff but not C^r}, while $\sC^r$ functions are differentiable of 
order $\lfloor r \rfloor$, the converse is not necessarily true. However, we prove that the converse is indeed true for 
poly$\cdot$log functions. We have

\begin{Proposition}
  \label{prop poly log}
For $1 \leq n \leq p-1$, the function $z^{n}\logL(z)$ belongs to $\sC^s(\Zp, E)$
    for any $0 \leq s < n$. 
\end{Proposition}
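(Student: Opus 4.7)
The plan is to verify the definition of $\sC^s$ directly, with
$g^{(j)}$ (for $0 \leq j \leq \lfloor s \rfloor \leq n-1$) the genuine derivatives from
formula \eqref{Small derivative formula for polynomial times logs}, extended by $g^{(j)}(0) = 0$.
I will show that $C_{g,s}(h) = \inf_{x \in \zp,\, y \in p^h \zp}(v_p(\varepsilon_{g,s}(x,y)) - sh) \to \infty$
by splitting the infimum into three cases according to the relative sizes of $v_p(x)$ and $v_p(y)$.
Two preliminary observations control the error terms. Writing $z = p^m \zeta (1+pw) \in \zp\setminus\{0\}$
with $\zeta \in \mu_{p-1}$ gives $\logL(z) = m\sL + \log(1+pw)$, yielding the uniform bound
$v_p(\logL(z)) \geq C_1 := \min(v_p(\sL),1)$. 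Since $n \leq p-1$, the factors $n!/(n-j)!$, $j!$ (for $j \leq n$),
and $H_n - H_{n-j}$ are all $p$-integral, so the combinatorial coefficients in
\eqref{Small derivative formula for polynomial times logs} contribute nothing to valuation estimates.

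\emph{Case 1} ($x = 0$): Formula \eqref{Small derivative formula for polynomial times logs} forces
$g^{(j)}(0) = 0$ for $j \leq n-1$, since $v_p(z^{n-j}\logL(z)) \geq (n-j)v_p(z) + C_1 \to \infty$ when $n-j \geq 1$.
Hence $\varepsilon_{g,s}(0,y) = y^n\logL(y)$, with $v_p \geq n v_p(y) + C_1 \geq nh + C_1$, giving
$v_p(\varepsilon_{g,s}(0,y)) - sh \geq (n-s)h + C_1 \to \infty$.
\emph{Case 2} ($x \neq 0$, $k := v_p(y) \leq m := v_p(x)$): Both $v_p(g(x+y))$ and each
$v_p(g^{(j)}(x)\, y^j/j!) \geq (n-j)m + jk + C_1$ (for $0 \leq j \leq \lfloor s \rfloor$) are at least
$nk + C_1 \geq nh + C_1$, using $m \geq k$ and $n - j \geq 1$. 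Hence
$v_p(\varepsilon_{g,s}(x,y)) - sh \geq (n-s)h + C_1 \to \infty$.

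\emph{Case 3} ($x \neq 0$, $v_p(y) > v_p(x)$) is the delicate one. Since $g$ is $E$-analytic on
$x + p^{m+1}\zp \ni x + y$ (Proposition~\ref{LA vs Cr}), Taylor's theorem gives
$\varepsilon_{g,s}(x,y) = \sum_{j > \lfloor s \rfloor} g^{(j)}(x) y^j/j!$, and I bound term by term.
For $\lfloor s \rfloor < j \leq n$, formula \eqref{Small derivative formula for polynomial times logs}
yields $v_p(g^{(j)}(x)\, y^j/j!) \geq (n-j)m + jk + C_1 \geq jk + C_1 \geq (\lfloor s \rfloor + 1)h + C_1$
(the minimum attained at $m = 0$). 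For $j > n$, further differentiation gives
$g^{(j)}(x) = n!(-1)^{j-n-1}(j-n-1)!/x^{j-n}$, hence
\[
v_p\bigl(g^{(j)}(x)\, y^j/j!\bigr) = jk - (j-n)m - v_p\!\left(\prod_{i=j-n}^{j} i\right) \;\geq\; nk + (j-n) - \log_p(j),
\]
using that the minimum over $0 \leq m < k$ of $jk - (j-n)m$ is attained at $m = k-1$, and that in any block
of $n+1 \leq p$ consecutive integers at most one is divisible by $p$, contributing valuation at most $\log_p j$.
Since $(j-n) - \log_p(j) \geq 0$ for $j > n$ when $n+1 \leq p$, this tail is bounded below by $nh$. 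Combining
the two ranges, $v_p(\varepsilon_{g,s}(x,y)) - sh \geq (\lfloor s \rfloor + 1 - s)h + C_1$,
which tends to $\infty$ since $\lfloor s \rfloor + 1 - s > 0$.

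The main obstacle is Case 3 with $j > n$: one must ensure that the growing denominator
$v_p(j!/(j-n-1)!)$ cannot outrun the gain from $y^j$. The hypothesis $n \leq p-1$ is decisive here,
forcing at most one multiple of $p$ per block of $n+1$ consecutive integers and hence bounding the
denominator valuation by $\log_p j$. With all three cases yielding $v_p(\varepsilon_{g,s}(x,y)) - sh \to \infty$
uniformly in $(x,y)$, we conclude $z^n \logL(z) \in \sC^s(\zp, E)$ for $0 \leq s < n$.
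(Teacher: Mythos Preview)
Your proof is correct and follows essentially the same approach as the paper's: direct verification of the $\sC^s$ definition via a case split on the relative $p$-adic sizes of $x$ and $y$ (the paper splits on $h$ versus $v_p(x)$ rather than on $v_p(y)$ versus $v_p(x)$, and bounds the $j>n$ tail via $v_p(j!)\le j/(p-1)$ rather than your block-of-$n+1$-integers argument, but these differences are cosmetic). One small slip: for $1\le j\le n$ your lower bound $v_p(g^{(j)}(x)y^j/j!)\ge (n-j)m+jk+C_1$ should carry $\min(C_1,0)$ instead of $C_1$, since the harmonic-sum term in \eqref{Small derivative formula for polynomial times logs} contributes no $C_1$; this does not affect the conclusion.
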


\begin{proof}
    The argument is taken from \cite{CG23}. Let $g(x) = x^{n}\logL(x)$.
    We need to check that for
    \[
        \varepsilon_{g,s}(x,y) = g(x + y) - \sum_{j = 0}^{\lfloor s \rfloor}g^{(j)}(x)\frac{y^j}{j!},
    \]
    we have
    \[
            \inf_{x \in \Zp \atop y \in p^h\Zp} (v_p(\varepsilon_{g,s}(x,y)) - sh) \to \infty.
    \]
    as $h \to \infty$.  
    Fix $h$. Let $x \in \Zp$. There are two cases to consider:
    \begin{itemize}
    \item First assume that $h > v_p(x)$. 
    Therefore $x \neq 0$ and $h \geq 1$. For $y \in p^h\Zp$, we have a
    Taylor expansion for $\logL(1 + y/x)$ which is analytic. So for such $x$ and $y$
    \begin{eqnarray*}
        \varepsilon_{g,s}(x,y) & = & \sum_{j = \lfloor s \rfloor + 1}^{\infty} g^{(j)}(x)\frac{y^j}{j!}.
    \end{eqnarray*}
    Say $j \leq n$. Then by \eqref{der formula}, we have
    $g^{(j)}(x) = \frac{n!}{(n - j)!}x^{n - j}\logL(x) + t_jx^{n - j}$ for some $t_j \in \Zp$. We have seen  that the valuation of $\logL(x)$ is bounded below by $\min\{v_p(\cL), 1\} \geq \min\{v_p(\cL), 0\}$. Therefore the valuation of the $j^{\mathrm{th}}$ summand above is bounded below by $\min\{v_p(\cL), 0\} + jh - v_p(j!)$. Using the well-known formula
    \[
        v_p(j!) = \frac{j - \sigma_p(j)}{p - 1} \leq \frac{j}{p-1},
    \]
    where $\sigma_p(j)$ is the sum of the $p$-adic digits of $j$, we see that
    \[
        v_p\left(g^{(j)}(x)\frac{y^j}{j!}\right) 
        \geq \min\{v_p(\cL), 0\} + j\left(h - \frac{1}{p - 1}\right)   \geq \min\{v_p(\cL), 0\} + (\lfloor s \rfloor + 1) \left(h - \frac{1}{p - 1}\right).
    \]
    Next say $j > n$. Then since $x \neq 0$, we have
    \[
        g^{(j)}(x) = n!\frac{(-1)^{j - n - 1}(j - n - 1)!}{x^{j - n}}.
    \]
    So similarly
    \begin{eqnarray*}
        v_p\left(g^{(j)}(x)\frac{y^j}{j!}\right) & \geq & -(j - n)v_p(x) + jh - \frac{j}{p - 1}  \\
        & = & n\left(h - \frac{1}{p - 1}\right) + (j - n)\left(h - \frac{1}{p - 1} - v_p(x)\right). \\
        & \geq & n\left(h - \frac{1}{p - 1}\right) \> \geq  \> (\lfloor s \rfloor + 1)\left(h - \frac{1}{p - 1}\right)
    \end{eqnarray*}
    since $h - v_p(x) \geq 1$.
    Putting these cases together, we see that the valuation of the $j^{\mathrm{th}}$ term in
    $\varepsilon_{g,s}(x,y)$ is greater than or equal to
    \[
        \min\{v_p(\cL), 0\} + (\lfloor s \rfloor + 1)\left(h - \frac{1}{p - 1}\right).
    \]
    Hence 
     \[
        v_p(\varepsilon_{g,s}(x,y)) \geq \min\{v_p(\cL), 0\} + (\lfloor s \rfloor + 1)\left(h - \frac{1}{p - 1}\right).
    \]
    \item Now assume that $h \leq v_p(x)$. Say  $y \in p^h \zp$. Then $v_p(x+y) \geq h$.
    Now $g(x+y) = (x+y)^n \logL(x+y)$ implies $$v_p(g(x+y)) \geq nh + \min\{v_p(\cL), 1\} \geq nh + \min\{v_p(\cL), 0\}.$$
    Now using \eqref{der formula}
    we have for all $0 \leq j \leq \lfloor s \rfloor$ 
    $$v_p(\frac{g^{(j)}(x) y^j}{j!}) \geq (n-j)h + \min\{v_p(\cL), 0\} + jh -v_p(j!) 
              \geq nh + \min\{v_p(\cL), 0\} - v_p(\lfloor s \rfloor !).$$
    Since $n \geq \lfloor s \rfloor + 1$, it follows that
    \[
        v_p(\varepsilon_{g,s}(x,y)) \geq \min\{v_p(\cL), 0\} + (\lfloor s \rfloor + 1) h - v_p(\lfloor s \rfloor!).
    \]
\end{itemize}
    Both of these estimates then show that
    \[
        \inf_{x \in \Zp, y \in p^h\Zp} (v_p(\varepsilon_{g,s}(x,y)) - sh) \to \infty
    \]
    as $h \to \infty$, since $\lfloor s \rfloor + 1 - s > 0$.
    Therefore, the function $z^n\logL(z)$ belongs to $\sC^s(\Zp, E)$
    for $0 \leq s < n$.
\end{proof}

\begin{Corollary}
    \label{cor poly log}
    $E$-valued functions of $z \in \qp$
    of the form
     \[
       g(z) = 
                 (z - z_i)^{n_i}\logL(z - z_i),
     \]
    where $z_i \in \zp$ and $n_i \geq 1$ are $\sC^s$
    for $0 \leq s < n_i$. 
\end{Corollary}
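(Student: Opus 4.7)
The plan is to reduce Corollary~\ref{cor poly log} directly to Proposition~\ref{prop poly log} by a translation argument. Set $f(z) = z^{n_i} \log_{\cL}(z)$ and define the translation $\tau : \Zp \to \Zp$ by $\tau(z) = z - z_i$. Since $z_i \in \Zp$, the map $\tau$ indeed lands in $\Zp$, and as a polynomial of degree $1$ it is of class $\sC^s$ for any $s \geq 0$ (its higher derivatives vanish and its $\varepsilon_{\tau,s}$ is identically zero for $s \geq 1$; for $s = 0$ it is uniformly continuous on $\Zp$). Then $g = f \circ \tau$, and the strategy is to invoke the composition result Proposition~\ref{prop comp}.

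Concretely, first I would apply Proposition~\ref{prop poly log} to obtain $f \in \sC^s(\Zp, E)$ for $0 \leq s < n_i$ (restricting attention to $1 \leq n_i \leq p-1$ as in the proposition's hypothesis). Second, I would verify that $\tau$ satisfies the hypotheses of Proposition~\ref{prop comp}: it is a function $\Zp \to \Zp$ of class $\sC^s$. Third, I would apply Proposition~\ref{prop comp} with $g_1 = \tau$ and $g_2 = f$ to conclude that $g = f \circ \tau \in \sC^s(\Zp, E)$ for $0 \leq s < n_i$, as required.

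The main obstacle, such as it is, is essentially bookkeeping: one must check that the condition $z_i \in \Zp$ (built into the hypothesis) guarantees $\tau(\Zp) \subseteq \Zp$, without which Proposition~\ref{prop comp} cannot be invoked. If one wanted to give a more self-contained proof not relying on Proposition~\ref{prop comp}, one would instead rerun the estimates in the proof of Proposition~\ref{prop poly log} with $x$ replaced by $x - z_i$ throughout; the key observation is that the two cases there, namely $h > v_p(x)$ and $h \leq v_p(x)$, get replaced by $h > v_p(x - z_i)$ and $h \leq v_p(x - z_i)$, and since $z_i \in \Zp$ the same bounds on $v_p(\varepsilon_{g,s}(x,y))$ go through unchanged.

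If the corollary is intended to cover a finite sum $g(z) = \sum_{i \in I} \lambda_i (z - z_i)^{n_i} \log_{\cL}(z - z_i)$ as suggested by the display formatting and by its intended use in \cite{CG23}, then the same plan applies termwise, since $\sC^s(\Zp, E)$ is an $E$-vector space: each summand is $\sC^{s}$ for $0 \leq s < \min_i n_i$, and finite $E$-linear combinations preserve this class.
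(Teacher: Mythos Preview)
Your proposal is correct and follows essentially the same approach as the paper: apply Proposition~\ref{prop poly log} to $z^{n_i}\logL(z)$ and then Proposition~\ref{prop comp} with the translation $\tau(z)=z-z_i$ to conclude. The paper's proof is just those two sentences, without the extra bookkeeping you supply.
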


\begin{proof}
  By Proposition~\ref{prop comp}, the composition of $\sC^s$ functions is $\sC^s$. Therefore 
  we are done by Proposition~\ref{prop poly log}. 
\end{proof}

\section{${\mathrm {GL}}_2(\qp)$-Banach spaces} 

\subsection{The Banach space $B_{k,\sL}$} 
\label{Definition of the Banach space}
In \cite[Section 4.2]{Bre04}, Breuil defined the Local Langlands correspondent $B(k, \cL)$ of the semi-stable representation $V_{k, \cL}$ for $k \geq 3$. Let $r = k-2$. In this section, we recall the alternative definition $\tB(k, \cL)$ of $B(k, \cL)$ 
given in \cite[Corollary 3.3.4]{Bre10} which uses the notion of functions of type $\sC^s$ for $s = \frac{r}{2}$.

By Proposition~\ref{theorem comparing two val C^r}, for a real number $s \geq 0$, a continuous function $g : \Zp \to E$ belongs to the space $\C{s}(\Zp, E)$ if in its Mahler's expansion $g(z) = \sum_{n = 0}^{\infty}a_n(g){z \choose n}$, the coefficients $a_n(g)$ satisfy $n^s\norm{a_n(g)} \to 0$ as $n \to \infty$. The space $\C{s}(\Zp, E)$ is a Banach space with the norm $\Bnorm{g}{s} = \sup_n (n + 1)^s\norm{a_n(g)}$.

    Let $D(k)$ be the $E$-vector space of functions $g : \Qp \to E$ such that 
    \begin{itemize}
    \item $g_1 : z \mapsto g(z)$ for $z \in \Zp$ belongs to $\C{\frac{r}{2}}(\Zp, E)$, and 
    \item $g_2 : z \mapsto z^{r}g(1/z)$ for $z \in {\Zp \setminus \{0\}}$ extends to a function on $\Zp$ belonging 
             to $\C{\frac{r}{2}}(\Zp, E)$.
    \end{itemize}
This space is a Banach space
under the norm to be $$\Bnorm{g}{} = \max(\Bnorm{g_1}{\frac{r}{2}}, \Bnorm{g_2}{\frac{r}{2}}).$$ We define an 
action of $G = {\mathrm {GL}}_2(\qp)$ on $D(k)$  by:
\begin{eqnarray}
    \label{G-action}
  \left(\begin{pmatrix}a & b \\ c & d  \end{pmatrix} \cdot g\right)(z) = \norm{ad - bc}^{\frac{r}{2}}(bz + d)^{r}g\left(\frac{az + c}{bz + d}\right).
\end{eqnarray}

Let $P(k)$ be the space of polynomials of degree less than or equal to $r = k - 2$. Note that $P(k) \subset D(k)$. Indeed, if $g$ is a polynomial of degree less than or equal to $r$, then both $g_1$ and $g_2$ are polynomials and therefore belong to $\C{\frac{r}{2}}(\Zp, E)$.
Moreover, the space of such polynomials is clearly stable under the $G$-action defined above.

    Set $\tB(k)$ to be the quotient of $D(k)$ by $P(k)$.

Now we define $\tB(k, \cL)$ as a quotient of $\tB(k)$.
Let $L(k, \cL)$ be the subspace of $D(k)$ generated by $P(k)$ and finite
sums of poly$\cdot$log functions of the form 
\begin{eqnarray}
    \label{polylog fn}
    g(z) = \sum_{i \in I} \lambda_i(z - z_i)^{n_i} \logL(z - z_i),
\end{eqnarray}
where 
\begin{itemize}
  \item $I$ is a finite (indexing) set 
  \item $\lambda_i \in E$
  \item $z_i \in \Qp$ 
  \item $n_i \in \{\lfloor\frac{r}{2}\rfloor + 1, \ldots, r\}$ and 
\end{itemize}  
\begin{eqnarray}
  \label{degree cond}
  \deg \left( \sum_{i \in I} \lambda_i (z - z_i)^{n_i} \right) < \frac{r}{2}.
\end{eqnarray}

The subspace $L(k,\sL)$ is $G$-stable (see \cite[Lemma 3.3.2]{Bre10}).
We make some comments on why $L(k,\sL) \subset D(k)$.
Assume that the $z_i \in \Zp$.
We already saw in Corollary~\ref{cor poly log} that the function $g(z)$ in \eqref{polylog fn} -  being a (a finite sum of scalar multiples of) poly$\cdot$log function(s) - is 
$\sC^{\frac{r}{2}}$ because each $n_i > r/2$. What is less clear is that $z^r g(1/z)$ is also $\sC^{\frac{r}{2}}$.
The somewhat mysterious degree condition \eqref{degree cond} is made precisely to ensure this. To see, let us assume that all the
$n_i$ are equal to some $n$ ($> \frac{r}{2}$) for simplicity. Then 
\begin{eqnarray*}
  z^r g(1/z) & = & \sum_{i \in I} \lambda_i z^{r-n} (1-zz_i)^n \logL(1-zz_i) - \sum_{i \in I} \lambda_i z^{r-n} (1-zz_i)^n \logL(z).
\end{eqnarray*}
Now the first term on the RHS is $\sC^{\frac{r}{2}}$ by Corollary~\ref{cor poly log} and Proposition~\ref{prop comp}
($z \mapsto 1-zz_i$ is $\sC^{\frac{r}{2}}$) and multiplication by the monomial $z^{r-n}$ preserves the property
of being $\sC^{\frac{r}{2}}$. Write the second term as
$$\sum_{k=0}^n {n \choose k} (-1)^k \left( \sum_{i \in I} \lambda_i z_i^k \right) z^{r-n+k} \logL(z).$$
Now an easy check using \eqref{degree cond} shows that $\sum_{i \in I} \lambda_i z_i^k = 0$ for $k \leq n - r/2$.
This forces the only powers $r-n+k$ of $z$ to survive are those which are strictly bigger than $r/2$. So again
Corollary~\ref{cor poly log} applies to show that the second term on the RHS is $\sC^{\frac{r}{2}}$!

  Define  $\tB(k, \cL)$ to be the quotient of $D(k)$ by the closure of $L(k,\cL)$ in $D(k)$.

It turns out that, $\tB(k, \cL)$ is isomorphic to $B(k, \cL)$, the Local Langlands correspondent of $V_{k, \cL}$
(cf. \cite[Corollary 3.3.4]{Bre10}). 

For convenience, we introduce a third notation and set 
$$B_{k,\sL} := \tB(k, \cL).$$
Thus $B_{k,\sL}$ is a unitary $G$-Banach space which corresponds to $V_{k,\sL}$ under the $p$-adic
Local Langlands correspondence. 

\subsection{Uniformizing $B_{k,\sL}$}

Recall that 
$$Z = \qp^* \subset B = \{ \left( \begin{smallmatrix} a & b \\ 0 & d \end{smallmatrix} \right) \} 
   \subset G = {\mathrm{GL}}_2(\qp) \supset K =  {\mathrm{GL}}_2(\Zp)  \supset I = 
\{ \left( \begin{smallmatrix} a & b \\ c & d \end{smallmatrix} \right) \mid c \equiv 0 \mod p \}$$ where 
$I$ is the Iwahori subgroup.

To compute the reduction of $B_{k,\sL}$ we need to define an integral structure on $B_{k,\sL}$. 
It is not clear how to proceed. However, if one is able to uniformize (a dense subspace of) $B_{k,\sL}$  by a 
compactly induced space of rational valued polynomial functions on $G$,
then one can define a lattice in $B_{k,\sL}$ by taking the (closure of the) image of the integral valued
polynomial functions on $G$ (much as the standard lattice is defined  in the corresponding
Banach space attached to 
a crystalline representation). 

Recall that $\SymE{k-2}$ is the $(k-2)$-th symmetric power representation of $KZ$ on $E^2$
(which is modelled on homogeneous polynomials of degree $k-2$ in two variables $X$ and $Y$ over $E$)
twisted by the character $|\det|^{\frac{k-2}{2}}$ so that $p \in Z$ acts trivially.

We prove (see \cite[Section 5]{CG23} for the details):

\begin{Proposition}
\label{uniformizing}
There are maps:
\begin{eqnarray*}
  \mathrm{ind}_{IZ}^{G} \SymE{k-2} \twoheadrightarrow \SymE{k-2} \otimes  \mathrm{ind}_{IZ}^G {\mathbbm 1}_E
        \rightarrow \SymE{k-2}  \otimes  (\Bind E)^{\sm}     
        \hookrightarrow D(k). 
\end{eqnarray*}
Under the composition of all these maps $\llbracket 1,X^iY^{r-i} \rrbracket \mapsto z^i {\mathbbm 1}_{p\zp}$.
\end{Proposition}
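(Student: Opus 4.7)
My plan is to construct the three maps separately and then chase the basis element $\llbracket 1, X^iY^{r-i}\rrbracket$ through the composition.

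The first map I would define by $\llbracket g, v\rrbracket \mapsto gv \otimes \llbracket g, 1\rrbracket$. This makes sense because $\SymE{k-2}$ extends naturally from an $IZ$-representation to the $G$-representation on homogeneous polynomials (twisted by $|\det|^{(k-2)/2}$), so the expression $gv$ is defined. Well-definedness is the relation $\llbracket gh, v\rrbracket = \llbracket g, hv\rrbracket$ for $h \in IZ$ combined with $\llbracket gh, 1\rrbracket = \llbracket g, 1\rrbracket$ (since the trivial character is being used on the right). One checks $G$-equivariance directly, and an explicit inverse $v \otimes \llbracket g, 1\rrbracket \mapsto \llbracket g, g^{-1}v\rrbracket$ shows this is actually an isomorphism, so in particular surjective. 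This is a standard projection-formula identity.

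For the second map I would produce it by Frobenius reciprocity, using the distinguished vector $f_{\id} := \mathbbm{1}_{BIZ} \in (\Bind E)^{\sm}$. Since $BIZ$ is a union of right $B$-cosets and is open in $G$, $f_{\id}$ is well-defined and smooth; because $BIZ \cdot IZ = BIZ$, the vector is $IZ$-invariant. Frobenius reciprocity then yields the $G$-equivariant map $\mathrm{ind}_{IZ}^G \mathbbm{1}_E \to (\Bind E)^{\sm}$ sending $\llbracket 1, 1\rrbracket$ to $f_{\id}$ (so $\llbracket g, 1\rrbracket \mapsto g\cdot f_{\id}$). Tensoring with $\SymE{k-2}$ gives the required map.

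For the third map I would send $P(X,Y) \otimes f$ to $P(z, 1) \cdot f\!\left(\left(\begin{smallmatrix}1 & 0 \\ z & 1\end{smallmatrix}\right)\right)$. The image is locally polynomial on $\zp$ (since $f$ is locally constant), hence in $\sC^{r/2}(\Zp, E)$. For the condition at $\infty$, the identity $\left(\begin{smallmatrix}0 & 1 \\ 1 & z\end{smallmatrix}\right) = \left(\begin{smallmatrix}-1/z & 1 \\ 0 & z\end{smallmatrix}\right)\left(\begin{smallmatrix}1 & 0 \\ 1/z & 1\end{smallmatrix}\right)$ (a short computation) and left $B$-invariance of $f$ rewrite $z^r g(1/z)$ as $P(1,z) \cdot f\!\left(\left(\begin{smallmatrix}0 & 1 \\ 1 & z\end{smallmatrix}\right)\right)$, again locally polynomial on $\zp$. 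Injectivity follows by arranging disjoint supports of finitely many $f_i$ and observing that a non-zero polynomial cannot vanish on a nonempty open of $\Qp$.

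Chasing $\llbracket 1, X^iY^{r-i}\rrbracket$ through the three maps gives in turn $X^iY^{r-i} \otimes \llbracket 1, 1\rrbracket$, then $X^iY^{r-i} \otimes f_{\id}$, then $z^i \cdot f_{\id}\!\left(\left(\begin{smallmatrix}1 & 0 \\ z & 1\end{smallmatrix}\right)\right)$. The only real computation, and what I expect to be the main obstacle, is to show $f_{\id}\!\left(\left(\begin{smallmatrix}1 & 0 \\ z & 1\end{smallmatrix}\right)\right) = \mathbbm{1}_{p\Zp}(z)$. The case $z \in p\Zp$ is immediate since the matrix already lies in $I \subset BIZ$. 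For $z \notin p\Zp$, I would suppose $\left(\begin{smallmatrix}\alpha & \beta \\ 0 & \delta\end{smallmatrix}\right)\left(\begin{smallmatrix}1 & 0 \\ z & 1\end{smallmatrix}\right) \in I$ and track valuations: the conditions that $\alpha\delta$ is a unit, $\delta z \in p\Zp$, $\beta \in \Zp$, and $\alpha + \beta z \in \Zp$ force, through a short case analysis on $v_p(\alpha)$, that $v_p(z) \geq 1$ — contradicting $z \notin p\Zp$. This valuation book-keeping is the single non-formal step in the proof.
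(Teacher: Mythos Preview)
Your proof is correct and follows essentially the same approach as the paper: the three maps are defined identically, and the chase of $\llbracket 1, X^iY^{r-i}\rrbracket$ proceeds the same way. You actually supply more detail than the paper does---the paper merely asserts ``one checks that $f_{\id}\left(\begin{smallmatrix}1 & 0 \\ z & 1\end{smallmatrix}\right) = \mathbbm{1}_{p\Zp}$'' and omits the verification that the third map lands in $D(k)$---so your valuation argument for the former and your use of the identity $\left(\begin{smallmatrix}0 & 1 \\ 1 & z\end{smallmatrix}\right) = \left(\begin{smallmatrix}-1/z & 1 \\ 0 & z\end{smallmatrix}\right)\left(\begin{smallmatrix}1 & 0 \\ 1/z & 1\end{smallmatrix}\right)$ for the latter are welcome additions.
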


\begin{proof}
We recall the definitions of each of the maps in order:
\begin{enumerate}
  \item $\llbracket g,P \rrbracket \mapsto gP \otimes \llbracket g,1 \rrbracket$.
  \item $P \otimes \llbracket \id, 1 \rrbracket \mapsto P \otimes f_{\id}$ where
             $f_{\id} \in (\Bind E)^{\sm}$ is defined by
    \[
        f_{\id}(g) = \begin{cases}
            1 & \text{ if $g \in BIZ$} \\
            0 & \text{ otherwise.}
        \end{cases}
    \]
   One checks that $f_{\id} \left( \begin{smallmatrix} 1 & 0 \\ z & 1 \end{smallmatrix} \right) = {\mathbbm 1}_{p\zp}$.
  \item $P(X,Y) \otimes f \mapsto P(z,1) f \left( \begin{smallmatrix} 1 & 0 \\ z & 1 \end{smallmatrix} \right)$.
\end{enumerate}
An easy check shows that under these maps
\begin{eqnarray*}
  \llbracket \id, X^iY^{r-i} \rrbracket \mapsto X^iY^{r-i} \otimes \llbracket \id,1 \rrbracket 
          \mapsto  X^iY^{r-i} \otimes f_{\id} \mapsto z^i {\mathbbm 1}_{p\zp}
\end{eqnarray*}         
as claimed. 
\end{proof}

We make two remarks
\begin{itemize}
  \item The map 2. above induces an isomorphism
           $$\dfrac{\mathrm{ind}_{IZ}^G {\mathbbm 1}_E}{(T_{1,0} +1) + (T_{1,2} - 1)} \simeq  \dfrac{(\Bind E)^{\sm}}{{\mathbbm 1}_E} = \mathrm{St}.$$ 
  \item The map 3. factors as 
           $$\dfrac{ \SymE{k-2} \otimes (\Bind E)^{\sm}}{\SymE{k-2} \otimes {\mathbbm 1}_E \qquad} = \SymE{k-2} \otimes \mathrm{St} \hookrightarrow \dfrac{D(k)}{P(k)} = \tilde{B}(k) \twoheadrightarrow B_{k,\sL}.$$
           The image of this map consists of  the {\it locally algebraic vectors} in $B_{k,\sL}$ of which it is the completion. 
\end{itemize}
The second remark and Proposition~\ref{uniformizing} together say we have a uniformization (the image is dense)
\begin{eqnarray}
 \label{Uniformizing}
 \mathrm{ind}_{IZ}^G \SymE{k-2} \rightarrow  B_{k,\sL}.
\end{eqnarray}
           
\subsection{The lattice 
  $\latticeL{k}$}
    \label{Section defining the lattices}

We can now define the standard lattice $\latticeL{k}$
in the Banach space $\tB(k,\cL)$ introduced above.
We let
$$\Theta_{k,\sL} := \tilde{\Theta}(k,\sL) = \text{(closure of the) image} \left( \mathrm{ind}_{IZ}^{G} \SymO{k-2} \rightarrow B_{k,\sL} \right)$$ 
under the map in \eqref{Uniformizing}.
%
%
We now describe
some explicit elements of this lattice.
In this context, we are reminded of a warning by Prof. Murray Schacher who while teaching an algebraic number theory course at UCLA in 1993 mentioned that  graduate students experience `considerable giddiness'
when trying to write down integral elements in a number field. A similar warning applies
when trying to identify explicit elements in $\latticeL{k}$.
We now mention two important lemmas which help identify some (integral) elements in this lattice. 
        
    
    \begin{lemma}\label{Integers in the lattice}
        Let $r = k-2 \geq 1$, $0 \leq j \leq r$, $h \in \bZ$ and $z_0 \in \Qp$. Then the function
        \[
            p^{(h - 1)\left(r/2 - j \right)}(z - z_0)^j\mathbbm{1}_{z_0 + p^h\Zp}\in \latticeL{k}
        \]
    is integral.
    \end{lemma}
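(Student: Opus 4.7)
The plan is to identify an explicit integral elementary function in $\IZind \SymO{k-2}$ whose image under the uniformization map \eqref{Uniformizing} is exactly the function in the lemma. Recall that by Proposition~\ref{uniformizing} this uniformization sends $\llbracket \id, X^i Y^{r-i}\rrbracket$ to $z^i \mathbbm{1}_{p\Zp}(z)$, and is $G$-equivariant, where the $G$-action on $D(k)$ is given by \eqref{G-action}.

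Concretely, I would take
\[
g = \begin{pmatrix} 1 & 0 \\ -z_0 & p^{h-1} \end{pmatrix} \in \GL_2(\qp),
\]
a well-defined element of $G$ with determinant $p^{h-1}$. Since $X^j Y^{r-j}$ is one of the monomial generators of the $IZ$-stable lattice $\mathrm{Sym}^{k-2}\Oe^2 \subset \SymE{k-2}$, the elementary function $\llbracket g, X^jY^{r-j}\rrbracket = g \cdot \llbracket \id, X^j Y^{r-j}\rrbracket$ lies in $\IZind \SymO{k-2}$, so its image in $B_{k,\cL}$ lies in $\latticeL{k}$ by the very definition of the standard lattice.

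It then remains to compute that image by applying \eqref{G-action} to $f(z) = z^j \mathbbm{1}_{p\Zp}(z)$ with $(a,b,c,d) = (1,0,-z_0,p^{h-1})$:
\[
(g \cdot f)(z) \;=\; |p^{h-1}|^{r/2}\,(p^{h-1})^r \, f\!\left(\tfrac{z-z_0}{p^{h-1}}\right) \;=\; p^{(h-1)(r/2 - j)}\, (z-z_0)^j\, \mathbbm{1}_{z_0 + p^h\Zp}(z),
\]
where I would use that $\tfrac{z-z_0}{p^{h-1}} \in p\Zp$ if and only if $z \in z_0 + p^h\Zp$, and that the three resulting powers of $p$ (namely $p^{-(h-1)r/2}$ from $|\det|^{r/2}$, $p^{(h-1)r}$ from $(bz+d)^r$, and $p^{-(h-1)j}$ from rescaling the argument of $f$) combine to $p^{(h-1)(r/2-j)}$. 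This exhibits the desired function as the image of an integral element.

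There is essentially no obstacle: the argument reduces to a correct choice of the matrix $g$ followed by bookkeeping for the three sources of powers of $p$. The only mild conceptual points worth noting are that the cases $h < 1$ and $z_0 \in \qp \setminus \Zp$ are handled uniformly, since $g \in \GL_2(\qp)$ irrespective of the sign of $h-1$ or the valuation of $z_0$; and that the factor $|p^{h-1}|^{r/2}$ lies in $E$ because $\sqrt{p} \in E$ by assumption.
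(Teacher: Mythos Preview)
Your proof is correct and follows essentially the same approach as the paper: both apply the matrix $\begin{pmatrix} 1 & 0 \\ -z_0 & p^{h-1} \end{pmatrix}$ to the integral element $z^j\mathbbm{1}_{p\Zp}$ (equivalently, to $\llbracket \id, X^jY^{r-j}\rrbracket$) and read off the result from the formula \eqref{G-action}. The paper's version is slightly more terse, invoking $G$-stability of $\latticeL{k}$ directly rather than tracing back to the elementary function, but the content is identical.
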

    
    \begin{proof}
        By the $G$-action defined in \eqref{G-action}
        \[
          \begin{pmatrix} 1 & 0 \\ -z_0 & p^{h - 1}\end{pmatrix} \cdot z^j\mathbbm{1}_{p\Zp} =
          p^{(h - 1)\left(r/2 - j \right)}(z - z_0)^j\mathbbm{1}_{z_0 + p^h\Zp}.
        \]
        The lemma follows since $z^j\mathbbm{1}_{p\Zp} \in \latticeL{k}$ and $\latticeL{k}$ is $G$-stable.
    \end{proof}
    

    The following stronger result applies if $j \geq r/2$. Note that $(h-1)(r/2-j) \geq h(r/2-j)$.
    
    \begin{lemma}\label{stronger bound for polynomials of large degree with varying radii}
        Let $r = k-2 \geq 1$, $r/2 \leq j \leq r$, $h \in \bZ$ and $z_0 \in \Qp$. Then the function
        \[
            p^{h\left(r/2 - j\right)}(z - z_0)^j\mathbbm{1}_{z_0 + p^h\Zp} \in \latticeL{k}
        \]
       is integral.
    \end{lemma}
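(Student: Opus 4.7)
The plan is to partition the ball $z_0 + p^h\Zp$ into its $p$ sub-balls of radius $p^{-h-1}$, expand $(z-z_0)^j$ around the new centres, and invoke the previous lemma on each piece. Concretely, write
\[
z_0 + p^h\Zp \;=\; \bigsqcup_{\lambda \in \Fp} \bigl(z_0 + [\lambda]p^h + p^{h+1}\Zp\bigr),
\]
so that
\[
\mathbbm{1}_{z_0+p^h\Zp} \;=\; \sum_{\lambda \in \Fp} \mathbbm{1}_{z_0 + [\lambda]p^h + p^{h+1}\Zp}.
\]
On each sub-ball, decompose the monomial using $z-z_0 = (z - z_0 - [\lambda]p^h) + [\lambda]p^h$:
\[
(z-z_0)^j \;=\; \sum_{i=0}^{j} \binom{j}{i}\,[\lambda]^{j-i}\,p^{h(j-i)}\,(z - z_0 - [\lambda]p^h)^i.
\]

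Combining these two expansions yields
\[
p^{h(r/2 - j)}(z-z_0)^j \mathbbm{1}_{z_0 + p^h\Zp}
\;=\; \sum_{\lambda \in \Fp} \sum_{i=0}^{j} \binom{j}{i}\,[\lambda]^{j-i}\,p^{h(r/2 - i)}\,(z - z_0 - [\lambda]p^h)^i\,\mathbbm{1}_{z_0 + [\lambda]p^h + p^{h+1}\Zp},
\]
where the $p$-powers combine cleanly: $p^{h(r/2-j)} \cdot p^{h(j-i)} = p^{h(r/2-i)}$. This cancellation is the whole point and is precisely why adding one to the exponent of the radius in Lemma~\ref{Integers in the lattice} buys us one unit in the $p$-power of the prefactor.

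Now apply Lemma~\ref{Integers in the lattice} to each term on the right with radius parameter $h+1$ and centre $z_0 + [\lambda]p^h \in \Qp$: it gives
\[
p^{((h+1)-1)(r/2 - i)}(z - z_0 - [\lambda]p^h)^i \mathbbm{1}_{z_0 + [\lambda]p^h + p^{h+1}\Zp} \;=\; p^{h(r/2 - i)}(z - z_0 - [\lambda]p^h)^i \mathbbm{1}_{z_0 + [\lambda]p^h + p^{h+1}\Zp} \;\in\; \latticeL{k},
\]
for every $0 \le i \le j$. The scalars $\binom{j}{i}\,[\lambda]^{j-i}$ lie in $\co_E$, so each summand belongs to $\latticeL{k}$, and hence so does the (finite) sum. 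There is no genuine obstacle here: the proof is really just organising how a coarse ball relates to its finer partition. One should verify only that the exponents match — that is, that $h(r/2-j)+h(j-i) = h(r/2-i)$ — and that the hypothesis $r/2 \le j \le r$ is only needed to make the conclusion stronger than what Lemma~\ref{Integers in the lattice} already provides (for $j<r/2$ the improvement is vacuous since $p^{h(r/2-j)}$ is then more divisible than $p^{(h-1)(r/2-j)}$).
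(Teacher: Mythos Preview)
Your proof is correct but takes a genuinely different route from the paper. The paper applies a single matrix from $G$: acting by $-\begin{pmatrix}0 & 1 \\ p^h & -z_0\end{pmatrix}$ on $z^{r-j}\mathbbm{1}_{p\Zp}$ produces $p^{h(r/2-j)}(z-z_0)^j\mathbbm{1}_{\Qp\setminus(z_0+p^h\Zp)}$, and then the key structural fact that polynomials of degree at most $r$ vanish in $\tB(k,\cL)$ converts this complement indicator to $-\mathbbm{1}_{z_0+p^h\Zp}$. In particular the hypothesis $r/2\le j\le r$ enters naturally because one needs $0\le r-j\le r$ for $z^{r-j}\mathbbm{1}_{p\Zp}$ to be in the lattice.

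Your argument, by contrast, is purely combinatorial: it refines the ball one step, expands binomially, and reduces to Lemma~\ref{Integers in the lattice} termwise. This avoids both the explicit $G$-action computation and the polynomial-killing property of the quotient, and as you observe it actually goes through for all $0\le j\le r$ (the restriction $j\ge r/2$ merely records when the conclusion improves on the previous lemma). The paper's proof is shorter and highlights the role of the quotient by $P(k)$, while yours is more self-contained and makes transparent exactly how one extra level of refinement buys one unit in the $p$-exponent.
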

    
    \begin{proof}
        Again, by \eqref{G-action},
        \[
          -\begin{pmatrix}0 & 1 \\ p^h & -z_0\end{pmatrix} \cdot z^{r - j}\mathbbm{1}_{p\Zp} =
          -p^{h\left(r/2 - j\right)}(z - z_0)^j\mathbbm{1}_{p\Zp}\left(\frac{p^h}{z - z_0}\right).
        \]
        Now
        \[
          \mathbbm{1}_{p\Zp}\left(\frac{p^h}{z - z_0}\right) = 1 \iff \frac{p^h}{z - z_0} \in p\Zp
          \iff v_p(z - z_0) < h \iff \mathbbm{1}_{\Qp \setminus \left(z_0 + p^h\Zp\right)}(z) = 1.
        \]
        Using the fact that polynomials of degree less than or equal to $r$ are equal to $0$ in $\tB(k,\sL)$, we see that
        \[
            -(z - z_0)^j\mathbbm{1}_{p\Zp}\left(\frac{p^h}{z - z_0}\right) = -(z - z_0)^j\left(1 - \mathbbm{1}_{z_0 + p^h\Zp}(z)\right) = (z - z_0)^j\mathbbm{1}_{z_0 + p^h\Zp}.
        \]
        The lemma again follows since $z^{r - j}\mathbbm{1}_{p\Zp} \in \latticeL{k}$ and $\latticeL{k}$ is $G$-stable.
      \end{proof}

\subsection{Uniformizing the reduced lattice and its subquotients}

 Let $\Fq = \co_E/\pi$ be the residue field of $E$. 
 Let $\SymF{k-2}$ be the symmetric power representation of $\mathrm{GL}_2(\Fq)$  modelled again on all homogeneous
 polynomials of degree $r = k-2$ over $\Fq$ in $X$ and $Y$ and
 thought of as a representation
 of $IZ$  by noting that $I \subset K \twoheadrightarrow 
 \mathrm{GL}_2(\Fq)$ and letting $p \in Z$ act trivially. 
 
 Recall that by definition there is a map
 $\IZind \SymO{k - 2} \to \latticeL{k}$ with dense image.
 Tensoring this map over $\co_E$ with $\Fq$ 
 gives us a surjection 
\begin{eqnarray}\label{Main surjection on mod p representations}
    \IZind \SymF{k - 2} \twoheadrightarrow \br{\latticeL{k}} = \br{\Theta_{k,\sL}}.
\end{eqnarray}

 Now $\SymF{k - 2}$ has an $IZ$-stable filtration 
 $$\langle X^r, X^{r-1}Y, \ldots, XY^{r-1}, Y^r \rangle \supset  \langle X^r, X^{r-1}Y, \ldots, XY^{r-1} \rangle \supset 
      \cdots \supset \langle X^r \rangle$$ 
      where the successive quotients are given by the characters  $d^r, ad^{r-1}, \ldots, a^r$ of $IZ$.
      Thus the module $\IZind \SymF{k - 2}$
      inherits a filtration where the successive subquotients are given by
      $$\IZind d^r, \IZind ad^{r-1}, \ldots, \IZind a^r.$$ 
      Pushing this filtration forward, we obtain a filtration on $\br{\latticeL{k}}$. Define
      $F_{2l,2l+1}$ to be the subquotient of  $\br{\latticeL{k}}$ corresponding to $ \IZind a^l d^{r-l}$ for $0 \leq l \leq r$. 
      Then, there is a surjection
      \[
        \IZind a^ld^{r-l} \twoheadrightarrow F_{2l,\> 2l + 1},
      \]
      for $0 \leq l \leq  r$.
   
     
As mentioned in the introduction, by the Iwahori mod $p$ LLC
      (Theorem~\ref{Iwahori mod p LLC}), to determine
      the reduction $\br{V}_{k,\cL}$, it suffices to determine the reduction
      $\br{\tilde{\Theta}(k, \cL)}$ of $\tilde{\Theta}(k, \cL)$. By what we have just said, it therefore suffices
      to determine the subquotients 
      $$F_{0, 1}, \> F_{2, 3}, \> \ldots, \> F_{2r,2r+1}.$$ 
     Theorem~\ref{Main theorem in the second part of my thesis}
    is proved by a detailed analysis of these subquotients!

\section{Reductions of semi-stable Galois representations}

\subsection{Some results concerning functions in $\sC^{r/2}(\Zp, E)$}
We now describe some further generalities on functions in the space $\sC^{s}(\Zp, E)$ that  are needed in our
computations of the reduction of the lattice $\br{\latticeL{k}}$. Much of the material in this section
is taken verbatim from \cite[Section 8]{CG23}.

Recall that Proposition~\ref{prop loc poly} says that if 
   $g \in \sC^{s}(\Zp, E)$  for some $s \geq 0$, then 
    \begin{eqnarray}
    \label{Convergence proposition}
        \tilde{g}_h(z) := \sum_{m = 0}^{p^h - 1} \left[\sum_{j = 0}^{\lfloor s \rfloor}\frac{g^{(j)}(m)}{j!}(z - m)^j\right]\mathbbm{1}_{m + p^h\Zp}  \to g \text{ as } h \to \infty. 
    \end{eqnarray}

We now modify $\tilde{g}_h$ slightly by fake-adding some terms which vanish mod $\pi \latticeL{k}$ to get a 
new function $g_h$ which is more amenable to further computation. More
precisely, we have the following general lemma.

    \begin{lemma}\label{Congruence lemma}
        Suppose $g \in \sC^{r/2}(\Zp, E)$ is a function that has continuous derivatives of order $\lfloor r/2 \rfloor + 1, \ldots, t$ for an integer $r/2 < t \leq r$. Define
        \[
            g_h(z) := \sum_{m = 0}^{p^h - 1}\left[\sum_{j = 0}^{t}\frac{g^{(j)}(m)}{j!}(z - m)^j\right]\mathbbm{1}_{m + p^h\Zp}.
        \]
        Then for large $h$, we have
        \[
            \tilde{g}_h(z) \equiv g_h(z) \mod \pi \latticeL{k}.
        \]
    \end{lemma}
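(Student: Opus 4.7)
The plan is to subtract the two functions and show the difference lies termwise in $\pi \latticeL{k}$ for $h$ large, using the integrality estimate of Lemma~\ref{stronger bound for polynomials of large degree with varying radii}. Writing out the definitions gives
\[
    g_h(z) - \tilde{g}_h(z) = \sum_{m=0}^{p^h-1} \sum_{j = \lfloor r/2 \rfloor + 1}^{t} \frac{g^{(j)}(m)}{j!} (z-m)^j \mathbbm{1}_{m + p^h \Zp}(z),
\]
so it suffices to show that each individual summand, for each $m$ and each $j$ in the range $\lfloor r/2 \rfloor + 1 \leq j \leq t \leq r$, lies in $\pi \latticeL{k}$ once $h$ is large enough (uniformly in $m$), since the indicator functions have disjoint supports so the total sum has valuation equal to the infimum of the summands.

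The key input is Lemma~\ref{stronger bound for polynomials of large degree with varying radii}, which applies precisely because $j$ is in the upper-half range $r/2 \leq j \leq r$: it tells us that $p^{h(r/2 - j)}(z-m)^j \mathbbm{1}_{m+p^h\Zp} \in \latticeL{k}$. I would then rewrite each summand as
\[
    \frac{g^{(j)}(m)}{j!} (z-m)^j \mathbbm{1}_{m+p^h\Zp}
    = \left(\frac{g^{(j)}(m)}{j!} \, p^{h(j - r/2)}\right) \cdot \Bigl(p^{h(r/2 - j)}(z-m)^j \mathbbm{1}_{m+p^h\Zp}\Bigr),
\]
displaying it as a scalar multiple of an element of $\latticeL{k}$. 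It then suffices to check that the scalar on the left has $v_p$-valuation at least $1$ uniformly in $m$ for $h$ sufficiently large.

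For the scalar estimate, observe that $g^{(j)}$ is continuous on the compact space $\Zp$ for each $j \leq t$ (by hypothesis), so there exists a constant $M_j$ with $v_p(g^{(j)}(m)) \geq M_j$ for all $m \in \Zp$. Since $j > r/2$ we have $j - r/2 > 0$ (in fact $\geq 1/2$), so
\[
    v_p\!\left(\frac{g^{(j)}(m)}{j!} \, p^{h(j-r/2)}\right) \geq M_j - v_p(j!) + h(j - r/2) \;\longrightarrow\; \infty
\]
as $h \to \infty$, uniformly in $m$. Taking the maximum over the finitely many values $j \in \{\lfloor r/2 \rfloor + 1, \ldots, t\}$ of the threshold $h_j$ beyond which this quantity exceeds $1$, we obtain a single $h_0$ such that the scalar lies in $\pi \co_E$ for every $h \geq h_0$ and every $m$, every $j$ in range. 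Hence the whole difference $g_h - \tilde{g}_h$ lies in $\pi \latticeL{k}$ for $h \geq h_0$, which is the claim.

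There is no real obstacle here beyond the careful bookkeeping: once one recognizes that the extra terms in $g_h$ over $\tilde{g}_h$ all carry the degree $j > r/2$ required to invoke Lemma~\ref{stronger bound for polynomials of large degree with varying radii} (rather than the weaker Lemma~\ref{Integers in the lattice}), the positive exponent $h(j - r/2)$ provides the shrinkage that forces each coefficient into $\pi \co_E$. The uniform lower bound on $v_p(g^{(j)}(m))$ is immediate from continuity of $g^{(j)}$ on the compact set $\Zp$.
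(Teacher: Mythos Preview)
Your proof is correct and follows essentially the same approach as the paper: show that each extra term $\frac{g^{(j)}(m)}{j!}(z-m)^j\mathbbm{1}_{m+p^h\Zp}$ for $\lfloor r/2\rfloor+1\le j\le t$ lies in $\pi\latticeL{k}$ for large $h$, using continuity of $g^{(j)}$ on the compact set $\Zp$ to bound the coefficient uniformly. The only cosmetic difference is that you invoke Lemma~\ref{stronger bound for polynomials of large degree with varying radii} (the bound $p^{h(r/2-j)}$) whereas the paper invokes the weaker Lemma~\ref{Integers in the lattice} (the bound $p^{(h-1)(r/2-j)}$); since $r/2-j<0$ and $h\to\infty$, either suffices.
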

    \begin{proof}
      The lemma follows by noting
      that for large $h$, $\lfloor r/2 \rfloor + 1 \leq j \leq t$ and $0 \leq m \leq p^h - 1$, we have
        \begin{eqnarray}\label{Excessive derivatives are 0}
            \frac{g^{(j)}(m)}{j!}(z - m)^j \mathbbm{1}_{m + p^h\Zp} \equiv 0 \mod \pi \latticeL{k}.
        \end{eqnarray}
        Indeed, by the continuity of the derivatives we see that the valuation of $\frac{g^{(j)}(m)}{j!}$ is bounded below by some finite rational number $M$ for all $m \in \Zp$ and all $\lfloor r/2 \rfloor + 1 \leq j \leq t$. We choose $h$ large enough so that $M > (h - 1)(r/2 - j)$, noting $r/2-j$ is negative.
        Then  \eqref{Excessive derivatives are 0} follows by Lemma \ref{Integers in the lattice}.
    \end{proof}

\begin{lemma}\label{Truncated Taylor expansion}
    Let $k/2 \leq n \leq r \leq p - 1$ and $z_0 \in \Zp$. Fix $x \in \bQ$ such that $x \geq - 1$ and $x + v_p(\cL) \geq r/2 - n$. 
    Let 
        $$g(z) = p^x(z - z_0)^n\logL(z - z_0)\mathbbm{1}_{\Zp}$$
    Then for $h \geq 3$, $0 \leq a \leq p^{h - 1} - 1$, $0 \leq \alpha \leq p - 1$ and $0 \leq j \leq n - 1$, we have
    \[
        g^{(j)}(a + \alpha p^{h - 1}) \equiv g^{(j)}(a) + \alpha p^{h - 1}g^{(j + 1)}(a) + \cdots + \frac{(\alpha p^{h - 1})^{n - 1 - j}}{(n - 1 - j)!}g^{(n - 1)}(a) \mod (p^{h - 1})^{r/2 - j}\pi.
    \]
    Moreover, we have
    \begin{eqnarray*}
        && g^{(j)}(a + \alpha p^{h - 1})(z - a - \alpha p^{h - 1})^j\mathbbm{1}_{a + \alpha p^{h - 1} + p^h\Zp} \\
        && \equiv \left[g^{(j)}(a) + \alpha p^{h - 1}g^{(j + 1)}(a) + \cdots + \frac{(\alpha p^{h - 1})^{n - 1 - j}}{(n - 1 - j)!}g^{(n - 1)}(a)\right](z - a - \alpha p^{h - 1})^{j}\mathbbm{1}_{a + \alpha p^{h - 1} + p^h \Zp}
    \end{eqnarray*}
    modulo $\pi \latticeL{k}$.
\end{lemma}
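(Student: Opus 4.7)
The plan is to first establish the numerical congruence by writing the difference LHS minus RHS as the tail of a Taylor expansion of $g^{(j)}$ around $a$, then to deduce the lattice-theoretic ``moreover'' statement via Lemma~\ref{Integers in the lattice}. Recall from \eqref{Small derivative formula for polynomial times logs} that, for $z \in \Zp \setminus \{z_0\}$ and $0 \leq j \leq n-1$,
\[
g^{(j)}(z) = \tfrac{n!}{(n-j)!}\, p^{x} (z-z_0)^{n-j}\bigl[\logL(z-z_0) + H_n - H_{n-j}\bigr],
\]
while for $j + \ell > n$ the derivative $g^{(j+\ell)}(z)$ has the closed form $p^x n!\,(-1)^{\ell-(n-j)-1}(\ell-(n-j)-1)!/(z-z_0)^{\ell-(n-j)}$. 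The factorial $n!/(n-j)!$ and the harmonic sums are $p$-units since $n \leq p-1$, and $v_p(\logL(\eta)) \geq \min(v_p(\cL), 0)$ for any $\eta \in \Zp^*$.

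I would then split on $m := v_p(a - z_0)$. In the \emph{analytic case} $m < h-1$, the disk $a + p^{h-1}\Zp$ avoids $z_0$ (since $v_p(\alpha p^{h-1}) \geq h-1 > m$), so $g^{(j)}$ admits a convergent Taylor expansion on it. Its first $n - j$ terms give the RHS of the claimed congruence, and after substituting the closed-form expressions above, the tail rearranges to $p^x (\alpha p^{h-1})^{n-j} \cdot F$, where
\[
F = \tfrac{n!}{(n-j)!}\bigl[\logL(a-z_0)+H_n\bigr] + n! \sum_{s\geq 1}\tfrac{(-1)^{s-1}(s-1)!}{(n-j+s)!}\,A^s, \qquad A := \alpha p^{h-1}/(a-z_0),\ v_p(A) \geq 1.
\]
The crucial step is to show $v_p(F) \geq \min(v_p(\cL), 0)$. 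This follows because in a run of $n-j+1 \leq p$ consecutive integers there is at most one multiple of $p$, yielding $v_p\bigl(\tfrac{(s-1)!(n-j)!}{(n-j+s)!}\bigr) \geq -\log_p(n-j+s)$, which combined with $v_p(A^s) \geq s$ gives each tail summand valuation $\geq s - \log_p(n-j+s) \geq 0$. In the \emph{singular case} $m \geq h-1$, both $a-z_0$ and $a+\alpha p^{h-1}-z_0$ have valuation at least $h-1$, and I would bound the LHS and each of the $n-j$ summands on the RHS of the congruence directly from the explicit formula; each has $v_p \geq x + (n-j)(h-1) + \min(v_p(\cL), 0)$, hence so does their difference.

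In either case the difference has $v_p \geq x + (n-j)(h-1) + \min(v_p(\cL), 0)$; to obtain the required bound $\geq (r/2-j)(h-1) + v_p(\pi)$, I rewrite the excess as $(n-r/2)(h-1) + x + \min(v_p(\cL),0)$. Using $n - r/2 \geq 1$ (from $k/2 \leq n$), $h-1 \geq 2$, and $x \geq -1$, this is immediate when $v_p(\cL) \geq 0$; when $v_p(\cL) < 0$ one rewrites as $(n-r/2)(h-2) + [(n-r/2)+x+v_p(\cL)]$, which is $\geq 1 + 0 \geq v_p(\pi)$ thanks to the hypothesis $x + v_p(\cL) \geq r/2-n$. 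For the ``moreover'' statement, if $E$ denotes the numerical error, then $v_p(E) \geq (h-1)(r/2-j) + v_p(\pi)$, so $E/p^{(h-1)(r/2-j)} \in \pi\Oe$, and Lemma~\ref{Integers in the lattice} applied to $p^{(h-1)(r/2-j)}(z-a-\alpha p^{h-1})^j \mathbbm{1}_{a+\alpha p^{h-1}+p^h\Zp} \in \latticeL{k}$ shows that multiplication by $E$ lands in $\pi \latticeL{k}$. The main obstacle will be the combinatorial tail estimate in the analytic case, which hinges on $n \leq p-1$ so that $n-j+1$ consecutive integers contain at most one multiple of $p$; once this is in place, the rest is bookkeeping with the given numerical hypotheses.
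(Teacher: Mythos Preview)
Your proof is correct and follows essentially the same approach as the paper: both split on whether $v_p(a-z_0) < h-1$ (analytic case, use the convergent Taylor expansion and bound the tail) or $v_p(a-z_0) \geq h-1$ (singular case, bound each term directly from the explicit derivative formula), and both deduce the lattice statement from Lemma~\ref{Integers in the lattice}. Your organization is slightly more uniform---you factor the entire tail as $p^x(\alpha p^{h-1})^{n-j}\cdot F$ and prove a single bound $v_p(F)\geq\min(v_p(\cL),0)$, whereas the paper treats the $(n-j)$-th Taylor term and the higher terms separately---but your consecutive-integers estimate on $\tfrac{(s-1)!(n-j)!}{(n-j+s)!}$ is the same Kummer-type bound the paper invokes, and your final numerical comparison showing the excess $(n-r/2)(h-1)+x+\min(v_p(\cL),0)$ exceeds $v_p(\pi)$ is a cleaner packaging of the paper's inequality $(n-j)(h-1)+r/2-n > (h-1)(r/2-j)\iff h>2$. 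One minor imprecision: you assert $v_p(\logL(\eta))\geq\min(v_p(\cL),0)$ for $\eta\in\Zp^*$, but in the analytic case $a-z_0$ need only lie in $\Zp\setminus\{0\}$; the bound nevertheless holds for all $\eta\in\Qp^*$, so the argument goes through.
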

\begin{proof}
  We prove the first congruence in the conclusion of the lemma.
  The proof 
  is similar to the proof that $g(z) \in \sC^{n - 1}(\Zp, E)$. 
  The difference is that we need to keep track of the estimates. We consider two cases.
    \begin{enumerate}
        \item $v_p(a - z_0) < h - 1$:
Since $g$ is analytic in the neighborhood $a + p^{h - 1}\Zp$, using \eqref{Small derivative formula for polynomial times logs}, we write
\begin{eqnarray}\label{Far away from z0 in Truncated Taylor expansion}
    g^{(j)}(a + \alpha p^{h - 1}) & = & g^{(j)}(a) + \cdots + \frac{(\alpha p^{h - 1})^{n - 1 - j}}{(n - 1 - j)!}g^{(n - 1)}(a) \nonumber\\
    & & + \frac{(\alpha p^{h - 1})^{n - j}}{(n - j)!}n!p^{x}[\logL(a - z_0) + H_n] \nonumber \\
    & & + \sum_{l \geq 1}\frac{(\alpha p^{h - 1})^{n - j + l}}{(n - j + l)!}n! p^{x} \frac{(-1)^{l - 1}}{(a - z_0)^l}(l - 1)!.
\end{eqnarray}
Since $x + v_p(\cL) \geq r/2 - n$ and $x \geq -1 \geq r/2 - n$, we see that the valuation of the term in the second line above is at least $(n - j)(h - 1) + r/2 - n$. Now
\begin{equation}\label{h greater than 2}
    (n - j)(h - 1) + r/2 - n > (h - 1)(r/2 - j) \iff h > 2.
\end{equation}
Therefore for $h \geq 3$, the term in the second line in equation \eqref{Far away from z0 in Truncated Taylor expansion} is $0$ modulo $(p^{h - 1})^{r/2 - j}\pi$. Next, write the general term in the last sum in equation \eqref{Far away from z0 in Truncated Taylor expansion} as
\[
    p^{x}(\alpha p^{h - 1})^{n - j}\frac{n!}{(n - j)!}\left(\frac{\alpha p^{h - 1}}{a - z_0}\right)^{l}(-1)^{l - 1}\frac{1}{l{n - j + l \choose l}}.
\]
Using the estimate $v_p{n - j + l \choose l} \leq \lfloor \log_p(n - j + l)\rfloor - v_p(l)$ obtained using Kummer's theorem, the valuation of this term is greater than or equal to
\[
    -1 + (n - j)(h - 1) + l - \log_p(n - j + l).
\]
Moreover,
\begin{eqnarray*}
    &&-1 + (n - j)(h - 1) + l - \log_p(n - j + l) > (r/2 - j)(h - 1) \\
    && \qquad \stackrel{h \geq 3}{\iff} 2n - r - 1 + l > \log_p(n - j + l),
\end{eqnarray*}
which is true for all $l \geq 1$. Therefore we have proved the first congruence in the lemma when $v_p(a - z_0) < h - 1$.

\item $v_p(a - z_0) \geq h - 1$: Recall the derivative formula \eqref{der formula}:
\begin{eqnarray}\label{Derivative formula for g(z)}
    g^{(j)}(z) = \left[\frac{n!}{(n - j)!}p^x(z - z_0)^{n - j}\logL(z - z_0) + p^x t_j (z - z_0)^{n - j}\right]\mathbbm{1}_{\Zp},
\end{eqnarray}
where $t_j \in \Zp$. The valuations of the terms $g^{(j)}(a + \alpha p^{h - 1}), \> g^{(j)}(a), \> \alpha p^{h - 1}g^{(j+1)}(a), \> \ldots,$ $(\alpha p^{h - 1})^{n - 1 - j}g^{(n - 1)}(a)$ are greater than or equal to $(n - j)(h - 1) + r/2 - n$. So equation \eqref{h greater than 2} implies that these terms are congruent to $0$ modulo $(p^{h - 1})^{r/2 - j}\pi$. This proves the first congruence in the lemma when $v_p(a - z_0) \geq h - 1$.
\end{enumerate}

\noindent The second congruence in the conclusion of the lemma follows from the first
using Lemma~\ref{Integers in the lattice}.
\end{proof}

\begin{lemma}\label{telescoping lemma}
    Let $k/2 \leq n \leq r \leq p - 1$ and $z_0 \in \Zp$. Fix $x \in \bQ$ such that $x \geq - 1$ and $x + v_p(\cL) \geq r/2 - n$. 
    If
        $$g(z) = p^x(z - z_0)^n\logL(z - z_0)\mathbbm{1}_{\Zp},$$
    then we have $g_h(z) \equiv g_2(z) \mod \pi\latticeL{k}$ for $h \geq 3$.
\end{lemma}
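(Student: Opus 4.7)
The plan is to prove this by telescoping: show $g_{h+1}(z) \equiv g_{h}(z) \pmod{\pi \latticeL{k}}$ for every $h \geq 2$, then iterate.

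First I would decompose the finer sum defining $g_{h+1}(z)$ by writing each residue $m' \in \{0,\ldots,p^{h+1}-1\}$ uniquely as $m' = a + \alpha p^{h}$ with $0 \leq a \leq p^{h}-1$ and $0 \leq \alpha \leq p-1$. This gives
\[
   g_{h+1}(z) = \sum_{a=0}^{p^{h}-1}\sum_{\alpha=0}^{p-1} \sum_{j=0}^{n-1} \frac{g^{(j)}(a+\alpha p^{h})}{j!}(z-a-\alpha p^{h})^{j}\,\mathbbm{1}_{a+\alpha p^{h}+p^{h+1}\Zp}(z).
\]
Next, applying Lemma~\ref{Truncated Taylor expansion} (with $h$ shifted to $h+1$, which requires $h+1 \geq 3$, i.e.\ $h \geq 2$, and uses $0 \leq a \leq p^{h}-1$), the second displayed congruence there lets me replace each inner coefficient $g^{(j)}(a+\alpha p^{h})$ on the indicated indicator-supported piece by its truncated Taylor polynomial
\[
   g^{(j)}(a) + \alpha p^{h}\,g^{(j+1)}(a) + \cdots + \frac{(\alpha p^{h})^{n-1-j}}{(n-1-j)!}\,g^{(n-1)}(a),
\]
with the error lying in $\pi\latticeL{k}$.

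Once this substitution is made, the verification becomes a purely algebraic identity, not a further estimate. Indeed, for each fixed $a$ and $\alpha$, the binomial expansion
\[
   (z-a)^{j} = \bigl((z-a-\alpha p^{h}) + \alpha p^{h}\bigr)^{j} = \sum_{i=0}^{j}\binom{j}{i}(z-a-\alpha p^{h})^{j-i}(\alpha p^{h})^{i}
\]
reindexed with $j' = j-i$, $l = i$, yields exactly
\[
   \sum_{j=0}^{n-1}\frac{g^{(j)}(a)}{j!}(z-a)^{j} = \sum_{j'=0}^{n-1}\sum_{l=0}^{n-1-j'}\frac{g^{(j'+l)}(a)}{j'!\,l!}(\alpha p^{h})^{l}(z-a-\alpha p^{h})^{j'},
\]
i.e.\ the replaced summand in $g_{h+1}$. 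Summing this identity over $\alpha$ and using $\mathbbm{1}_{a+p^{h}\Zp} = \sum_{\alpha=0}^{p-1}\mathbbm{1}_{a+\alpha p^{h}+p^{h+1}\Zp}$ recovers exactly the $(a)$-th summand in $g_{h}(z)$. Summing over $a$ then gives $g_{h+1}(z) \equiv g_{h}(z) \pmod{\pi \latticeL{k}}$.

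The induction hypothesis chain $g_{h} \equiv g_{h-1} \equiv \cdots \equiv g_{2} \pmod{\pi\latticeL{k}}$ then finishes the proof for all $h \geq 3$. The main (and only) obstacle is the bookkeeping in step two: one must verify carefully that the hypotheses of Lemma~\ref{Truncated Taylor expansion} apply termwise with $h$ replaced by $h+1$ (in particular, that $x \geq -1$ and $x + v_{p}(\cL) \geq r/2 - n$ carry over unchanged), so that the error in each term genuinely lies in $\pi\latticeL{k}$ before reassembling. After that, the binomial re-summation is a formal identity, which is why no additional hypothesis is needed to get an exact match after applying the truncation.
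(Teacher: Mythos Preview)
Your proof is correct and follows essentially the same approach as the paper: decompose the finer partial sum, invoke Lemma~\ref{Truncated Taylor expansion} termwise, and then use a binomial re-summation to collapse to the coarser partial sum. The only cosmetic difference is that you prove $g_{h+1}\equiv g_h$ for $h\geq 2$ while the paper proves $g_h\equiv g_{h-1}$ for $h\geq 3$, which is the same statement after reindexing.
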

\begin{proof}
    Making the substitution $m = a + \alpha p^{h - 1}$, we write
    \[
        g_h(z) = \sum_{a = 0}^{p^{h - 1} - 1}\sum_{\alpha = 0}^{p - 1}\left[ \sum_{j = 0}^{n - 1} \frac{g^{(j)}(a + \alpha p^{h - 1})}{j!}(z - a - \alpha p^{h - 1})^j\right]\mathbbm{1}_{a + \alpha p^{h - 1} + p^h \Zp}.
    \]
    %
By Lemma \ref{Truncated Taylor expansion}
    \begin{eqnarray*}
        g_h(z) \equiv \sum_{a = 0}^{p^{h - 1} - 1}\sum_{\alpha = 0}^{p - 1} && \!\!\!\! \!\!\!\! \left\{\left[\frac{1}{0! \> 0!}g(a) + \frac{\alpha p^{h - 1}}{0! \> 1!} g^{(1)}(a) + \cdots + \frac{(\alpha p^{h - 1})^{n - 2}}{0! \> (n - 2)!}g^{(n - 2)}(a) + \frac{(\alpha p^{h - 1})^{n - 1}}{0! \> (n - 1)!}g^{(n - 1)}(a)\right] \right.\\
        && \!\!\!\!\!\!\!\! + \left[\frac{1}{1! \> 0!}g^{(1)}(a) + \frac{\alpha p^{h - 1}}{1! \> 1!} g^{(2)}(a) + \cdots + \frac{(\alpha p^{h - 1})^{n - 2}}{1! \> (n - 2)!}g^{(n - 1)}(a)\right](z - a - \alpha p^{h - 1}) \\ 
        && \quad \quad \vdots \qquad \qquad \qquad \vdots \\
        && \!\!\!\!\!\!\!\! + \left.\left[\frac{1}{(n - 1)! \> 0!}g^{(n - 1)}(a)\right](z - a - \alpha p^{h - 1})^{n - 1}\right\} \mathbbm{1}_{a + \alpha p^{h - 1} + p^h\Zp} \mod \pi\latticeL{k}.
    \end{eqnarray*}
    Expanding $[(z - a) - \alpha p^{h - 1}]^j$ for $0 \leq j \leq n - 1$ and collecting like terms of $(z - a)$ diagonally, the terms involving $\alpha$ disappear
    \begin{eqnarray*}
        g_h(z) & \equiv & \sum_{a = 0}^{p^{h - 1} - 1}\sum_{\alpha = 0}^{p - 1}\left[\sum_{j = 0}^{n - 1}\frac{g^{(j)}(a)}{j!}(z - a)^{j}\right]\mathbbm{1}_{a + \alpha p^{h - 1} + p^h\Zp}  \\
        & \equiv & \sum_{a = 0}^{p^{h - 1} - 1}\left[\sum_{j = 0}^{n - 1} \frac{g^{(j)}(a)}{j!}(z - a)^j\right]\mathbbm{1}_{a + p^{h - 1}\Zp} \equiv g_{h - 1}(z) \mod \pi \latticeL{k}.
    \end{eqnarray*}
    Iterating this process, we get $g_h(z) \equiv g_2(z) \mod \pi \latticeL{k}$. 
    Therefore the claim follows.
\end{proof}

    \begin{lemma}\label{qp-zp part is 0}
      Let $k/2 \leq n \leq r \leq p - 1$. For $i$ in a finite indexing set $I$, let $\lambda_i \in E$ and $z_i \in \Zp$ be such that $\sum_{i \in I}\lambda_i z_i^j = 0$ for all $0 \leq j \leq n$.
      Fix $x \in \bQ$ such that $x \geq -1$. Then for
        \[
            f(z) = \sum_{i \in I}p^x\lambda_i z^{r - n}(1 - zz_i)^n \logL(1 - zz_i)\mathbbm{1}_{p\Zp},
        \]
        we have $f(z) \equiv 0 \mod \pi \latticeL{k}$.
    \end{lemma}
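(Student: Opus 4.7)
The plan is to exploit the fact that on $p\Zp$, the argument $1-zz_i$ of $\logL$ lies in $1+p\Zp$, so $\logL(1-zz_i)$ coincides with the ordinary Iwasawa logarithm $\log(1-zz_i)$ (no $\cL$-dependence) and admits a convergent Taylor series in $zz_i$. First, I would substitute the classical identity $(1-u)^n\log(1-u) = -\sum_{l\geq 1}\gamma_l u^l$, with $\gamma_l = \sum_{k=0}^{\min(n,l-1)}\binom{n}{k}(-1)^k/(l-k)$ (which telescopes via partial fractions to $\gamma_l = (-1)^n n!/[l(l-1)\cdots(l-n)]$ when $l > n$), at $u = zz_i$, multiply by $\lambda_i$, and sum over $i$. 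The vanishing hypothesis $\sum_i\lambda_i z_i^j = 0$ for $0 \leq j \leq n$ then annihilates every term with $l \leq n$, yielding
\[
  f(z) = -p^x\sum_{l\geq n+1}\gamma_l\,z^{r-n+l}\Big(\sum_i\lambda_i z_i^l\Big)\mathbbm{1}_{p\Zp},
\]
in which every surviving monomial $z^M\mathbbm{1}_{p\Zp}$ has degree $M := r-n+l \geq r+1$.

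Next, I would show that each term $p^x z^M\mathbbm{1}_{p\Zp}$ with $M \geq r+1$ lies in $\pi\latticeL{k}$ via a wavelet decomposition. Partitioning $p\Zp = \bigsqcup_{m\in p\Zp,\,0\leq m<p^h}(m+p^h\Zp)$ for a suitable $h$ and expanding $z^M = (m+(z-m))^M = \sum_{k=0}^M\binom{M}{k}m^{M-k}(z-m)^k$ on each coset, the contributions with $0 \leq k \leq r$ are bounded by Lemma~\ref{Integers in the lattice} (or, for $k \geq r/2$, the sharper Lemma~\ref{stronger bound for polynomials of large degree with varying radii}), while the factor $m^{M-k}$ contributes $v_p \geq M-k \geq 1$ because $v_p(m)\geq 1$ for the non-trivial cosets; combined with $p^x$ (where $x\geq -1$) this gives total lattice valuation $\geq 1$, i.e., membership in $\pi\latticeL{k}$. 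The residual high-degree pieces $(z-m)^k\mathbbm{1}_{m+p^h\Zp}$ with $k > r$ are not directly covered by those lemmas; I would handle them via the matrix identity
\[
  (z-m)^k\mathbbm{1}_{m+p^h\Zp} = p^{h(k-r/2)}\cdot\bigl(g_{h,m}\cdot(z^k\mathbbm{1}_{\Zp})\bigr),\qquad g_{h,m}=\begin{pmatrix}1 & 0 \\ -m & p^h\end{pmatrix}\in G,
\]
together with the $G$-stability of $\latticeL{k}$, reducing the problem to the auxiliary claim $z^k\mathbbm{1}_{\Zp}\in\latticeL{k}$ for all $k\geq 0$. For $k \leq r$ this is Lemma~\ref{Integers in the lattice}, and for $k > r$ it follows by an inductive subdivision of $\Zp$ into finer cosets in the spirit of the telescoping argument of Lemma~\ref{telescoping lemma}, each refinement producing an extra factor of $p$ from the binomial expansion coefficients.

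The main obstacle is the rigorous bookkeeping of lattice valuations for these residual high-degree terms $(z-m)^k\mathbbm{1}_{m+p^h\Zp}$ with $k > r$, together with the absolute convergence of the infinite series in $l$: the crude estimate $v_p(\gamma_l) \geq -v_p(l(l-1)\cdots(l-n)) = O(-\log_p l)$ is dominated by the linear growth of the monomial valuation on $p\Zp$, so the terms tend to $0$ in $\latticeL{k}$, and since every individual term lies in $\pi\latticeL{k}$, the total sum does as well, yielding $f(z)\equiv 0 \mod \pi\latticeL{k}$.
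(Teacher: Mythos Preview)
Your overall strategy---Taylor-expand $(1-zz_i)^n\log(1-zz_i)$ on $p\Zp$, kill the low-degree terms with the vanishing hypotheses, and then bound each surviving monomial $z^M\mathbbm{1}_{p\Zp}$ with $M\geq r+1$---is genuinely different from the paper's route, which (following \cite[Lemma 8.5]{CG23}) instead passes through the locally polynomial approximation $f_2$: one shows $f\equiv f_2\bmod\pi\latticeL{k}$ via the telescoping machinery, writes $f_2$ as a finite sum of pieces $\frac{f^{(j)}(\alpha p)}{j!}(z-\alpha p)^j\mathbbm{1}_{\alpha p+p^2\Zp}$ with $j\leq n-1$, estimates each $f^{(j)}(\alpha p)$ by expanding the log and invoking $\sum_i\lambda_i z_i^l=0$ to push $l$ past $n-m$, and then applies Lemma~\ref{Integers in the lattice}. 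The crucial advantage of the paper's approach is that it never leaves the range of degrees $j\leq n-1<r$, where the lattice lemmas apply directly.

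Your proposal, by contrast, produces monomials $z^M\mathbbm{1}_{p\Zp}$ with $M>r$, and here there is a real gap. Your treatment of the residual pieces $(z-m)^k\mathbbm{1}_{m+p^h\Zp}$ with $k>r$ reduces to the auxiliary claim $z^k\mathbbm{1}_{\Zp}\in\latticeL{k}$, which you justify by ``inductive subdivision \ldots\ each refinement producing an extra factor of $p$ from the binomial expansion coefficients.'' But this is circular: when you subdivide $\Zp=\bigsqcup_a(a+p\Zp)$ and binomially expand $z^k=\sum_j\binom{k}{j}a^{k-j}(z-a)^j$, the extra factors of $p$ appear only in the coefficients $a^{k-j}$ of the \emph{lower}-degree pieces $j<k$ (and only when $p\mid a$); the top-degree piece $(z-a)^k\mathbbm{1}_{a+p\Zp}$ has coefficient $1$ and is just a $G$-translate of the very object $z^k\mathbbm{1}_{p\Zp}$ you are trying to control. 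Iterating the subdivision produces an unbounded number of such top-degree residuals with no gain in valuation, so the induction does not close. To make your approach work you would need an independent bound of the form $z^M\mathbbm{1}_{p\Zp}\in p^{c(M)}\latticeL{k}$ with $c(M)$ growing fast enough; this is not available from Lemmas~\ref{Integers in the lattice} or~\ref{stronger bound for polynomials of large degree with varying radii} (which require $j\leq r$), and establishing it amounts to computing the gauge norm of these high-degree monomials, which is essentially as hard as the original problem.
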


     We refer the reader to \cite[Lemma 8.5]{CG23} since giving the proof here will take us too far away
        from the goal of these notes.
    We do however give an example of a choice of $\lambda_i$ and $z_i$ which make the
    sums in the above lemma vanish.
    
        \begin{lemma}\label{Main coefficient identites}
            If
                $I = \{0, \> 1, \> \ldots, n, \> p\}$ for some $r/2 < n \leq r \leq p - 1$, and $z_i = i$ for $i \in I$,
            then there are $\lambda_i \in \zp$ not all zero such that $\sum\limits_{i \in I}\lambda_i z_i^j = 0$ for $0 \leq j \leq n$. Moreover, these $\lambda_i$ satisfy
                $\lambda_0 = 1 \!\! \mod p$, $\lambda_p = -1$ and $\lambda_i = 0 \!\! \mod p$ for $1 \leq i \leq n$.
        \end{lemma}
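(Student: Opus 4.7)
The plan is to produce the $\lambda_i$ explicitly via Lagrange interpolation. Since the conditions $\sum_{i \in I} \lambda_i z_i^j = 0$ for $0 \leq j \leq n$ constitute $n+1$ linear equations in $n+2$ unknowns, the solution space is at least one-dimensional; reformulating, these conditions are equivalent to saying that the linear functional $P \mapsto \sum_{i \in I} \lambda_i P(z_i)$ annihilates every polynomial $P \in E[z]$ of degree at most $n$. So I would look for an explicit relation of this form by interpolating between the points $0, 1, \ldots, n$ and then evaluating at $p$.

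More precisely, I would use the fact that a polynomial $P$ of degree $\leq n$ is determined by its values at the $n+1$ points $0, 1, \ldots, n$ via Lagrange interpolation, so that
\[
P(p) = \sum_{i=0}^{n} P(i) \prod_{\substack{0 \leq k \leq n \\ k \neq i}} \frac{p-k}{i-k}.
\]
Transposing the $P(p)$ term to the other side gives the desired annihilation relation, and one reads off the candidate values
\[
\lambda_i = \prod_{\substack{0 \leq k \leq n \\ k \neq i}} \frac{p-k}{i-k} \quad \text{for } 0 \leq i \leq n, \qquad \lambda_p = -1.
\]
Immediately $\lambda_p = -1$, as required.

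The main step will then be the computation of the mod $p$ values of $\lambda_i$ for $0 \leq i \leq n$; fortunately, the assumption $n \leq r \leq p-1$ keeps all the integers appearing in denominators strictly between $-p$ and $p$, hence $p$-adic units, so all $\lambda_i$ automatically lie in $\zp$. For $\lambda_0$, only the $k=0$ index is absent and we get $\lambda_0 = \prod_{k=1}^n (p-k)/(-k)$; each factor satisfies $p-k \equiv -k \pmod p$ with $-k$ a unit, so each factor is $\equiv 1 \pmod p$ and hence $\lambda_0 \equiv 1 \pmod p$. For $1 \leq i \leq n$, the index $k=0$ is present in the product $\lambda_i = \prod_{k \neq i} (p-k)/(i-k)$, contributing the single factor $p/i$ with $i \in \zp^\times$; every other factor $(p-k)/(i-k)$ with $1 \leq k \leq n$, $k \neq i$, has numerator and denominator both non-zero integers of absolute value less than $p$, hence both units. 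Thus $\lambda_i$ is exactly $p$ times a unit, and in particular $\lambda_i \equiv 0 \pmod p$.

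I do not anticipate any real obstacle here: the Lagrange formula is well-behaved in this range because $n < p$ prevents any unwanted cancellation or appearance of $p$ in denominators, so the mod $p$ computations are straightforward. The only tiny subtlety to watch is keeping track of signs when expressing $\lambda_0$, but in any case the two sign factors $(-1)^n$ from numerator and denominator cancel.
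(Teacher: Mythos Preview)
Your proof is correct and follows essentially the same strategy as the paper: fix $\lambda_p = -1$ and then solve the resulting Vandermonde system $\sum_{i=0}^n \lambda_i\, i^j = p^j$ for $0 \le j \le n$. The only difference is in execution: the paper argues that the Vandermonde matrix with nodes $0,1,\ldots,n$ is invertible over $\zp$ (since the nodes are distinct modulo $p$), and then determines the mod $p$ values of the $\lambda_i$ by reducing the system and invoking uniqueness of the mod $p$ solution $(\bar\lambda_0,\ldots,\bar\lambda_n)=(1,0,\ldots,0)$; you instead write down the explicit solution via the Lagrange interpolation formula $\lambda_i = \prod_{k \ne i}(p-k)/(i-k)$ and read off the valuations directly. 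Your approach has the small bonus of giving an explicit closed form for the $\lambda_i$ (and even shows $v_p(\lambda_i)=1$ for $1\le i\le n$), while the paper's uniqueness argument avoids any product manipulation.
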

     
        \begin{proof}

  Taking $\lambda_p = -1$ and separating the $i = p$ summand from
    the equations $\sum\limits_{i \in I}\lambda_i i^j = 0$, we see that for $0 \leq j \leq n$, we have
                \[
                    \sum_{i = 0}^{n}\lambda_i i^j = p^j.
                \]
                Since $i \not \equiv i' \!\! \mod p$ for $0 \leq i, \> i' \leq n \leq p - 1$, there is a unique solution to the system of equations above with $\lambda_i \in \Zp$. Moreover, reducing the equations in the display above modulo $p$, we get
                \[
                    \sum_{i = 0}^{n}\br{\lambda_i}\> \br{i}^j = \begin{cases}
                    1 & \text{ if } j = 0 \\
                    0 & \text{ if } 1 \leq j \leq n.
                    \end{cases}
                \]
                Since $\br{\lambda_0} = 1$ and $\br{\lambda_i} = 0$ for $1 \leq j \leq n$ is the unique solution to the mod $p$ system of equations in the display above, we obtain the lemma.
               %
        \end{proof}

\subsubsection{Strategy} We now give an outline of the strategy we used to establish congruences using
the generalities above. These congruences allow us to determine the JH factors in the reduction 
 of $\latticeL{k}$.

We first choose in an intelligent
way a poly$\cdot$log function $$g : \qp \rightarrow E$$
as in equation \eqref{polylog fn}. 
The importance of choosing this function carefully cannot
be overemphasized. 
Note that $g(z) \in D(k)$. Write
$$g(z) = g(z) \mathbbm{1}_{\zp} + g(z) \mathbbm{1}_{\qp \setminus \zp}.$$
We claim that $g(z) \mathbbm{1}_{\zp} \in D(k)$. Indeed, the function
$$z^r g(1/z) \mathbbm{1}_{\zp}(1/z) = z^rg(1/z) \mathbbm{1}_{\zp \setminus p\zp}(z) = z^rg(1/z) - z^rg(1/z) \mathbbm{1}_{p\zp}(z)$$ on $\zp \setminus 0$ extends to a function in $\sC^{r/2}(\Zp, E)$, since
$z^rg(1/z)$ does by definition of $D(k)$, and an easy further check shows that multiplication by $\mathbbm{1}_{p\zp}$
preserves the space $\sC^{r/2}(\Zp, E)$. It follows that $g(z) \mathbbm{1}_{\qp \setminus \zp} \in D(k)$.
Now:

\begin{itemize}
\item For large $h$ we have
  $g(z) \mathbbm{1}_{\zp}  \equiv \tilde{g}_h(z) \mod \pi \latticeL{k}$. 
  
  Indeed  by \eqref{Convergence proposition}, we
  have that $\tilde{g}_h$ is close to $g(z) \mathbbm{1}_{\zp}$ in the $v'_{\sC^{r/2}}$-adic topology. Recall that
  by Theorem~\ref{theorem bb}, the locally polynomial functions $e_{i,j,{r/2}}$ for $i \geq 0$ and 
  $0\leq j \leq \lfloor r/2 \rfloor$ form a Banach basis
  of $\sC^{r/2}(\zp,E)$. So for large $h$ there are constants $a_{i,j}$ with $v_p(a_{i,j}) > 0$ such
  that $$g(z) \mathbbm{1}_{\zp}  - \tilde{g}_h(z) = \sum_{i,j} a_{i,j} e_{i,j,r/2}.$$ We can check that this is
  an equality of functions in $D(k)$. Since $$\lfloor l(i) (r/2) \rfloor - l(i) j \geq( l(i) - 1)(r/2 -j),$$ we deduce 
  by Lemma \ref{Integers in the lattice} that the $e_{i,j,{r/2}}$ are in the lattice $\latticeL{k}$.
  
\item Then using Lemma \ref{Congruence lemma}, we see that $g(z) \mathbbm{1}_{\Zp} \equiv g_h(z) \mod \pi\latticeL{k}$        
  for large $h$. 
\item Then using Lemma~\ref{Truncated Taylor expansion} and Lemma~\ref{telescoping lemma} we may assume
  that we can descend from large $h$ to $h = 2$
  (though as we see in \cite{CG23} for some exceptional $g$ we can only descend to $h = 3$.) 
\item We also prove that 
  $g(z)\mathbbm{1}_{\Qp \setminus \Zp} \equiv 0 \mod \pi \latticeL{k}$ using Lemma \ref{qp-zp part is 0} (though as we
  see in \cite{CG23} in an exceptional case this function also contributes to the argument).
\end{itemize}
Since $g(z)$ is equal to $0$ in $\tB(k, \cL)$, we get a congruence $$g_2(z) \equiv 0 \mod \pi\latticeL{k}$$ 
(or $g_3(z) \equiv 0 \mod \pi \latticeL{k}$ for some functions $g$).

These congruences
allow us to show that the subquotients $F_{2l,2l+1}$ for $0 \leq l \leq r$ of $\br{\latticeL{k}}$ are
quotients of cokernels of linear expressions involving Iwahori-Hecke operators acting on
compactly induced representations from
$IZ$ to $G$. Finally, we are able to deduce Theorem~\ref{Main theorem in the second part of my thesis}  by applying the Iwahori mod $p$ LLC stated in Theorem~\ref{Iwahori mod p LLC}.

\subsection{Analysis of $\br{\latticeL{k}}$ around all points but the last}\label{Common section}
    
Section 9 of \cite{CG23} gives a uniform treatment of all the subquotients around the marked points on the $\nu$ line appearing in
Theorem~\ref{Main theorem in the second part of my thesis} except for the last marked point.
The goal of this section is to describe how this is done in a simplified example.  

Note that the 
last marked point $\nu = \frac{1}{2}$, respectively $\nu = 0$, for $r$ odd, respectively $r$ even, is particularly tricky to deal
with and  requires much additional work (see \cite[Sections 10, 11]{CG23}). In fact, the former case requires establishing a formula for
the constant $\lambda_i$ for  $i = \frac{r+1}{2}$ with $r$ odd which requires a much more elaborate treatment. 
The latter case requires
working in a non-commutative Hecke algebra. We do not make any further comments about the
behaviour of the subquotients about these last marked points in these notes.

    In this section, we give a sketch of the proof of the fact that if $\nu = i - r/2$ for $i = 1, \> 2, \> \ldots, \> \lceil r/2 \rceil - 1$, then the map $\IZind a^i d^{r - i} \twoheadrightarrow F_{2i, \> 2i + 1}$ factors as
    \[
        \IZind a^i d^{r - i} \twoheadrightarrow \frac{\IZind a^i d^{r - i}}{\im(T_{1,  2} - \lambda_i)} \twoheadrightarrow F_{2i, \> 2i + 1}, 
    \]
    where $$\lambda_i = (-1)^i i {r - i + 1 \choose i}p^{r/2 - i}\cL.$$ Moreover, 
    we prove that 
    the second map in the display above induces a surjection 
    \[
        \pi(r - 2i, \lambda_i, \omega^i) \twoheadrightarrow F_{2i, \> 2i + 1}.
    \]
Further details may be found in \cite[Section 9.2]{CG23}.

Here is the key technical proposition. It starts with an intelligently chosen function $g : \qp \rightarrow E$ 
as in \eqref{polylog fn}.

    \begin{Proposition}{\cite[Proposition 9.6]{CG23}}\label{nu leq}
        Let $k/2 < n \leq k - 2 \leq p - 1$. For $\nu \leq 1 + r/2 - n$, set
        \[
            g(z) = p^x\left[\sum_{i \in I} \lambda_i (z - i)^n\logL(z - i)\right],
        \]
        where $x \in \bQ$ with $x + \nu = r/2 - n$, $I = \{0, 1, \ldots, n, p\}$ and the $\lambda_i \in \zp$
        are as in Lemma~\ref{Main coefficient identites}. 
        Then, we have
        \[
            g(z) \equiv p^{1 + x}\sum_{a = 1}^{p - 1}a^{-1}z^n\mathbbm{1}_{a + p\Zp} + \sum_{j = \lceil r/2 \rceil}^{n - 1}(-1)^{n - j + 1}{n \choose j}p^{x + n - j}\cL z^j \mathbbm{1}_{p\Zp} \mod \pi \latticeL{k}.
        \]
    \end{Proposition}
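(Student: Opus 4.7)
The function $g$ is engineered so that the vanishing identities $\sum_{i\in I}\lambda_i i^j=0$ for $0\le j\le n$ hold (Lemma~\ref{Main coefficient identites}). This serves two purposes: it puts $g$ in $L(k,\cL)$ (so $g$ represents zero in $\tB(k,\cL)$, which is the ultimate source of the non-trivial relation in $\br{\latticeL{k}}$ extracted from the proposition), and it collapses the derivative formula~\eqref{Small derivative formula for polynomial times logs} to
\[
g^{(j)}(m) \;=\; p^{x}\,\frac{n!}{(n-j)!}\sum_{i\in I}\lambda_i\,(m-i)^{n-j}\logL(m-i), \qquad 0 \le j \le n-1,
\]
by killing the harmonic-sum contribution. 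The plan is to follow the four-step strategy described right after Lemma~\ref{Main coefficient identites}.

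First I would dispose of the $\Qp\setminus\Zp$ part. Applying the Weyl element $w$ to $g\mathbbm{1}_{\Qp\setminus\Zp}$ and using the vanishing sums to cancel the $\logL(z)$ singularity at the origin, one obtains $p^x\sum_i\lambda_i z^{r-n}(1-iz)^n\logL(1-iz)\mathbbm{1}_{p\Zp}$, which is exactly the shape handled by Lemma~\ref{qp-zp part is 0}; the required hypothesis $x\ge -1$ is equivalent to the standing assumption $\nu\le 1+r/2-n$. Applying $w$ once more yields $g\mathbbm{1}_{\Qp\setminus\Zp}\equiv 0\pmod{\pi\latticeL{k}}$.

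For $g\mathbbm{1}_{\Zp}$ I would run the standard telescoping chain. Proposition~\ref{prop loc poly} together with Theorem~\ref{theorem bb} and Lemma~\ref{Integers in the lattice} gives $g\mathbbm{1}_{\Zp}\equiv \tilde g_h\pmod{\pi\latticeL{k}}$ for $h$ large; Lemma~\ref{Congruence lemma} with $t=n-1$ upgrades $\tilde g_h$ to $g_h$ (using continuity of derivatives up to order $n-1$); and Lemma~\ref{telescoping lemma}, applied to each summand $p^x\lambda_i(z-i)^n\logL(z-i)\mathbbm{1}_{\Zp}$, telescopes $g_h$ down to $g_2$ for all $h\ge 3$. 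The hypothesis $x+v_p(\cL)\ge r/2-n$ of Lemma~\ref{telescoping lemma} is automatic here, because $\nu\le 1+r/2-n$ combined with $n>k/2=r/2+1$ forces $\nu<0$, and then $\nu=v_p(\cL-H_--H_+)$ with $H_\pm\in\Zp$ forces $v_p(\cL)=\nu$.

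The substantive task is to identify $g_2(z)$ explicitly with the right-hand side of the proposition. I would parameterise $m\in\{0,\dots,p^2-1\}$ as $m=b+\alpha p$ with $b,\alpha\in\{0,\dots,p-1\}$ and split into two cases. When $b\in\{1,\dots,p-1\}$, so that $m$ is a $p$-adic unit, only the $i=0$ term (where $\lambda_0\equiv 1\bmod p$) and the $i=p$ term (where $\lambda_p=-1$) contribute modulo $\pi\latticeL{k}$; the terms with $i\in\{1,\dots,n\}$ carry an extra factor $v_p(\lambda_i)\ge 1$ and are killed by Lemmas~\ref{Integers in the lattice} and~\ref{stronger bound for polynomials of large degree with varying radii}. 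Summing the Taylor expansions across the $p$ sub-intervals $b+\alpha p+p^2\Zp$ and exploiting the same telescoping identity that drives the proof of Lemma~\ref{telescoping lemma}, the $\alpha$-dependence collapses, the $i=0$ and $i=p$ contributions combine via $p^{-1}[\logL(b+\alpha p)-\logL(b+\alpha p-p)]\equiv b^{-1}\bmod\pi$, and the net contribution on $a+p\Zp$ reproduces the $(z-a)^k$-expansion of $p^{1+x}a^{-1}z^n$ for $0\le k\le n-1$ (the missing $k=n$ term is absorbed into $\pi\latticeL{k}$ by Lemma~\ref{stronger bound for polynomials of large degree with varying radii}, using $n>r/2+1$). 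When $b=0$ so that $m\in p\Zp$, the logarithm $\logL(\alpha p-p)=\cL+\log(\alpha-1)$ injects the $\cL$-factor and the vanishing sums $\sum_i\lambda_i i^k=0$ eliminate everything else, leaving exactly the claimed coefficients $(-1)^{n-j+1}\binom{n}{j}p^{x+n-j}\cL$ on $p\Zp$. The main obstacle is precisely this final case analysis: it is a careful bookkeeping exercise requiring expansion of every $\logL$, tracking the distinction between an integer $b$ and its Teichm\"uller lift $[b]$, and correct combinatorial management of factorials and binomial coefficients; the template to imitate is the proof of the parallel Proposition~9.6 in \cite{CG23} (the $\nu\ge i-r/2$ analogue), where exactly this computation is performed with Teichm\"uller representatives $[i]$ in place of the integers $i\in I$.
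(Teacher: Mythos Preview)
Your proposal is correct and follows essentially the same approach as the paper. The one organizational difference worth noting: the paper cleanly separates the descent into two stages, first proving $g_2(z)\equiv g_1(z)\pmod{\pi\latticeL{k}}$ (your ``$\alpha$-dependence collapses'' step, carried out via the coefficient estimate \eqref{jth summand in g2 - g1 in leq}) and only then evaluating $g_1(z)$ at the $p$ residues $a=0,1,\ldots,p-1$, whereas you merge these two steps into a single level-$2$ case analysis on $m=b+\alpha p$. The paper's two-stage route is slightly cleaner because at level $1$ the dangerous argument $m-i=0$ can occur only once (at $a=0$, $i=0$, where the term vanishes anyway), while at level $2$ you must also track $\alpha p-p=0$ when $b=0$, $\alpha=1$; your sketch glosses over this. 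Also, a small slip: the ``parallel proposition'' you cite as a template is not Proposition~9.6 of \cite{CG23} (that is the present proposition) but its $\nu\ge i-r/2$ companion.
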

    \begin{proof}

      Write $g(z) = g(z)\mathbbm{1}_{\Zp} + g(z)\mathbbm{1}_{\Qp \setminus \Zp}$. By Lemma~\ref{Main coefficient identites}, 
      we have
      \begin{eqnarray}\label{Identities concerning the coefficients in leq}
        \sum_{i \in I}\lambda_i i^{j} = 0, \text{ for } 0 \leq j \leq n,
        \> \lambda_0 \equiv 1 \!\! \mod p, \> \lambda_i \equiv 0 \!\! \mod p \text{ for }
                1 \leq i \leq n  \text{ and } \lambda_p = -1. 
      \end{eqnarray}
      Since the summation identity in \eqref{Identities concerning the coefficients in leq} is also true
      for $j = n$, the $\logL(z)$ term dies and so
        \[
            w\cdot g(z)\mathbbm{1}_{\Qp \setminus \Zp} = \sum_{i \in I}p^x\lambda_iz^{r - n}(1 - zi)^n\logL(1 - zi)\mathbbm{1}_{p\Zp}.
        \]
        Lemma \ref{qp-zp part is 0} implies that $w\cdot g(z) \mathbbm{1}_{\Qp \setminus \Zp} \equiv 0 \mod \pi \latticeL{k}$.
    
        Next, using Lemmas~\ref{Congruence lemma}, \ref{Truncated Taylor expansion} and~\ref{telescoping lemma} 
         we see that $g(z)\mathbbm{1}_{\Zp} \equiv g_2(z) \mod \pi \latticeL{k}$. We next claim that $g_2(z) - g_1(z) \equiv 0 \mod \pi \latticeL{k}$. 
         We see that the coefficient of $(z - a - \alpha p)^j \mathbbm{1}_{a + \alpha p + p^2 \Zp}$ in $g_2(z) - g_1(z)$ is
        \begin{eqnarray}\label{jth summand in g2 - g1 in leq}
          && {n \choose j}
             \sum_{i \in I}p^x \lambda_i  \Big[(a + \alpha p - i)^{n - j}\logL(a + \alpha p - i) - (a - i)^{n - j}\logL(a - i) \\
            && - {n - j \choose 1}(\alpha p)(a - i)^{n - j - 1}\logL(a - i) - \cdots - {n - j \choose n - j - 1}(\alpha p)^{n - j - 1}(a - i)\logL(a -  i)\Big]. \nonumber
        \end{eqnarray}
        By Lemma~\ref{Integers in the lattice}, to prove the claim it suffices to show
        that this coefficient is congruent to $0$ modulo $p^{r/2 - j}\pi$. This can be done but we omit the details.

        This shows that $g(z) \equiv g_1(z) \!\! \mod \pi \latticeL{k}$. Next, we simplify $g_1(z)$. Recall that
        \begin{eqnarray}\label{g1 in leq}
            g_1(z) = \sum_{a = 0}^{p - 1}\left[\sum_{j = 0}^{n - 1}\frac{g^{(j)}(a)}{j!}(z - a)^j\right]\mathbbm{1}_{a + p\Zp},
        \end{eqnarray}
        where
        \begin{eqnarray}\label{jth summand in g1 in leq}
            \frac{g^{j}(a)}{j!} = \sum_{i \in I}{n \choose j}p^x\lambda_i(a - i)^{n - j}\logL(a - i).
        \end{eqnarray}
        First assume that $a \neq 0$. The $i \neq 0, a, p$ summands on the right side of equation \eqref{jth summand in g1 in leq} are congruent to $0$ modulo $\pi$ since $a \not \equiv i \!\! \mod p$ and $p \mid \lambda_i$ for $i \neq 0, p$. The $i = a$ term is equal to $0$. The sum of the $i = 0$ and $i = p$ terms
        in \eqref{jth summand in g1 in leq}
        is congruent to 
        \[
            {n \choose j}p^x\left[a^{n - j}\logL(a) - (a - p)^{n - j}\logL(a - p)\right] \mod \pi
          \]
        since, by \eqref{Identities concerning the coefficients in leq}, $\lambda_0 = 1 \!\! \mod p$
        and $\lambda_p = -1$.  
        Expanding $(a - p)^{n - j}$ and dropping the terms divisible by $p$ since $x \geq -1$ and $p \> \vert \> \logL(a - p)$, this equals
        \[
            {n \choose j}p^x(-a^{n - j})\logL(1 - a^{-1}p) \mod \pi.
        \]
        Expanding $\logL(1 - a^{-1}p)$ using the usual Taylor series and dropping the terms that are congruent to $0$ mod $\pi$, we see that the sum of the $i = 0$ and $p$ summands in \eqref{jth summand in g1 in leq} is
        \[
            {n \choose j}p^{1 + x}a^{n - j - 1} \mod \pi.
        \]
        Therefore for $a \neq 0$, we get
        \begin{eqnarray}\label{jth term in a neq 0 in g1 in leq}
            \frac{g^{(j)}(a)}{j!} \equiv {n \choose j}p^{1 + x}a^{n - j - 1} \mod \pi.
        \end{eqnarray}

        Next, assume that $a = 0$. Then again the $i \neq 0, \> p$ summands in
        \eqref{jth summand in g1 in leq} are congruent to $0$ modulo $\pi$ by the same reasoning as in the $a \neq 0$ case above. The $i = 0$ summand is $0$. The $i = p$ summand is
        \[
            -{n \choose j}p^x(-p)^{n - j}\cL.
        \]
        Therefore for $a = 0$, we have
        \begin{eqnarray}\label{jth term in a = 0 in g1 in leq}
            \frac{g^{(j)}(0)}{j!} \equiv (-1)^{n - j + 1}{n \choose j}p^{x + n - j}\cL \mod \pi.
        \end{eqnarray}
        Putting equations \eqref{jth term in a neq 0 in g1 in leq} and \eqref{jth term in a = 0 in g1 in leq} in equation \eqref{g1 in leq} and using Lemma \ref{Integers in the lattice}, we get
        \begin{eqnarray*}
            g_1(z) & \equiv & \sum_{a = 1}^{p - 1}\left[\sum_{j = 0}^{n - 1}{n \choose j}p^{1 + x}a^{n - j - 1}(z - a)^j\right]\mathbbm{1}_{a + p\Zp} + \left[\sum_{j = 0}^{n - 1}(-1)^{n - j + 1}{n \choose j}p^{x + n - j}\cL z^j\right]\mathbbm{1}_{p\Zp} \\
            & \equiv & \sum_{a = 1}^{p - 1}p^{1 + x}a^{-1}\left[z^n - (z - a)^n\right]\mathbbm{1}_{a + p\Zp} + \left[\sum_{j = 0}^{n - 1}(-1)^{n - j + 1}{n \choose j}p^{x + n - j}\cL z^j\right]\mathbbm{1}_{p\Zp} \\
            && \qquad \qquad \mod \pi \latticeL{k}.
        \end{eqnarray*}
        Using Lemma~\ref{stronger bound for polynomials of large degree with varying radii} for the first sum and Lemma \ref{Integers in the lattice} for the second sum above, we get
        \[
            g_1(z) \equiv \sum_{a = 1}^{p - 1}p^{1 + x}a^{-1}z^n\mathbbm{1}_{a + p\Zp} + \left[\sum_{j = \lceil r/2 \rceil}^{n - 1}(-1)^{n - j + 1}{n \choose j}p^{x + n - j}\cL z^j\right]\mathbbm{1}_{p\Zp} \mod \pi \latticeL{k}.
        \]
        This proves the proposition since $g(z) \equiv g_1(z) \!\! \mod \pi \latticeL{k}.$
    \end{proof}
    
    \begin{theorem}{\cite[Theorem 9.7]{CG23}} \label{Final theorem for leq}
        Let $i = 1, \> 2, \> \ldots, \> \lceil r/2 \rceil - 1$. If $\nu = i - r/2$, then the map $\IZind a^i d^{r - i} \twoheadrightarrow F_{2i, \> 2i + 1}$ factors as
        \[
            \IZind a^i d^{r - i} \twoheadrightarrow \frac{\IZind a^i d^{r - i}}{\im(T_{1, 2} - \lambda_i)} \twoheadrightarrow F_{2i, \> 2i + 1},
        \]
        where $$\lambda_i = (-1)^i i {r - i + 1 \choose i}p^{r/2 - i}\cL.$$ Moreover, 
        the second map in the display above induces a surjection 
    \[
        \pi(r - 2i, \lambda_i, \omega^i) \twoheadrightarrow F_{2i, \> 2i + 1}.
    \]
    \end{theorem}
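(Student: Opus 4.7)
The plan is to mirror the structure of the proof of Theorem~\ref{Final theorem for geq} (Theorem 9.5) for the neighbouring subquotient $F_{2i-2,\,2i-1}$, but with the roles of the two Iwahori-Hecke operators $T_{-1,0}$ and $T_{1,2}$ interchanged. As a first step I would apply Proposition~\ref{nu leq} with the parameter choice $n = r-i+1$ and $x = -1$; at the boundary value $\nu = i - r/2$ the hypotheses $x \geq -1$ and $x + \nu = r/2 - n$ are satisfied. Since the corresponding poly$\cdot$log function $g$ lies in $L(k,\sL)$ and therefore vanishes in $\tB(k,\sL)$, the proposition produces the congruence
\[
\sum_{a=1}^{p-1} a^{-1}\, z^{r-i+1} \mathbbm{1}_{a+p\Zp} \;+\; \sum_{j=\lceil r/2\rceil}^{r-i} (-1)^{r-i-j}\binom{r-i+1}{j} p^{r-i-j}\sL\, z^{j}\mathbbm{1}_{p\Zp} \;\equiv\; 0 \mod \pi\latticeL{k}.
\]
Working in the quotient of $\br{\latticeL{k}}$ by the image of $\IZind \bigoplus_{m > i}\Fq X^{m}Y^{r-m}$ (which is needed to isolate $F_{2i,2i+1}$ inside the filtration, since $\lceil r/2\rceil > i$), the entire second sum is already killed and only the first sum survives.

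Next I would use the inductive relations from the analogue of Proposition~\ref{inductive steps}: at $\nu = i - r/2$ all the strict inequalities $\nu > 1 + r/2 - n$ for $n = r, r-1, \ldots, r-i+2$ are in force, so each of these gives an extra congruence between the functions $(z-a)^{l}\mathbbm{1}_{a+p\Zp}$ and the monomials $z^{r-n}\mathbbm{1}_{p\Zp}$. Exactly as in the proof of Theorem~\ref{Final theorem for geq}, a Cramer's-rule-style linear-algebra computation then solves an $i\times i$ system and combines the surviving congruence with these inductive relations. The outcome should express the residue in the form
\[
\sum_{\lambda \in [\Fp]} p^{r/2-i}(z+p\lambda)^{i}\mathbbm{1}_{-p\lambda + p^2\Zp} \;-\; \lambda_i\, z^{i}\mathbbm{1}_{p\Zp} \;\equiv\; 0,
\]
with $\lambda_i = (-1)^{i}\, i\binom{r-i+1}{i}p^{r/2-i}\sL$, and the left-hand side is, by a direct computation using formula \eqref{def of Iwahori Hecke ops} and the $G$-action \eqref{G-action}, precisely the image of $(T_{1,2} - \lambda_i)\llbracket\id, X^{i}Y^{r-i}\rrbracket$ under the uniformization map. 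This yields the factorization $\IZind a^{i}d^{r-i}\twoheadrightarrow \IZind a^{i}d^{r-i}/\im(T_{1,2}-\lambda_i)\twoheadrightarrow F_{2i,2i+1}$.

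For the refined factorization through $\pi(r-2i,\lambda_i,\omega^{i})$, the key observation is that $1 \leq r-2i \leq p-3$, so $r-2i \not\in\{0,p-1\}$ and the commutative Iwahori-Hecke algebra of Section~2.2 applies, giving $\ker T_{1,2} = \im T_{-1,0}$ (and vice versa) by \cite[Proposition~3.1]{AB15}. I would then set up the rectangular diagram
\[
\begin{tikzcd}
0 \arrow[r] & \im T_{1,2} \arrow[r]\arrow[d, two heads] & \IZind a^{i}d^{r-i} \arrow[r]\arrow[d, two heads] & \dfrac{\IZind a^{i}d^{r-i}}{\im T_{1,2}} \arrow[r]\arrow[d] & 0 \\
0 \arrow[r] & S \arrow[r] & F_{2i,2i+1} \arrow[r] & Q \arrow[r] & 0,
\end{tikzcd}
\]
where $S$ and $Q$ are the induced sub and quotient. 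Since $\lambda_i$ is a unit (as $v_p(\lambda_i) = r/2 - i + \nu = 0$) one may write $y \equiv \lambda_i^{-1}T_{1,2}(y)\bmod \im(T_{1,2}-\lambda_i)$ for every $y$, so $\im(T_{1,2}-\lambda_i)$ surjects onto $\IZind/\im T_{1,2}$; the already-established vanishing of $\im(T_{1,2}-\lambda_i)$ in $F_{2i,2i+1}$ forces $Q=0$, i.e.\ $\im T_{1,2} \twoheadrightarrow F_{2i,2i+1}$. Composing with $\im T_{1,2} \simeq \IZind a^{i}d^{r-i}/\im T_{-1,0}$ and combining with the first factorization gives the desired surjection from $\IZind a^{i}d^{r-i}/\bigl(\im T_{-1,0}+\im(T_{1,2}-\lambda_i)\bigr) \simeq \pi(r-2i,\lambda_i,\omega^{i})$, where the last isomorphism is the symmetric/twisted presentation of $\pi(r-2i,\lambda_i,\omega^{i})$ valid in the commutative range.

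The main obstacle will be the middle step: namely, explicitly matching the simplified congruence coming from Proposition~\ref{nu leq} with the image of $(T_{1,2}-\lambda_i)\llbracket\id, X^{i}Y^{r-i}\rrbracket$. The supports $a+p\Zp$ (from Proposition~\ref{nu leq}) and $-p\lambda + p^{2}\Zp$ (from the coset representatives defining $T_{1,2}$) sit in different pieces of the Bruhat decomposition, so the identification is not direct; it must be routed through a carefully chosen linear combination of the $i-1$ inductive relations, whose coefficients arise as the unique solution of an $i\times i$ Vandermonde-type system. The emergence of the exact constant $\lambda_i=(-1)^{i}\,i\binom{r-i+1}{i}p^{r/2-i}\sL$ is precisely what this Cramer's-rule computation is designed to deliver, and this is where the bulk of the technical effort lies.
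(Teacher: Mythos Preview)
Your overall architecture is right and matches the paper: apply Proposition~\ref{nu leq} at $n=r-i+1$, feed in the shallow inductive steps for $n=r,\ldots,r-i+2$, solve a Cramer's-rule system to reach the image of $(T_{1,2}-\lambda_i)\llbracket\id,X^iY^{r-i}\rrbracket$, and then run the $\im T_{1,2}$ rectangle to pass to $\pi(r-2i,\lambda_i,\omega^i)$. The final diagram chase is exactly what the paper does.

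However, there is a genuine gap in your first step. The claim that ``the entire second sum is already killed'' is wrong, and this is fatal: the top term $j=n-1=r-i$ of that sum is precisely where $\cL$, and hence $\lambda_i$, comes from. Your reasoning ``$j>i$, so $z^j\mathbbm{1}_{p\Zp}$ lies in the slice we are quotienting by'' only works when the coefficient is a $p$-adic integer. Here the coefficient of $z^{r-i}\mathbbm{1}_{p\Zp}$ is $(r-i+1)\cL$, which at $\nu=i-r/2$ has valuation $i-r/2<0$. To see this term as a lattice element you must use Lemma~\ref{stronger bound for polynomials of large degree with varying radii}, which rewrites $p^{\,i-r/2}z^{r-i}\mathbbm{1}_{p\Zp}=-\beta\cdot z^i\mathbbm{1}_{p\Zp}$; thus $(r-i+1)\cL\, z^{r-i}\mathbbm{1}_{p\Zp}$ is a \emph{unit} multiple of $\beta\cdot z^i\mathbbm{1}_{p\Zp}$, which lands in the $a^id^{r-i}$ slice, not in the $X^{>i}$ slices you are quotienting by. (The lower terms $\lceil r/2\rceil\le j<r-i$ \emph{are} killed by this mechanism, since there the rescaled element is $-\beta\cdot z^{r-j}\mathbbm{1}_{p\Zp}$ with $r-j>i$.) If you drop the $j=r-i$ term, neither the surviving first sum nor the inductive relations contain $\cL$, so you could never recover $\lambda_i$.

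Relatedly, you are missing the step that bridges the two support types: the paper applies $\beta$ to the congruence \eqref{Main equation in leq}. This simultaneously converts the first sum into terms supported on $ap+p^2\Zp$ (matching the cosets in the definition of $T_{1,2}$) and turns the surviving $(r-i+1)\cL\, z^{r-i}\mathbbm{1}_{p\Zp}$ into $-(r-i+1)p^{r/2-i}\cL\, z^i\mathbbm{1}_{p\Zp}$. Only after this does the Cramer's-rule computation produce the correct scalar $x_{i+1}$ with $x_{i+1}(r-i+1)=(-1)^i i\binom{r-i+1}{i}$, yielding $\lambda_i$. Your description of the support mismatch is accurate, but routing through the inductive relations alone cannot effect this change of radius; the group action via $\beta$ is essential.
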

    \begin{proof}
        All congruences in this proof are in the space $\br{\latticeL{k}}$ modulo the image of the subspace $\IZind \oplus_{j < r - i}\Fq X^{r - j}Y^j$ under $\IZind \SymF{k - 2} \twoheadrightarrow \br{\latticeL{k}}$.

        Fix an $i = 1, 2, \ldots, \lceil r/2 \rceil - 1$. Applying Proposition~\ref{nu leq} with $n = r - i + 1$ and the remark above, we get
        \begin{eqnarray}\label{Main equation in leq}
            0 \equiv p^{1 + x}\sum_{a = 1}^{p - 1}a^{-1}z^{r - i + 1}\mathbbm{1}_{a + p\Zp} + (r - i + 1)p^{x + 1}\cL z^{r - i}\mathbbm{1}_{p\Zp}.
        \end{eqnarray}

 \noindent Since $\nu = i - r/2$, we see that $x = -1$.
\noindent After much massaging of this equation using some inductive steps and some matrix computations (several
pages of work in \cite{CG23}), we get
        \[
            0 \equiv \sum_{a = 0}^{p - 1}p^{r/2 - i}(z - ap)^i\mathbbm{1}_{ap + p^2\Zp} - (-1)^{i}i {r - i + 1 \choose i}p^{r/2 - i}\cL z^i\mathbbm{1}_{p\Zp}.
        \]

        This equation is the image of $(T_{1, 2} - \lambda_i)\llbracket \id, X^iY^{r - i}\rrbracket$ under $\IZind a^i d^{r - i} \twoheadrightarrow F_{2i, \> 2i + 1}$. Indeed, 
        \[
            T_{1, 2}\llbracket \id, X^iY^{r - i}\rrbracket = \sum_{\lambda \in I_1}\left\llbracket \begin{pmatrix}1 & 0 \\ -p\lambda & p\end{pmatrix}, X^iY^{r - i}\right\rrbracket
            \mapsto \sum_{\lambda \in I_1}p^{r/2 - i}(z - \lambda p)^i\mathbbm{1}_{\lambda p + p^2 \Zp}.
        \]
        Therefore we have proved that the surjection $\IZind a^{i}d^{r - i} \twoheadrightarrow F_{2i, \> 2i + 1}$ factors as
        \[
            \IZind a^{i}d^{r - i} \twoheadrightarrow \frac{\IZind a^{i}d^{r - i}}{\im(T_{1, 2} - \lambda_i^{})} \twoheadrightarrow F_{2i, \> 2i + 1}.
        \]

        Now consider the following exact sequences
            \[
                \begin{tikzcd}
                    0 \arrow[r] & \im T_{1, 2} \arrow[r] \arrow[d, two heads] & \IZind a^{i}d^{r - i} \arrow[r]\arrow[d, two heads] & \dfrac{\IZind a^{i}d^{r - i}}{\im T_{1, 2}} \arrow[r]\arrow[d, two heads] & 0 \\
                    0 \arrow[r] & S \arrow[r] & F_{2i, \> 2i + 1} \arrow[r] & Q \arrow[r] & 0,
                \end{tikzcd}
            \]
            where $S$ is the image of $\im T_{1, 2}$ under the surjection $\IZind a^id^{r - i} \twoheadrightarrow F_{2i, \> 2i + 1}$ and $Q = F_{2i, \> 2i + 1}/S$. Since $\im (T_{1, 2} - \lambda_i^{})$ maps to $0$ under the middle vertical map in the diagram above, we see that the right vertical map is $0$. Therefore we have a surjection $\im T_{1, 2} \twoheadrightarrow F_{2i, \> 2i + 1}$.
            By the first isomorphism theorem and \cite[Proposition 3.1]{AB15}, we get a surjection
            \[
                \frac{\IZind a^{i}d^{r - i}}{\im T_{-1, 0}} \twoheadrightarrow F_{2i, \> 2i + 1},
            \]
            which factors through
            \[
              \frac{\IZind a^{i}d^{r - i}}{\im T_{-1, 0} + \im (T_{1, 2} - \lambda_i^{})}
              \twoheadrightarrow F_{2i, \> 2i + 1}.
            \]
            The space on the left is  
            \[
                \frac{\IZind a^{i}d^{r - i}}{\im T_{-1, 0} + \im (T_{1, 2} - \lambda_i)} = \pi(r - 2i, \lambda_i, \omega^{i}).
            \]
            This completes the sketch of the proof of the theorem.
    \end{proof}

        \subsection{Reduction mod $p$ of $\latticeL{k}$}
        \label{Section containing the proof of the main theorem}
    In this section, we summarize all the results proved in \cite{CG23} (such as Theorem~\ref{Final theorem for leq} above)
    and mention how they are used to prove Theorem \ref{Main theorem in the second part of my thesis}. 
    Recall that $$\nu = v_p(\cL - H_{-} - H_{+}).$$
\begin{theorem}{\cite[Theorem 12.1]{CG23}}\label{Main final theorem in the second part of my thesis}
    For $3 \leq k \leq p + 1$ and $p \geq 5$, the semi-simplification of the reduction $\br{V}_{k,\sL}$ of
    the semi-stable representation $V_{k,\sL}$ of $G_{\Qp}$ of Hodge-Tate weights $(0,k-1)$ and $\sL$-invariant $\sL$
    satisfies:
    \[
    \br{V}_{k, \cL} \sim
    \begin{cases}
        \ind (\omega_2^{r+1 + (i-1)(p-1)}), & \text{ if $(i-1) - r/2 < \nu < i -r/2$} \\
        \mu_{\lambda_i}\omega^{r+1-i} \oplus \mu_{\lambda_i^{-1}}\omega^{i}, & \text{ if $\nu = i -r/2$},
    \end{cases}
    \]
    where $1 \leq i \leq \dfrac{r+1}{2}$ if $r$ is odd and $1 \leq i \leq \dfrac{r+2}{2}$ if $r$ is even. The constants $\lambda_i$ are determined by
    \begin{eqnarray*}
        \lambda_i & = & \br{(-1)^i \> i {r+1-i \choose i}p^{r/2-i}(\cL - H_{-} - H_{+})}, \quad \text{ if } 1 \leq i < \dfrac{r + 1}{2} \\
        \lambda_{i} + \lambda_i^{-1} & = & \br{(-1)^i \> i{r + 1 - i \choose i}p^{r/2 - i}(\cL - H_{-} - H_{+})}, \quad \text{ if } i = \dfrac{r + 1}{2} \text{ and } r \text{ is odd}.
    \end{eqnarray*}
    We follow the conventions stated in the Introduction.
\end{theorem}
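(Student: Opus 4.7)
The plan is to realize this theorem as the conclusion of the commutative diagram from the introduction: compute the reduction $\br{\Theta_{k,\sL}}$ of the standard lattice $\Theta_{k,\sL} \subset B_{k,\sL}$, and then transport that information across the Iwahori mod $p$ LLC (Theorem~\ref{Iwahori mod p LLC}), which is injective on the image of the left vertical arrow $V_{k,\sL} \mapsto \br{V}_{k,\sL}$. Thus the problem decomposes as (i) identify which Jordan--H\"older factors of $\IZind \SymF{k-2}$ survive the surjection \eqref{Main surjection on mod p representations} $\IZind \SymF{k-2} \twoheadrightarrow \br{\Theta_{k,\sL}}$, and (ii) read off $\br{V}_{k,\sL}$ via the dictionary between the characters $a^i d^{r-i}$ and the mod $p$ representations $\pi(r-2i,\lambda,\eta)$.

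For step (i) I will analyze the successive subquotients $F_{2l,2l+1}$ of $\br{\Theta_{k,\sL}}$ corresponding to the filtration on $\IZind \SymF{k-2}$ by $IZ$-characters. The mechanism for each $F_{2l,2l+1}$ is the poly$\cdot$log machinery of Section~4 (already illustrated in Proposition~\ref{nu leq} and Theorem~\ref{Final theorem for leq}): for each marked value $\nu = i - r/2$ one chooses a specific finite sum
\[
g(z) = p^{x}\sum_{j \in I}\lambda_j (z - z_j)^n \logL(z - z_j)
\]
which lies in $L(k,\sL)$ (hence equals zero in $B_{k,\sL}$) with $n$ and $x$ calibrated to the subquotient under study. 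Writing $g = g\mathbbm{1}_{\zp} + g\mathbbm{1}_{\qp\setminus\zp}$, Lemma~\ref{qp-zp part is 0} kills the tail at $\infty$, while the Mahler/wavelet decomposition (Proposition~\ref{prop loc poly}, Theorem~\ref{theorem bb}) together with Lemma~\ref{Congruence lemma}, Lemma~\ref{Truncated Taylor expansion} and the telescoping Lemma~\ref{telescoping lemma} collapse $g\mathbbm{1}_{\zp}$ down to $g_2$ (occasionally $g_3$) modulo $\pi\Theta_{k,\sL}$. The resulting congruence, once polished via radius-reduction and an inductive step analogous to Proposition~\ref{inductive steps}, is exactly the image of a linear combination of Iwahori--Hecke operators applied to a generator $\llbracket \id, X^iY^{r-i}\rrbracket$; comparing with the cases already worked out in Section~9 of \cite{CG23} gives for $\nu = i - r/2$ that $F_{2l,2l+1}$ is a quotient of $\IZind a^l d^{r-l}/\mathrm{im}(T_{1,2} - \lambda_i)$ when $l = i$ and of $\IZind a^{l}d^{r-l}/\mathrm{im}(\lambda_{l+1}T_{-1,0} - 1)$ (or its non-commutative variant for $(l,r) = (0,p-1)$) when $l < i$, and vanishes for $l > i$ by the dual case. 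The explicit coefficients, after substituting $x + \nu = r/2 - n$ and matching the binomial factors from Lemma~\ref{Main coefficient identites}, yield $\lambda_i = \overline{(-1)^i i\binom{r+1-i}{i}(\cL - H_- - H_+)/p^{i-r/2}}$; the shift by $H_- + H_+$ arises from integrality adjustments to the chosen branch $\logL$ when $\lambda_j$ runs over the Teichm\"uller lifts rather than integer shifts.

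Step (ii) is then essentially automatic: in each open interval $(i-1-r/2,\,i-r/2)$ one shows that \emph{all} subquotients $F_{2l,2l+1}$ with $l \neq $ (the critical level) die, so the unique surviving quotient of $\IZind a^{i-1}d^{r-i+1}$ is $\pi([2i-2-r], \lambda_i^{-1}, \omega^{r-i+1})$ with $\lambda_i = 0$ in the quotient, i.e. the $\lambda = 0$ branch of Theorem~\ref{Iwahori mod p LLC}, producing the irreducible induced $\ind(\omega_2^{r+1+(i-1)(p-1)})$. At the boundary points $\nu = i - r/2$ exactly two adjacent subquotients contribute and they assemble into the sum $\pi(r-2i,\lambda_i,\omega^i) \oplus \pi([p-3-(r-2i)],\lambda_i^{-1},\omega^i\omega^{r-2i+1})^{\rmss}$, whose Galois counterpart under the Iwahori mod $p$ LLC is the reducible representation $\mu_{\lambda_i}\omega^{r+1-i} \oplus \mu_{\lambda_i^{-1}}\omega^{i}$ of the theorem.

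The hard parts, which consume the bulk of \cite{CG23} and are only hinted at in the excerpt, are the two endpoint phenomena. For $r$ odd and $i = (r+1)/2$ (the self-dual case, $\nu = 1/2$), the two Hecke-theoretic conditions coming from the two neighbouring slices no longer separate; one must run a deeper inductive argument (using $g_3$ rather than $g_2$) and extract a \emph{quadratic} relation $T_{-1,0}^2 - cT_{-1,0} + 1$ whose roots $\lambda_i, \lambda_i^{-1}$ satisfy the self-dual formula $\lambda_i + \lambda_i^{-1} = \overline{(-1)^i i\binom{r+1-i}{i}(\cL-H_--H_+)/p^{i-r/2}}$; producing the correct constant $c$ is the single most delicate computation. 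For $r$ even and $i = r/2+1$ (so $\nu = 0$), the relevant character is $d^0 = 1$ or $d^{p-1}$, and one is forced into the non-commutative Iwahori--Hecke algebra of Section~2.2, using the ABC theorem to rewrite $\IZind 1$ via the idempotents $\pm T_{1,2}T_{1,0}$ of \eqref{technical}; one again has to be careful because the operator $T_{1,0}$ intervenes additively in the relation defining the quotient, and for $k = p+1$ both the $\nu = 1 - r/2$ and $\nu = 0$ boundary points see this complication (compare Theorem~\ref{Final theorem for leq}, case $(i,r) = (1,p-1)$). Having handled these two exceptional cases, all other $\nu$ are covered uniformly by the template above, and collating the results over all $l$ yields the full statement.
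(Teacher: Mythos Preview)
Your proposal is correct and follows essentially the same approach as the paper: analyze the subquotients $F_{2l,2l+1}$ of $\br{\Theta_{k,\sL}}$ via carefully chosen poly$\cdot$log functions and the telescoping machinery, then invoke the Iwahori mod $p$ LLC, with separate treatment of the self-dual ($r$ odd, quadratic Hecke relation) and non-commutative ($r$ even) endpoint cases. One minor inaccuracy: the shift by $H_- + H_+$ does not come from the choice of Teichm\"uller versus integer centers $z_i$, but rather from the derivative formula \eqref{Small derivative formula for polynomial times logs} for $z^n\logL(z)$, whose harmonic-sum correction terms only become visible (i.e., are not absorbed into $\pi\Theta_{k,\sL}$) precisely near the final marked point.
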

\begin{proof}
  We collect the necessary results proved in \cite{CG23} here.
  We first state the common results for odd and even weights (the fourth of which was
  sketched in 
  Theorem \ref{Final theorem for leq}):
    \begin{enumerate}
        \item For $i = 1, 2, \ldots, \lceil r/2 \rceil - 1$ if $\nu > i - r/2$, then $F_{2i - 2, \> 2i - 1} = 0$.
        \item For $i = 1, 2, \ldots, \lceil r/2 \rceil - 1$ if $\nu = i - r/2$, then $\pi([2i - 2 - r], \lambda_i^{-1}, \omega^{r - i + 1}) \twoheadrightarrow F_{2i - 2, \> 2i - 1}$.
        \item For $i = 1, 2, \ldots, \lceil r/2 \rceil - 1$ if $\nu < i - r/2$, then $F_{2i, \> 2i + 1} = 0$.
        \item For $i = 1, 2, \ldots, \lceil r/2 \rceil - 1$ if $\nu = i - r/2$, then $\pi(r - 2i, \lambda_i, \omega^{i}) \twoheadrightarrow F_{2i, \> 2i + 1}$.
    \end{enumerate}
    Next, we state the extra results for odd weights around the point $\nu = \frac{1}{2}$.
    \begin{enumerate}
        \item[5.] If $\nu \geq 0.5$, then $\pi(p - 2, \lambda_{\frac{r + 1}{2}}, \omega^{\frac{r + 1}{2}}) \oplus \pi(p - 2, \lambda_{\frac{r + 1}{2}}^{-1}, \omega^{\frac{r + 1}{2}}) \twoheadrightarrow F_{r - 1, \> r}$.
        \item[6.] If $-0.5 < \nu < 0.5$, then
          $\dfrac{\IZind a^{\frac{r - 1}{2}}d^{\frac{r + 1}{2}}}{\im T_{-1, 0}} \twoheadrightarrow F_{r-1, \> r}$.
          
    \end{enumerate}
    Finally, we state the extra results for even weights around the point $\nu = 0$.
    \begin{enumerate}
        \item[5.] If $\nu > 0$, then $F_{r - 2, \> r - 1} = 0$.
        \item[6.] If $\nu = 0$, then $\pi(p - 3, \lambda_{r/2}^{-1}, \omega^{\frac{r + 2}{2}}) \twoheadrightarrow F_{r - 2, \> r - 1}$.
        \item[7.] If $\nu < 0$, then $F_{r, \> r + 1} = 0$.
        \item[8.] If $\nu = 0$, then $\pi(p - 1, \lambda_{r/2}, \omega^{r/2}) \twoheadrightarrow F_{r}$ and $F_{r + 1}$ is not isomorphic to $\pi(0, 0, \omega^{r/2})$.
    \end{enumerate}


    
    The proof of the theorem is then a standard application of the
  compatibility with respect to mod $p$ reduction between the $p$-adic and the
  Iwahori mod $p$ LLC (stated in Theorem~\ref{Iwahori mod p LLC}).
\end{proof}

\vspace{0.3cm}

{\noindent \bf Acknowledgements}:   This article is based on notes of lectures in a mini-course I gave at IIT Jammu 
for a National Center for Mathematics workshop on the $p$-adic Local Langlands program in the semi-stable case 
in September 2024. I wish to thank A. Chitrao and A. Jana for several useful discussions during the preparation of 
these notes.  This article was 
solicited by IJPAM for a special volume of articles written by Fellows of the Indian National Science Academy to 
celebrate the 90th year of the academy. It was written up mostly during a visit to Japan in October 2025 and 
I thank S. Kobayashi, M. Kurihara and S. Yasuda for their hospitality.

\vspace{.2cm}


\end{document}